\documentclass[11pt]{article}
\pdfoutput=1

\usepackage[utf8]{inputenc}
\usepackage[T1]{fontenc}
\usepackage[letterpaper,margin=1in]{geometry}
\usepackage{microtype}
\usepackage[usenames,dvipsnames]{xcolor}
\usepackage{amsmath,amssymb,amsthm,mathtools}
\usepackage{enumitem}
\usepackage[numbers]{natbib}
\usepackage{hyperref}
\usepackage[nameinlink,noabbrev]{cleveref}

\hypersetup{
  pdftitle={A Proof of the Matrix Spencer Conjecture},
  pdfauthor={Emrullah Akbas, Suvrit Sra},
  pdfsubject={},
  pdfkeywords={matrix Spencer, discrepancy, small ball, Schur complement, partial coloring}
}

\usepackage{ss}

\setlist[itemize]{topsep=0.35em,itemsep=0.2em,leftmargin=2em}
\setlist[enumerate]{topsep=0.35em,itemsep=0.2em,leftmargin=2.2em}

\newcommand{\R}{\mathbb R}
\newcommand{\E}{\mathbb E}
\newcommand{\Prob}{\mathbb P}
\newcommand{\Tr}{\operatorname{Tr}}

\newcommand{\Sym}{\operatorname{Sym}}
\newcommand{\Cov}{\operatorname{Cov}}
\newcommand{\Var}{\operatorname{Var}}
\newcommand{\diag}{\operatorname{diag}}

\newcommand{\vecop}{\operatorname{vec}}

\newcommand{\one}{\mathbf 1}
\newcommand{\nlsum}{\sum\nolimits}
\newcommand{\nlprod}{\prod\nolimits}
\newcommand{\opnorm}[2][]{#1\lVert #2#1\rVert_{\mathrm{op}}}
\newcommand{\hsnorm}[2][]{#1\lVert #2#1\rVert_{F}}
\newcommand{\norm}[2][]{#1\lVert #2#1\rVert}
\newcommand{\ip}[3][]{#1\langle #2,#3#1\rangle}

\newtheorem{theorem}{Theorem}[section]
\newtheorem{proposition}[theorem]{Proposition}
\newtheorem{lemma}[theorem]{Lemma}

\theoremstyle{definition}

\numberwithin{equation}{section}

\title{A Proof of the Matrix Spencer Conjecture}
\author{%
\name Emrullah Akbas \email{emrullah.akbas@tum.de}\\
\addr Technical University of Munich, Garching, Germany\\[2pt]
\name Suvrit Sra \email{s.sra@tum.de}\\
\addr Technical University of Munich, Garching, Germany\\[2pt]
}

\begin{document}
\maketitle

\begin{abstract}
  We develop a novel approach to matrix discrepancy based on matrix small-ball estimates. Specifically, we use a determinantal weight (obtained from the log-barrier) to scale the small-ball probability into a partition function of a tilt of the Gaussian measure. We then employ matrix-weighted Poincar\'e inequalities to compare this partition function to that of a pinched or diagonal part of the matrix, obtaining \emph{dimension-free} constants. Our technique yields a hereditary small-ball estimate for Gaussian series that should be of independent interest. As the main application, we resolve the Matrix Spencer conjecture: for symmetric $n\times n$ matrices $A_1,\dots,A_n$ with $\opnorm{A_i}\le1$, one can efficiently find a coloring $x\in\{\pm1\}^n$ with $\opnorm[\big]{\nlsum_{i=1}^n x_iA_i}=O(\sqrt n)$.
\end{abstract}

\tableofcontents 

\section{Introduction}
In this work we study discrepancy minimization in the matrix setting. Recall the classical discrepancy setting where we are given vectors $a_1,\dots,a_n\in\R^d$ satisfying $\norm{a_i}_\infty\le1$. The goal is to assign a coloring $x_1,\dots, x_n\in\{\pm1\}$ to vectors to minimize the discrepancy $\norm[\big]{\nlsum_{i=1}^nx_ia_i}_\infty$. Choosing the coloring at random, a union bound argument shows that $\norm[\big]{\sum_{i=1}^nx_ia_i}_\infty\le O(\sqrt{n\log d})$. However, in a seminal result \citet{spencer1985} showed that the bound obtained via a random coloring can be beaten, which is in sharp contrast to other applications of the union bound in combinatorics where the probabilistic method is optimal. 
\begin{theorem}[Spencer \cite{spencer1985}]
    For all vectors $a_1,\dots,a_n\in\R^d$ satisfying, $\norm{a_i}_\infty\le1$, there exists $x_1,\dots, x_n\in\{\pm1\}$ such that $\norm[\big]{\nlsum_{i=1}^nx_ia_i}_\infty\le O(\sqrt{n}\max\{1,\sqrt{\log(d/n)}\})$.
\end{theorem}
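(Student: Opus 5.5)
The plan is the classical partial coloring (entropy) method combined with iteration. By padding with zero vectors or splitting coordinates we may assume $d\ge n$. I would first prove a \emph{partial coloring lemma}: for any $m\le n$ vectors $a_1,\dots,a_m\in\R^d$ with $\norm{a_i}_\infty\le1$, there is $y\in\{-1,0,1\}^m$ with at least $m/2$ nonzero entries such that $\norm{\sum_i y_ia_i}_\infty\le C\sqrt{m}\max\{1,\sqrt{\log(d/m)}\}$. Given this lemma, we color half of the remaining coordinates at each round, discard them, and recurse on the rest. The total discrepancy is then at most $\sum_{k\ge0} C\sqrt{n2^{-k}}\,\sqrt{\log(d2^k/n)+1}$. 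Since $\log(d2^k/n)\le \log(d/n)+k$, this sum is dominated by a geometric series and comes to $O(\sqrt n\max\{1,\sqrt{\log(d/n)}\})$. The $\sqrt{k}\,2^{-k/2}$ terms are summable, and this summability is exactly why the iteration costs only a constant factor.

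To prove the partial coloring lemma I would use Beck's entropy/pigeonhole argument. Set $\Delta=\lambda\sqrt m$ and consider the map $\Phi:\{\pm1\}^m\to\mathbb Z^d$ with $\Phi(x)_j=\operatorname{round}\bigl(\ip{x}{r_j}/(2\Delta)\bigr)$, where $r_j$ is the $j$-th row. For uniform $x$, each coordinate $\Phi(x)_j$ is a discrete random variable. By Hoeffding, $\Prob(|\ip{x}{r_j}|\ge t\Delta)\le 2e^{-t^2\lambda^2/2}$, which gives the entropy bound $H(\Phi_j)\le c\,e^{-\lambda^2/9}$ for $\lambda$ large, and $H(\Phi_j)=O(\log(1/\lambda)+1)$ in general. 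By subadditivity, $H(\Phi)\le\sum_j H(\Phi_j)$. Choosing $\lambda\asymp\max\{1,\sqrt{\log(d/m)}\}$ makes $d\cdot c e^{-\lambda^2/9}\le m/5$, so $H(\Phi)\le m/5$. Some fiber of $\Phi$ then has size at least $2^{m-m/5}=2^{4m/5}$.

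The combinatorial step uses Kleitman's theorem. A subset of the hypercube of size $2^{4m/5}$ has diameter at least $m/2$ in Hamming distance, with an appropriate constant choice (any set of diameter $<\epsilon m$ has size at most $2^{H(\epsilon/2)m}$). Take $x,x'$ in the same fiber at Hamming distance at least $m/2$ and set $y=(x-x')/2\in\{-1,0,1\}^m$, which has at least $m/2$ nonzeros. Since $\Phi(x)=\Phi(x')$, each row satisfies $|\ip{x-x'}{r_j}|\le 2\Delta$, hence $|\ip{y}{r_j}|\le\Delta$. This gives the lemma.

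The main obstacle is calibrating the entropy estimate in the regime $d\approx n$, where $\lambda$ must be a constant. There one has to check carefully that $H(\Phi_j)$ decays like $e^{-\Omega(\lambda^2)}$ with constants strong enough to push the total entropy below $m/5$, while keeping the Kleitman constant compatible. I would first fix $\lambda$ large as an absolute constant, then bound $H(\Phi_j)\le\sum_s p_s\log(1/p_s)$ using Hoeffding tails over the integers $s$. The $\log(d/m)$ regime then follows by increasing $\lambda$ by $\sqrt{\log(d/m)}$.
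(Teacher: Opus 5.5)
Note first that the paper does not prove this statement: it is quoted as background with a citation to Spencer. So there is no in-paper proof to match. Your argument is the standard Beck--Spencer entropy/pigeonhole proof, and it is correct up to one constant slip.

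\textbf{The slip.} A fiber of size $2^{4m/5}$ does not force Hamming diameter $m/2$. Kleitman's bound for sets of diameter below $m/2$ is about $2^{H(1/4)m}$, and $H(1/4)\approx 0.811>4/5$. The claim is in fact false: a Hamming ball of radius $0.245m$ has more than $2^{0.8m}$ points for large $m$, yet its diameter is below $m/2$. There are two easy repairs. You can push the total entropy below $m/10$, which only changes $\lambda$ by a constant factor. Alternatively, you can accept a colored fraction of about $0.48m$; the iteration works with any fixed fraction. Separately, when you replace $m_k$ by $n2^{-k}$ in the final sum, you are using that $m\mapsto m(1+\log(d/m))$ is increasing on $(0,d]$ (its derivative is $\log(d/m)\ge 0$). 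That is where the reduction to $d\ge n$ is actually needed, and it is worth stating.

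\textbf{Comparison with the paper's route.} The discrepancy argument the paper itself uses, in its proof of Matrix Spencer, is Gluskin's method. One lower-bounds the Gaussian measure of the discrepancy body $K=\{x:\lVert\sum_i x_ia_i\rVert_\infty\le\lambda\sqrt m\}$ by $2^{-\theta m}$ and feeds this to the Reis--Rothvo{\ss} partial-coloring theorem. The rounds are then iterated with summable radii, exactly as in your recursion. For vectors the measure bound is a single line via \v{S}id\'ak, namely $\gamma_m(K)\ge(1-2e^{-\lambda^2/2})^d$, and it yields the same choice $\lambda\asymp\sqrt{1+\log(d/m)}$.

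Your entropy argument is elementary and self-contained, but it is nonconstructive. It also relies on rounding each of the $d$ scalar constraints separately and using subadditivity of entropy. That step has no direct analogue for the spectral norm: it would require small nets for the operator-norm ball, which is essentially the quantum-relative-entropy-net approach of Dadush--Jiang--Reis. The Gaussian-measure route instead puts all the problem-specific work into a small-ball estimate. For vectors that estimate is trivial; for matrices it is the paper's hereditary small-ball theorem. This route also comes with a polynomial-time algorithm.
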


In this work we study the matrix discrepancy, a natural generalization of Spencer's setting to matrices~\citep{zouzias2012, meka2014}. Let $A_1,\dots,A_n$ be $d\times d$ symmetric matrices having spectral norms $\opnorm{A_i}\le1$. The goal of matrix discrepancy is to find a coloring $x_1,\dots, x_n$ to minimize the spectral norm $\opnorm[\big]{\sum_{i=1}^nx_iA_i}$. Similarly to the classical setting,  matrix concentration inequalities show that randomly choosing $x$ gives a signing of discrepancy
$\opnorm[\big]{\nlsum_{i=1}^nx_iA_i}\le O(\sqrt{n\log d})$ \cite{AhlswedeWinter2002}. Mirroring the classical setting, it is a natural question to ask, whether it is possible to improve upon the bound obtained via a random coloring.


\subsection{Summary of Results}
The main result of this paper is a proof of the Matrix Spencer conjecture~\citep{zouzias2012, meka2014}.

\begin{theorem}[Matrix Spencer]
\label{thm:matrix-spencer}
There exists a universal constant $C>0$ such that for every set of symmetric $n \times n$ matrices $A_1,\dots,A_n$ with $\opnorm{A_i}\le1$
\[
\min_{x\in\{\pm1\}^n}\opnorm[\Big]{\nlsum_{i=1}^nx_iA_i}
\le
C\sqrt n.
\]
Moreover, a coloring with discrepancy at most $C\sqrt{n}$ can be found efficiently.
\end{theorem}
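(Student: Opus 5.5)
The plan is the partial coloring framework. Theorem~\ref{thm:matrix-spencer} will follow from a \emph{partial coloring lemma}: for any $m\le n$ symmetric $n\times n$ matrices with $\opnorm{A_i}\le1$ and any starting point $y\in[-1,1]^m$, there is $z\in[-1,1]^m$ with at least $m/2$ coordinates of $y+z$ at $\pm1$ and
\[
\opnorm[\big]{\nlsum_i z_iA_i}\le C_0\sqrt{m}\,\max\{1,\sqrt{\log(n/m)}\}.
\]
Iterating this on the still-fractional coordinates gives a geometric sum. Since $m$ halves each round, the sum $\sum_k \sqrt{n2^{-k}}\sqrt{1+k}$ is $O(\sqrt n)$, so a full coloring with discrepancy $C\sqrt n$ results. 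For efficiency I would not rely on the lemma existentially. Instead I would use a Lovett--Meka or Rothvoss style algorithm that finds the partial coloring once the relevant convex body $K=\{z:\opnorm{\sum z_iA_i}\le t\}$ has Gaussian measure at least $e^{-cm}$ (for Rothvoss) or satisfies a suitable small-ball bound in every subspace of dimension $\ge m/2$.

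The core step is a \emph{hereditary small-ball estimate}: for $g$ standard Gaussian on any subspace $V\subseteq\R^m$ with $\dim V\ge m/2$, I want $\Prob(\opnorm{\sum_i g_iA_i}\le C\sqrt m)\ge e^{-cm}$. A naive Gaussian bound only gives $\E\opnorm{\cdot}\sim\sqrt{m\log n}$, so a real small-ball argument is needed. Following the abstract, I would write $\mathbf 1\{\opnorm{M}\le t\}\ge \det(I-M^2/t^2)^{\beta}$ for $M=\sum g_iA_i$. This log-barrier weight vanishes at the boundary, and it is bounded by one inside the spectral ball if we truncate appropriately. Hence $\Prob(\opnorm M\le t)\ge \E[\det(I-M^2/t^2)_+^{\beta}]$, which I read as the partition function $Z$ of a tilted Gaussian measure $\mu\propto e^{-|g|^2/2+\beta\log\det(I-M^2/t^2)}$. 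To bound $Z$ from below, I would interpolate $t\mapsto$ (scaling, or $M_s=$ pinched part $+\,s\cdot$ off-diagonal part) and differentiate $\log Z$. The derivative is an expectation under the tilted measure of a trace quantity of the form $\Tr((I-M^2/t^2)^{-1}\cdots)$. I would control it by a matrix-weighted Poincar\'e (Brascamp--Lieb) inequality, using the log-concavity of $\mu$ inside the ball, since $\log\det(I-X^2)$ is concave there. The goal is to show the derivative integrates to at most $O(m)$ with constants independent of $n$.

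The comparison endpoint is the pinched or diagonal part, where the small-ball probability can be estimated essentially coordinatewise. There, with $t=C\sqrt m$ and $\beta\approx m/n$, each eigen-direction contributes a factor $e^{-O(m/n)}$, so the total is $e^{-O(m)}$. This choice of $\beta\sim m/n$ is what removes the $\log n$, in analogy with Spencer's entropy method. Restricting $g$ to a subspace $V$ only changes the covariance, and the Brascamp--Lieb argument is insensitive to this, so the estimate is hereditary.

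I expect the main obstacle to be the Poincar\'e comparison step: bounding the variance and derivative terms with \emph{dimension-free} constants. Off-diagonal interactions in $M^2$ could produce a $\log n$ loss, as in the noncommutative Khintchine inequality. My first attempt would use the Hessian lower bound of $-\log\det(I-M^2/t^2)$, namely $\succeq 2t^{-2}\,\mathcal A^*(I-M^2/t^2)^{-1}\mathcal A$ where $\mathcal A(g)=\sum g_iA_i$. The aim is to show that the gradient terms are dominated by exactly this weighted quadratic form, so their ratio is bounded by a constant independent of $n$.
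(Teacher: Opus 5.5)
Your architecture (Gluskin/Rothvoss partial coloring, a determinantal tilt of the Gaussian, interpolation from the diagonal part, a matrix-weighted Poincar\'e inequality) is the paper's. The quantitative core you propose, however, cannot work.

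\textbf{The radius claim is false.} The hereditary estimate $\Prob(\opnorm{\nlsum_i g_iA_i}\le C\sqrt m)\ge e^{-cm}$ fails when $m\ll n$. Take $m\le\log_2 n$ diagonal $\pm1$ matrices whose diagonals run through all $2^m$ sign patterns. Then $\opnorm{\nlsum_i g_iA_i}=\norm{g}_1$, and $\Prob(\norm{g}_1\le C\sqrt m)\le (C'/\sqrt m)^m$.

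Your diagonal-endpoint computation fails for the same reason. Whatever $\beta$ is, the weight $\det(I-M^2/t^2)_+^\beta$ vanishes unless every diagonal entry satisfies $|Z_j|<t$. With $\Var(Z_j)$ as large as $m$ and $t=C\sqrt m$, each of the $n$ constraints can fail with probability bounded below by a constant. So there is no per-direction factor $e^{-O(m/n)}$, and the example above is already diagonal.

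The radius must grow with $\log\frac{2n}{m}$. The paper uses $R(m)=\kappa\sqrt m\,(1+\log\frac{2n}{m})^2$. The $\log n$ disappears because these radii are summable over the halving rounds, not through the choice of $\beta$. Your partial-coloring lemma with $\sqrt{\log(n/m)}$ contradicts your core step and is stronger than anything the paper proves; any polylogarithmic factor suffices.

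\textbf{A single power $\det(\cdot)^\beta$ cannot give a dimension-free Poincar\'e comparison.} This is the more serious gap. Differentiating the resolvent weight $H_\rho^{-1}$ produces negative curvature terms $-E_j^\rho E_i^\rho$. By Lemma~\ref{lem:cyclic-tensor-bounds}, these are controlled only by $\nlsum_{i,j}\Tr(E_i^\rho E_j^\rho)\langle z_i,z_j\rangle$, which is the log-barrier Hessian itself and is unbounded near $\pm1$. So near the spectral boundary the barrier must carry a coefficient bounded below by an absolute constant: the exponent $4$ in Lemma~\ref{lem:cofactor-poincare}, and $\alpha\ge A_*$, $\alpha\beta\ge D_*$ in Proposition~\ref{prop:two-slot-curvature}.

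Your $\beta\approx m/n$ is in fact the right size for the bulk coefficient; the paper's $\beta=D/\alpha$ with $\alpha=A\max\{1,\frac nm\eta\}$, where $\eta$ bounds $\nlsum_i(A_i/t)^2$. But with that coefficient alone, the ratio cannot be bounded independently of $n$. Conversely, a large exponent $\alpha$ on the whole barrier costs roughly $e^{-\alpha n\eta}$ at the diagonal endpoint, which is $e^{-O(m)}$ only if $\alpha\lesssim m/(n\eta)$.

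\textbf{The missing idea is the split potential} $v_{q,\alpha,\beta}=\beta\ell+(\alpha-\beta)r_q$:
\begin{itemize}
\item The high-degree tail $r_q$ supplies curvature $c_0\alpha$ only for eigenvalues within $a_0/q$ of $\pm1$. It costs only about $\alpha n4^{-q}/q$ on the diagonal when $q\asymp\log\frac{2n}{m}$.
\item In the bulk the cyclic terms are small on their own, since $\nlsum_i(E^\rho_{i,\mathrm{bulk}})^2\preceq Cq^2\eta I$.
\item Boundary--bulk cross terms are absorbed by Young's inequality using $\alpha\beta\ge D_*$.
\end{itemize}

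\textbf{You also need an actual mechanism for the interpolation error.} Your first-derivative plan offers none for the high-moment, resolvent-weighted terms. The paper uses $F'(0)=0$ and $F''\ge-\E_{\mu_t}D^2\mathcal V(B_t)[Y,Y]$. It bounds the tail term $\E_{\mu_t}\Tr(H_\rho^{-1}B_t^{2q-2}YH_\rho^{-1}Y)$ by iterating a defective two-slot Poincar\'e inequality on the polynomial $B^{q-1}Y$, together with Gaussian trace moments. This requires $q^3\eta\ll1$, which is exactly where the factor $(1+\log\frac{2n}{m})^2$ in the radius comes from.
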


The proof of Theorem \ref{thm:matrix-spencer} proceeds via iteratively applying Gluskin's method for partial coloring \cite{Gluskin1989Extremal, Giannopoulos1997VectorBalancing}. We show the required Gaussian measure lower bounds in Theorem \ref{thm:mesoscopic-smallball}, which we refer to as
(hereditary) matrix small-ball. 

Compared to prior work \cite{dadush2022matrix,hopkins2022matrix,BansalJiangMeka2024MatrixSpencer}, we establish these lower bounds via a different route. We soften the indicator of the operator-norm ball by scaling it via a determinantal weight (inspired by log-barriers), which turns the small-ball probability into the partition function of a convex tilt of the Gaussian measure. The heart of the paper is a \emph{dimension-free ratio bound}: this partition function is bounded below by that of a suitably pinched version of the matrix, with constants independent of the matrix dimension (Theorem~\ref{thm:rsc} for a single rank-one pinching, Section~\ref{sec:interpolation} for the passage to the diagonal part). Since the partition function of a diagonal matrix is a scalar Gaussian quantity, such ratio bounds directly yield small-ball estimates. The key tool behind them is a matrix-weighted Poincar\'e inequality for the tilted measures, developed in Section~\ref{sec:matrix-poincare}. Our main small-ball result, which is what makes the partial coloring method succeed, is the following.

\begin{theorem}[Hereditary Matrix Small Ball]
\label{thm:mesoscopic-smallball}
Let $n\le d$. Then, for every $c_*>0$ there is a constant
$\kappa=\kappa(c_*)>0$ such that for all $d \times d$ symmetric matrices $A_1,\dots,A_n$ with $\opnorm{A_i}\le1$, we have

\[
\Prob\biggl\{
\opnorm[\Big]{\nlsum_{i=1}^n g_iA_i}
\le
\kappa\sqrt n
\Bigl(1+\log\frac{2d}{n}\Bigr)^2
\biggr\}
\ge
e^{-c_*n}.
\]
\end{theorem}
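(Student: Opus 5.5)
We will follow the program the introduction lays out: turn the small-ball probability into a partition function, then compare that partition function to the one for the diagonal (pinched) part of the matrix.

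\textbf{Step 1: determinantal weight.} Write $G=\nlsum_i g_iA_i$ and fix a level $t=\kappa\sqrt n\,(1+\log\frac{2d}{n})^2$. On the event $\{\opnorm{G}<t\}$ define the weight
\[
w_t(G)=\det\bigl(I-G^2/t^2\bigr)^{\beta},
\]
and set $w_t=0$ off this event, where $\beta>0$ is a parameter to be chosen. Since $0\le w_t\le 1$, we have $\Prob\{\opnorm{G}\le t\}\ge \E\, w_t(G)=:Z(t)$. The function $-\log w_t$ is a log-barrier, hence convex in $g$. So $Z(t)$ is the partition function of a log-concave tilt $\mu_t$ of the standard Gaussian measure on $\R^n$, and it suffices to show $Z(t)\ge e^{-c_*n}$.

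\textbf{Step 2: pinching comparison.} We want to compare $Z(t)$ with the analogous quantity $Z_{\mathrm{diag}}(t)$, in which each $A_i$ is replaced by its pinching (diagonal part) in a suitable basis. The tool is Theorem~\ref{thm:rsc}, a dimension-free ratio bound for a single rank-one pinching. We iterate it along the interpolation of Section~\ref{sec:interpolation}. Concretely, we differentiate $\log Z$ along a path $A_i(s)$ running from $A_i$ to $\mathcal D(A_i)$. The derivative is an expectation under $\mu_t$ of a trace involving $(t^2-G^2)^{-1}$. We control its fluctuation with the matrix-weighted Poincar\'e inequality of Section~\ref{sec:matrix-poincare}, which holds because $\mu_t$ is more log-concave than the Gaussian. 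The target bound is
\[
\log Z(t)\ge \log Z_{\mathrm{diag}}(t/C)-c_*n/2,
\]
with $C$ independent of $d$. We expect the factor $(1+\log\frac{2d}{n})^2$ to be spent here. Only $n$ directions carry mass, but a rank-$n$ compression of a $d$-dimensional space costs a $\log(d/n)$ loss in each of the two places where a barrier estimate is used. A first reduction is to block-decompose $\R^d$ dyadically according to the size of $\nlsum_i A_i^2$, so that only $O(\log(d/n))$ scales matter.

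\textbf{Step 3: the diagonal case.} For diagonal $D_i$ with $|D_i|\le 1$, the determinant factorizes over coordinates $j$. Each factor is a scalar barrier in the Gaussian $\ip{g}{v_j}$, with $v_j=((D_i)_{jj})_i$ and $\norm{v_j}_2\le\sqrt n$. Applying Gaussian correlation (Royen) or Šidák's inequality to these symmetric slabs gives a lower bound of $\nlprod_j \Prob\{|\ip{g}{v_j}|\le s\}$. Since $\nlsum_j\norm{v_j}^2$ can be large, we reduce by the standard Spencer/Gluskin slab count: there are at most $d$ slabs of width $s\gtrsim\sqrt n\log(2d/n)$, and their product of probabilities is $\ge e^{-c n}$, with $c$ small once $\kappa$ is large.

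\textbf{Main obstacle.} The hard step is Step 2: keeping the constants in the Poincar\'e comparison dimension-free. The natural covariance bound involves $\Tr\bigl((t^2-G^2)^{-1}\bigr)$, which scales with $d$. The first approach we would try is to choose $\beta$ comparable to $n/d$ times a logarithmic factor, so that the total barrier mass is $O(n)$. We would then use the matrix-weighted Poincar\'e inequality with weight $(t^2-G^2)^{-1}$, which lets the trace be absorbed into the very log-partition derivative being bounded, in a Gr\"onwall-type argument.
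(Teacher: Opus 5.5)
There is a genuine gap, and it sits in Step~2, which carries the whole difficulty. Steps~1 and~3 are essentially fine: they match the paper's reduction $Z\le\Prob\{\opnorm{B}<1\}$ and its diagonal bound via the Gaussian correlation inequality.

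First, Theorem~\ref{thm:rsc} cannot be iterated in this regime. Its guarantee is a constant factor per rank-one pinching, so $d-1$ pinchings give only $c_{\mathrm{Schur}}^{d-1}=e^{-\Theta(d)}$. That is exactly the non-hereditary Theorem~\ref{thm:matrixsmall}, and it is useless when $d\gg n$. The paper does not use Theorem~\ref{thm:rsc} here at all. It runs a single continuous interpolation $B_t=B_{\mathrm d}+tB_{\mathrm o}$, observes $F'(0)=0$ (since $v'(B_{\mathrm d})$ is diagonal and $B_{\mathrm o}$ has zero diagonal), and lower-bounds $F''(t)\ge-\E_{\mu_t}D^2\mathcal V(B_t)[Y,Y]$.

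Second, and more fundamentally, no single power $\det(I-B^2)^{\beta}$ can serve as the weight. You are right that the bulk forces $\beta\lesssim n/(d\eta)$. Otherwise $Z_0\le\det(I+2\beta\Sigma)^{-1/2}$ is already $e^{-\omega(n)}$, e.g.\ for diagonal $A_i$ with random signs. But for $\beta<1$ there is no curvature at the spectral edge. The normalized curvature of $\det(H)^{\beta}H^{-1}$ is $\beta\Tr(\widetilde E_i\widetilde E_j)I-\widetilde E_j\widetilde E_i$ with $\widetilde E_i=H^{-1/2}E_iH^{-1/2}$; compare \eqref{eq:cofactor-curvature}, where the exponent $4\ge1$ is exactly what makes it nonnegative. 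The cyclic form $\nlsum_{i,j}z_i^\top\widetilde E_j\widetilde E_iz_j$ can be as large as $\nlsum_{i,j}\Tr(\widetilde E_i\widetilde E_j)\langle z_i,z_j\rangle$. Both blow up like $\lambda_{\min}(H)^{-2}$ near $\partial\Omega$, so the Gaussian term cannot compensate, and a matrix-weighted Poincar\'e inequality with your weight $(t^2-G^2)^{-1}$ is not available. Worse, for $\beta\le1$ the Hessian term is not even finite. Near $\opnorm{B}\uparrow1$ the integrand of $\E_\mu D^2\mathcal V[Y,Y]$ generically behaves like $(1-\opnorm{B})^{\beta-2}$, so there is nothing for a Gr\"onwall argument to close on. The paper needs edge exponent $\alpha>4$ already for Lemma~\ref{lem:regularity}.

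The missing idea is the split potential \eqref{eq:potentialfunction}, $v_{q,\alpha,\beta}=\beta\ell+(\alpha-\beta)r_q$. The small weight $\beta=D/\alpha$, with $\alpha=A\max\{1,d\eta/n\}$ (essentially your $\beta$), acts only on the low-degree part of the barrier. The large weight $\alpha\ge32$ acts on the tail $r_q$ of degree $\ge2q$, with $q\asymp\log(2d/n)$.

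The tail supplies curvature within $a_0/q$ of $\pm1$ (Lemma~\ref{lem:tail-divided-difference}). This absorbs the boundary part of the negative curvature of the weights $H_\tau^{-1}\otimes H_\sigma^{-1}$. The bulk part is harmless because $\nlsum_i(E^\rho_{i,\mathrm{bulk}})^2\preceq Cq^2\eta I$, and the cross terms are handled by $\alpha\beta\ge D_*$ (Proposition~\ref{prop:two-slot-curvature}). Yet the tail is invisible in the bulk: on the diagonal it costs at most $\frac{2\alpha d}{q}4^{-q}\le\delta n$.

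The Hessian cost then splits as $C\beta d\eta+C\alpha qd(Cq^3\eta)^q$. The second term comes from iterating the defective Poincar\'e inequality of Proposition~\ref{thm:defective-two-slot} on $B^{q-1}Y$ together with Gaussian trace moments. This is where the polylogarithmic factor is spent: one needs $q^2\eta$ and $q^3\eta$ small with $q\asymp L$, which $\eta=1/(\kappa^2L^4)$ guarantees. It is not spent on a dyadic decomposition of $\R^d$, which your proposal leaves unsubstantiated.
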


Theorem~\ref{thm:mesoscopic-smallball} is the hereditary form of the small-ball estimate. Its non-hereditary counterpart, obtained by iterating the one-step ratio bound of Theorem~\ref{thm:rsc} along $d-1$ rank-one pinchings, is the following small-ball estimate with explicit constants. It is also the starting point of our approach, 
which the interpolation argument of Section~\ref{sec:hereditary-small-ball} then generalizes.

\begin{theorem}[Matrix Small Ball] \label{thm:matrixsmall}
    Let $X = \nlsum_{i=1}^n g_i A_i$ be a $d \times d$ symmetric random Gaussian matrix with $\E X^2 = \nlsum_{i=1}^n A_i^2 \preceq I_d $. Then there exist universal constants $C,c>0$ such that $$\Prob\bigl\{\opnorm{X} < C \bigr\} = \Prob\Bigl\{\opnorm[\Big]{\nlsum_{i=1}^n g_i A_i} < C \Bigr\} \geq \exp(-cd).$$
    In particular, the absolute constants can be chosen to satisfy $C = 4$ and 
$c\approx 2.614861$.
\end{theorem}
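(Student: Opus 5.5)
We will prove the statement by softening the indicator of the operator-norm ball with a determinantal weight and then comparing partition functions along a sequence of $d-1$ rank-one pinchings, as the paper indicates via Theorem~\ref{thm:rsc}. Fix $C=4$ and write $X=\nlsum_i g_iA_i$. Since $\opnorm{X}<C$ iff $C^2I-X^2\succ0$, we have the pointwise bound
\[
\one\{\opnorm{X}<C\}\ \ge\ \det\bigl(I-X^2/C^2\bigr)_+^{\beta}
\]
for any $\beta>0$, where the weight vanishes off the ball and is at most $1$ on it. Hence
\[
\Prob\{\opnorm{X}<C\}\ \ge\ Z(A):=\E\bigl[\det(I-X^2/C^2)_+^{\beta}\bigr].
\]
Equivalently, $Z(A)$ is the partition function of the Gaussian measure tilted by $\exp\bigl(\beta\log\det(I-X^2/C^2)\bigr)$. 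Because $g\mapsto-\log\det(I-X(g)^2/C^2)$ is convex on the convex set $\{\opnorm{X}<C\}$, the tilted measure is log-concave. This is the setting in which the matrix-weighted Poincar\'e inequality of Section~\ref{sec:matrix-poincare} applies.

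The core step is the one-step ratio bound of Theorem~\ref{thm:rsc}. Write $P=vv^\top$ for a unit vector $v$, and let $\mathcal P_v(M)=PMP+(I-P)M(I-P)$ be the pinching that kills the off-diagonal block between $v$ and $v^\perp$. The bound should take the form
\[
Z(A_1,\dots,A_n)\ \ge\ e^{-c_1}\,Z\bigl(\mathcal P_v(A_1),\dots,\mathcal P_v(A_n)\bigr),
\]
with $c_1$ absolute and independent of $d$. Pinching preserves the hypothesis $\nlsum_i\mathcal P_v(A_i)^2\preceq \mathcal P_v\bigl(\nlsum_iA_i^2\bigr)\preceq I$, by operator convexity of the square, so the map can be applied again.

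Iterating along $v=e_1,\dots,e_{d-1}$ in an eigenbasis, after $d-1$ pinchings each $A_i$ becomes diagonal, $D_i=\diag(A_i)$. Then $X$ is diagonal with independent-structure entries $X_{jj}=\nlsum_ig_i(A_i)_{jj}$, each of variance at most $1$. The determinant factorizes as
\[
\det(I-X^2/C^2)_+^{\beta}=\prod_j\bigl(1-X_{jj}^2/C^2\bigr)_+^{\beta}.
\]
The factors are even, quasi-concave functions of the jointly Gaussian vector $(X_{jj})_j$. By the Gaussian correlation inequality (Royen), applied inductively to symmetric log-concave, hence quasi-concave symmetric, functions,
\[
\E\prod_j f_j\ \ge\ \prod_j\E f_j .
\]
Each $\E f_j$ is at least $\E(1-\sigma^2\gamma^2/C^2)_+^\beta$ with $\gamma\sim N(0,1)$ and $\sigma\le1$, which is a scalar constant $e^{-c_2}$. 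Combining the steps gives
\[
\Prob\{\opnorm{X}<C\}\ \ge\ e^{-c_1(d-1)-c_2d}\ \ge\ e^{-cd}.
\]
The explicit value $c\approx2.61$ would come from optimizing $\beta$ and tracking $c_1,c_2$ at $C=4$.

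The main obstacle is the dimension-free ratio bound itself. I would write the pinched configuration as an interpolation $A_i(t)=\mathcal P_v(A_i)+t\bigl(A_i-\mathcal P_v(A_i)\bigr)$ for $t\in[0,1]$, and differentiate $\log Z(t)$. The derivative involves the tilted expectation of $\Tr\bigl[(I-X^2/C^2)^{-1}(X\,\partial_tX+\partial_tX\,X)\bigr]$. Its first-order part should vanish by the symmetry $v\mapsto -v$, which flips the sign of the off-diagonal block. The second-order and variance terms must then be controlled by the matrix-weighted Poincar\'e inequality, using weight $(I-X^2/C^2)^{-1}$ for the tilted log-concave measure. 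The key point is that the off-diagonal block has rank at most two per $A_i$, and $\nlsum_iA_i^2\preceq I$, so the resulting bound on $\frac{d}{dt}\log Z$ should be $O(1)$ rather than $O(d)$. Getting this trace estimate without dimension loss is where the real work lies. I expect it to need $\beta$ of order $1$ and the margin $C=4$ to keep the barrier Hessian dominating the Gaussian fluctuations.
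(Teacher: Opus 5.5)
Your outer architecture matches the paper's. You soften the indicator by a determinantal weight, pinch along $e_1,\dots,e_{d-1}$ using $\mathcal P_v(A)^2\preceq\mathcal P_v(A^2)$ to preserve $\nlsum_iA_i^2\preceq I$, and finish on the diagonal with \v{S}id\'ak/Gaussian correlation.

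There is one fixable slip. $\det(I-X^2/C^2)_+^\beta$ does not vanish off the ball: the determinant is positive, and can exceed $1$, whenever an even number of eigenvalues exceed $C$ in modulus. So the pointwise bound $\one\{\opnorm{X}<C\}\ge\det(I-X^2/C^2)_+^\beta$ and the factorization into $\prod_j(1-X_{jj}^2/C^2)_+^\beta$ are false as written. Use $\det(I-X^2/C^2)^\beta\one_{\{\opnorm{X}<C\}}$, as the paper does.

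The genuine gap is the step that carries the whole theorem: the one-step bound $Z(A)\ge e^{-c_1}Z(\mathcal P_v(A))$ with $c_1$ independent of $d$. You assert it and sketch an interpolation whose decisive estimate you leave open. If you may cite Theorem~\ref{thm:rsc} as stated, it fixes $\beta=C=4$. The argument then closes with $\Prob\{\opnorm{X}<4\}\ge c_{\mathrm{Schur}}^{d-1}(3/4)^d$, which is where the stated $c$ comes from; no optimization over $\beta$ is involved. As a route to the ratio bound itself, your sketch misses the key ideas.

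\emph{Why the sketch does not close.} The rank-two structure is beside the point. The size of the perturbation is already controlled by the variance normalization: with $u_i$ the off-diagonal column of $A_i$ and $Y_i=A_i-\mathcal P_v(A_i)$, one has $\nlsum_i\hsnorm{Y_i}^2=2\nlsum_i\norm{u_i}^2\le2\,v^\top(\nlsum_iA_i^2)v\le2$. The real difficulty is the Hessian term $\frac{\beta}{C^2}\E_{\mu_t}\nlsum_{\rho=\pm1}\Tr(H_\rho^{-1}YH_\rho^{-1}Y)$, with $H_\rho=I+\rho X_t/C$. Its resolvents are unbounded on the support of $\mu_t$ and correlated with $Y$. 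You therefore need (i) a dimension-free bound on an averaged resolvent under the tilted measure, and (ii) a mechanism decoupling it from the off-diagonal block. Neither is in your plan. The tool you name also does not apply as is: the curvature condition behind the matrix-weighted Poincar\'e inequality is verified only for weights built from inverses of matrices affine in $g$ (Lemma~\ref{lem:cofactor-poincare}). $(I-X^2/C^2)^{-1}$ is not of that form.

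\emph{What the paper does instead.} It proves Theorem~\ref{thm:rsc} without differentiating $\log Z$.
\begin{itemize}
\item Write $B=X/4=\begin{pmatrix}a&b^\top\\b&C\end{pmatrix}$, where $C=C(g)$ is now a block, not your radius. The Schur identity $\det(I-B^2)=\det(I-B_0^2)(1-\xi_+)(1-\xi_-)$ (Lemma~\ref{lem:schur-factorization}) makes the ratio an exact expectation.
\item Dropping $(1-a^2)^4\le1$ and applying Jensen to $t\mapsto(1-t)_+^8$ leaves two quantities under the measure $\nu$ tilted by $\det(I-C^2)^4$: $\E_\nu|a|$, handled by Brascamp--Lieb, and $\E_\nu\,b^\top(I-C^2)^{-1}b$.
\item For the latter, $I-C^2$ is linearized by the affine lift $H=\begin{pmatrix}I&C\\C&I\end{pmatrix}$, whose inverse has $G=(I-C^2)^{-1}$ as diagonal blocks.
\item The Poincar\'e inequality with test field $(Ce,0)$ gives $I-S^{-1}\preceq\nlsum_iD_iSD_i$ for $S=\E_\nu G$. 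Here $D_i$ are the $v^\perp$-blocks of the $A_i$, not your $\diag(A_i)$.
\item A continuity argument along $C\mapsto tC$ excludes the large root, giving $S\preceq(8-4\sqrt3)I$ (Proposition~\ref{prop:S-bound}).
\item The test field $(b,0)$, with a Cauchy--Schwarz bound on the mean correction, decouples $b$ from $G$ and yields $\E_\nu\,b^\top Gb\le\Lambda^2\sigma$ (Proposition~\ref{prop:q-bound}).
\end{itemize}
An interpolation like yours is what the paper does in Section~\ref{sec:hereditary-small-ball}. There, however, the curvature and averaged-resolvent estimates are proved only under a smallness condition $q^2\eta\le c_{\mathrm P}$, which a radius of $4$ does not provide.
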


\subsection{Discussion and Related Work}
The Matrix Spencer problem dates back to \cite{zouzias2012,meka2014}. We also refer  the reader to the 
nice blog post by Afonso Bandeira \cite{bandeira2025randomstrasse101openproblems2024} for a detailed 
exposition.  A substantial line of work has studied various approaches to analyze  Matrix Spencer.   
Hopkins, Raghavendra and Shetty \cite{hopkins2022matrix} established the conjectured $O(\sqrt n)$ bound 
for  matrices of rank at most $\sqrt{n}$, using tools from quantum communication complexity and quantum information theory. Around the same time, Dadush, Jiang and Reis \cite{dadush2022matrix}
reduced Matrix Spencer to constructing quantum relative entropy nets for the spectraplex, proving low-
rank and block-diagonal matrix discrepancy bounds.  Bansal, Jiang and Meka  
\cite{BansalJiangMeka2024MatrixSpencer} resolved Matrix Spencer for matrices up to poly-logarithmic 
rank $n/\log^3 n$. In their proof they established a close connection of matrix discrepancy to the 
matrix concentration inequalities developed in \cite{BandeiraBoedihardjoVanHandel2023}. Moreover,  
recent work by Bandeira and B\"olcskei \cite{BandeiraBolcskei2026MatrixDiscrepancy}, and  Akbas and Sra 
\cite{akbas2026algebraicmatrixspencertheorem}
established Matrix Spencer for families of matrices that correspond to the regular representation of a finite group, or more generally, that are elements of a finite dimensional $C^*$-algebra of dimension at most $O(n)$.

This work studies a novel approach to matrix discrepancy. We reduce Matrix Spencer to lower bounds on partition functions of convex tilts of the Gaussian measure, and we obtain these from dimension-free bounds on the ratio between the partition function of a matrix series and that of its pinched or diagonal part, as outlined above.

\section{Proof of the Matrix Small Ball Theorem~\ref{thm:matrixsmall}}
\label{sec:matrixsmall}
The idea for proving the Matrix Small Ball Theorem may intuitively be viewed as using $\det(I-X^2)$ as a proxy for the spectral norm. That is, we define the partition function
\begin{equation}\label{eq:partition-function}
\mathcal Z_{R,\alpha}(B) :=
\E[
\det(I_d-B(g)^2)^\alpha
\one_{\{\opnorm{B(g)}<1\}}
],
\end{equation}
where $B(g)=\frac1R\nlsum_i g_iA_i$ for parameters $R>0$ and integer $\alpha\ge1$. Since the integrand in \eqref{eq:partition-function} lies in $[0,1]$, we immediately get the bound
\begin{equation}\label{eq:partition-below-prob}
\mathcal Z_{R,\alpha}(B)
\le
\Prob\Bigl\{\opnorm[\Big]{\nlsum_i g_iA_i}<R\Bigr\}.
\end{equation}
Therefore, based on \eqref{eq:partition-below-prob} the proof of Theorem \ref{thm:matrixsmall} reduces to proving lower bounds on the partition function in \eqref{eq:partition-function}. The determinantal weight is what makes this tractable: it is log-concave, it vanishes exactly at the boundary of the operator-norm ball, and it factorizes exactly under rank-one pinching (Lemma~\ref{lem:schur-factorization} below). To this end, let $v$ be some unit vector and consider the block decomposition of the $A_i$ with respect to $\R^d = \text{span}\{v\}\oplus v^\top$,
\[
A_i=
\begin{pmatrix}
a_i&u_i^\top\\
u_i&D_i
\end{pmatrix}.
\]
Now write $a=\frac1R\nlsum_i g_i a_i$, $b=\frac1R\nlsum_i g_i u_i$, and $C=\frac1R\nlsum_i g_iD_i$, and then define the block matrices
\begin{equation}\label{eq:full-pinched-blocks}
B=
\begin{pmatrix}
a&b^\top\\b&C
\end{pmatrix},
\qquad
B_0=
\begin{pmatrix}
a&0\\0&C
\end{pmatrix},
\end{equation}
omitting the dependence on $g$ for readability. Now, let $\Omega_0=\{g:|a|<1,\ \opnorm C<1\}$ and define on this event $\Omega_0$
\begin{equation}\label{eq:xi-definitions}
\xi_+=\frac{b^\top(I+C)^{-1}b}{1+a},
\qquad
\xi_-=\frac{b^\top(I-C)^{-1}b}{1-a}.
\end{equation}

\begin{lemma}[Schur-complement factorization]\label{lem:schur-factorization}
On the event $\Omega_0$ we have that,
\begin{align}
\det(I+B)&=\det(I+C)(1+a)(1-\xi_+),\label{eq:schur-plus}\\
\det(I-B)&=\det(I-C)(1-a)(1-\xi_-).\label{eq:schur-minus}
\end{align}
Consequently,
\begin{equation}\label{eq:schur-product}
\det(I-B^2)
=
\det(I-B_0^2)(1-\xi_+)(1-\xi_-)
\end{equation}
and,
\begin{equation}\label{eq:equivalence}
    \opnorm B<1
\quad\Longleftrightarrow\quad
g\in\Omega_0,\ \xi_+<1,\ \xi_-<1.
\end{equation}
\end{lemma}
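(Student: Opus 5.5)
The plan is to apply the Schur complement formula to $I\pm B$ with respect to the block decomposition $\R^d=\operatorname{span}\{v\}\oplus v^\perp$, pivoting on the $(d-1)\times(d-1)$ block. On $\Omega_0$ we have $\opnorm C<1$, so $I\pm C$ is positive definite and hence invertible. We may then write
\[
I+B=\begin{pmatrix}1+a&b^\top\\ b&I+C\end{pmatrix},
\qquad
\det(I+B)=\det(I+C)\bigl((1+a)-b^\top(I+C)^{-1}b\bigr).
\]
Since $|a|<1$, we have $1+a>0$. Factoring it out gives $(1+a)(1-\xi_+)$, which is \eqref{eq:schur-plus}. Replacing $(a,b,C)$ by $(-a,-b,-C)$ gives the same computation for $I-B$. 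The sign of $b$ drops out of the quadratic form, so we obtain \eqref{eq:schur-minus} with $\xi_-$.

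For \eqref{eq:schur-product}, I will use $\det(I-B^2)=\det(I+B)\det(I-B)$, which holds because $I+B$ and $I-B$ commute. Multiplying \eqref{eq:schur-plus} and \eqref{eq:schur-minus} gives
\[
\det(I-B^2)=\det(I+C)\det(I-C)(1-a^2)(1-\xi_+)(1-\xi_-).
\]
Because $B_0$ is block diagonal, $\det(I-B_0^2)=(1-a^2)\det(I-C^2)$, and \eqref{eq:schur-product} follows.

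For the equivalence \eqref{eq:equivalence}, note that $\opnorm B<1$ is the same as requiring both $I+B\succ0$ and $I-B\succ0$. I will use the standard Schur complement criterion for positive definiteness: a symmetric block matrix $\begin{pmatrix}\alpha&\beta^\top\\ \beta&M\end{pmatrix}$ is positive definite iff $M\succ0$ and $\alpha-\beta^\top M^{-1}\beta>0$.

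For the forward direction, assume $\opnorm B<1$. The diagonal entry $a=v^\top Bv$ and the compression $C$ of $B$ to $v^\perp$ are both contractions in norm strictly below $1$, since $|a|\le\opnorm B$ and $\opnorm C\le\opnorm B$. Hence $g\in\Omega_0$. Positive definiteness of $I+B$ then forces its Schur complement to be positive, i.e.\ $(1+a)-b^\top(I+C)^{-1}b>0$. Dividing by $1+a>0$ gives $\xi_+<1$, and the same argument applied to $I-B$ gives $\xi_-<1$.

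Conversely, on $\Omega_0$ we have $I\pm C\succ0$, and $\xi_\pm<1$ together with $1\pm a>0$ makes the Schur complements positive. Hence $I\pm B\succ0$, i.e.\ $\opnorm B<1$.

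The only step needing care is the forward implication $\opnorm B<1\Rightarrow g\in\Omega_0$, which rests on the compression bounds above; everything else is direct computation.
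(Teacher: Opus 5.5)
Your proposal is correct and follows the same route as the paper. The paper takes Schur complements of $I\pm B$ with respect to the lower blocks $I\pm C$ to get \eqref{eq:schur-plus}--\eqref{eq:schur-minus}, multiplies them for \eqref{eq:schur-product}, and uses the Schur-complement criterion for positive definiteness of $I\pm B$ for \eqref{eq:equivalence}. You supply details the paper leaves implicit, such as $|a|\le\opnorm B$ and $\opnorm C\le\opnorm B$ in the forward direction. One small correction: $\det(I-B^2)=\det(I+B)\det(I-B)$ follows from $(I+B)(I-B)=I-B^2$ and multiplicativity of the determinant, not from commutativity.
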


\begin{proof}
Applying the Schur-complement formula to $I+B$ with lower block $I+C$, and to $I-B$ with lower block $I-C$ gives us \eqref{eq:schur-plus} and \eqref{eq:schur-minus}.  Moreover, positive definiteness of both $I+B$ and $I-B$ gives the final equivalence in \eqref{eq:equivalence}
\end{proof}

We define
\begin{equation} \label{eq:w0}
    w_0(g)=
(1-a(g)^2)^\alpha
\det(I-C(g)^2)^\alpha
\one_{\Omega_0}(g),
\qquad
 d\mu_0(g)=\frac{w_0(g)}{\E w_0}\,d\gamma_n(g),
\end{equation}
and we denote with $\mathcal E_P(M)=PMP+QMQ$ the pinching map associated with the orthogonal projection $P= vv^\top$. From now on we fix $R=\alpha=4$ and let
\[
B(g)=\frac14\nlsum_i g_iA_i
=
\begin{pmatrix}
a&b^\top\\
b&C
\end{pmatrix},
\qquad
\mathcal E_P(B)=B_0(g)=
\begin{pmatrix}
a&0\\
0&C
\end{pmatrix},
\]
and denote with
\begin{equation}
    \label{eq:zfull}
    Z_{\mathrm{full}} = \mathcal Z_{R,\alpha}(B)
=
\E\bigl[
\det(I_d-B(g)^2)^4
\one_{\{\opnorm{B(g)}<1\}}
\bigr]
\end{equation}
and
\begin{equation}
    \label{eq:zpin}
    Z_{\mathrm{pin}} = \mathcal Z_{R,\alpha}(\mathcal E_P(B))
=
\E\bigl[
\det(I_d-B_0(g)^2)^4
\one_{\{\opnorm{B_0(g)}<1\}}
\bigr],
\end{equation}
the respective partition functions. Equation \eqref{eq:schur-product} in Lemma \ref{lem:schur-factorization} is then equivalent to the following identity
\begin{equation}\label{eq:one-step-ratio}
\frac{Z_{\mathrm{full}}}{Z_{\mathrm{pin}}}=
\frac{\mathcal Z_{R,\alpha}(B)}
{\mathcal Z_{R,\alpha}(\mathcal E_P(B))}
=
\E_{\mu_0}
(1-\xi_+)_{+}^{\alpha}
(1-\xi_-)_{+}^{\alpha}.
\end{equation}
Hence, our goal will be proving lower bounds on the right-hand side of equation 
\eqref{eq:one-step-ratio}. Indeed, in   Section \ref{sec:dim-free} we will prove a dimension-free lower bound $Z_{\mathrm{full}} \geq c_\text{Schur} Z_{\mathrm{pin}}$ for the right-side of \eqref{eq:one-step-ratio} for $R=\alpha=4$. At a high level, the proof of Theorem \ref{thm:matrixsmall} proceeds by iterating this procedure $d-1$ times, until we arrive at a diagonal matrix at the final stage, at which point we simply apply the Gaussian correlation inequality.

\subsection{Matrix-Weighted Gaussian Poincar\'e Inequality }
\label{sec:matrix-poincare}
The goal of this section is to develop tools from high-dimensional probability and the analysis of Gaussian measures that we will use in Section \ref{sec:dim-free} to prove the aforementioned dimension-free lower bound for ratios of partition
functions.  In particular, we will need the following matrix-valued spectral gap estimate.

\begin{lemma}[Matrix-weighted Gaussian Poincar\'e inequality]
\label{lem:cofactor-poincare}
Let $H_0,E_1,\dots,E_n$ be symmetric $r\times r$ matrices, and set
\[
H(x)=H_0+\nlsum_{i=1}^n x_iE_i,
\qquad
\Omega=\{x\in\R^n:H(x)\succ0\},
\]
which we assume to be nonempty. Define the matrix weight
\[
\mathcal K(x)
=
\det(H(x))^4H(x)^{-1}\one_\Omega(x).
\]
Then, for every $F\in C^1(\Omega;\R^r)$ with
$\int F^\top\mathcal KF\,d\gamma_n<\infty$ (in particular, for every
polynomial map $F$; the right-hand side below may be $+\infty$),
\begin{align}
&\int F^\top\mathcal K F\,d\gamma_n
-
\Bigl(\int\mathcal K F\,d\gamma_n\Bigr)^\top
\Bigl(\int\mathcal K\,d\gamma_n\Bigr)^{-1}
\Bigl(\int\mathcal K F\,d\gamma_n\Bigr)
\nonumber\\
&\hspace{35mm}
\le
\int\nlsum_{i=1}^n
(\partial_iF)^\top\mathcal K(\partial_iF)\,d\gamma_n.
\label{eq:cofactor-poincare}
\end{align}
The same conclusion holds after multiplying $\mathcal K$ by a $C^\infty$
log-concave scalar factor.
\end{lemma}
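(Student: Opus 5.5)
The plan is to prove \eqref{eq:cofactor-poincare} by Hörmander's $L^2$ duality method, adapted to a matrix weight. Write $d\nu=\mathcal K\,d\gamma_n$ for the matrix-valued measure on $\Omega$ and $\langle F,G\rangle_{\mathcal K}=\int F^\top\mathcal K G\,d\gamma_n$. The left-hand side of \eqref{eq:cofactor-poincare} equals $\min_{c\in\R^r}\langle F-c,F-c\rangle_{\mathcal K}$, the squared $\mathcal K$-distance from $F$ to the constant vector fields. The minimizer is $c_*=(\int\mathcal K\,d\gamma_n)^{-1}\int\mathcal KF\,d\gamma_n$, which is well defined because $\Omega\neq\emptyset$ makes $\int\mathcal K\,d\gamma_n\succ0$. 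So it suffices to prove a spectral-gap bound $\ge1$ for the ``matrix Ornstein--Uhlenbeck'' operator
\[
L u=-\mathcal K^{-1}\nlsum_i\partial_i\bigl(\mathcal K\,\partial_i u\bigr)+\nlsum_i x_i\partial_i u,
\qquad u:\Omega\to\R^r .
\]
This operator is symmetric and nonnegative on $L^2(\mathcal K\,d\gamma_n)$, its Dirichlet form is the right-hand side of \eqref{eq:cofactor-poincare}, and its kernel consists of the constants.

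\textbf{Step 1 (boundary terms).} Because $\det H$ vanishes on $\partial\Omega$ and $\det(H)^4H^{-1}=\det(H)^3\operatorname{adj}(H)$ is a polynomial matrix vanishing to order $3$ there, every integration by parts over $\Omega$ has vanishing boundary terms for polynomial (or suitably truncated $C^1$) $u$. The order-$3$ vanishing is what will also absorb the boundary terms in the second-order (Bochner) identity, where one more derivative of the weight appears.

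\textbf{Step 2 (Bochner identity).} For $u$ in a dense class I compute $\langle Lu,Lu\rangle_{\mathcal K}$ by integrating by parts twice. The expected form is
\[
\langle Lu,Lu\rangle_{\mathcal K}=\int\nlsum_{i,j}(\partial_{ij}u)^\top\mathcal K\,\partial_{ij}u\,d\gamma_n+\int\nlsum_i(\partial_iu)^\top\mathcal K\,\partial_iu\,d\gamma_n+\mathcal R(u),
\]
where the middle term comes from the Gaussian and the remainder $\mathcal R(u)$ collects terms involving $\partial\mathcal K$ and $\partial^2\mathcal K$. Concretely, with $\partial_iH=E_i$, these involve $\partial_i\log\det H=\Tr(H^{-1}E_i)$ and $\partial_i H^{-1}=-H^{-1}E_iH^{-1}$.

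\textbf{Step 3 (matrix curvature), the main obstacle.} I must show that $\mathcal R(u)$, together with the Hessian term, is $\ge0$. After substituting $\mathcal K=\det(H)^4H^{-1}$, the pointwise integrand should be a quadratic form in $(\partial_iu,\partial_{ij}u)$ whose coefficients are built from $H^{-1/2}E_iH^{-1/2}$. My first attempt is to complete squares. Conjugating by $H^{-1/2}$ and setting $\tilde E_i=H^{-1/2}E_iH^{-1/2}$, the curvature of $-4\log\det H$ contributes $4\Tr(\tilde E_i\tilde E_j)$, which is positive semidefinite as a form in $i,j$. The non-commutative terms from differentiating $H^{-1}$ give expressions like $(\partial_iu)^\top H^{-1/2}\tilde E_i\tilde E_jH^{-1/2}\partial_ju$ with indefinite sign. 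I expect to control these by Cauchy--Schwarz against the Hessian term and the $\Tr(\tilde E\tilde E)$ term, using $\opnorm{\tilde E_i}^2\le\Tr(\tilde E_i^2)$. The exponent $4$ should be exactly large enough for this absorption to close. If direct square completion fails, I would fall back on the lifting $H^{-1}\propto\det(H)^{1/2}\int yy^\top e^{-y^\top Hy/2}dy$. This turns $F^\top\mathcal KF$ into the scalar quantity $(F\cdot y)^2$ under a density on $(x,y)$, and I would then look for the convexity of that joint potential that is actually needed.

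\textbf{Step 4 (conclusion).} From $\langle Lu,Lu\rangle_{\mathcal K}\ge\langle Lu,u\rangle_{\mathcal K}$, the standard duality argument gives $\lambda_1(L)\ge1$. Concretely, decompose $F-c_*$ via the resolvent, or solve $Lu=F-c_*$ and apply Cauchy--Schwarz. This yields \eqref{eq:cofactor-poincare}, first for polynomials and then by density for all $F$ with $\int F^\top\mathcal KF\,d\gamma_n<\infty$. For the final claim, a $C^\infty$ log-concave scalar factor $e^{-W}$ only adds the term $\int\nabla^2W(\partial u,\partial u)_{\mathcal K}\ge0$ to $\mathcal R(u)$. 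It does not disturb the boundary analysis, so the same argument applies.
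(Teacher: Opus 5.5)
Your architecture (a Bochner identity for the weighted operator, then $L^2$ duality) is the same mechanism as the paper's curvature criterion, Lemma~\ref{lem:matrix-weighted-curvature}. But Step~3, which carries all the content, is left as an expectation, and the absorption you sketch would not close. The Hessian term cannot help pointwise, because the curvature terms involve only first derivatives $\partial_i u$. Bounding the indefinite term index by index via $\opnorm{\tilde E_i}^2\le\Tr(\tilde E_i^2)$ also fails, because it loses the cross-term cancellations. Write $z_i$ for the normalized $\partial_iu$. The positive term is $4\sum_{i,j}\Tr(\tilde E_i\tilde E_j)\langle z_i,z_j\rangle=4\norm{\sum_i\tilde E_i\otimes z_i}^2$. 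It vanishes for $z_1=z_2$, $\tilde E_2=-\tilde E_1$, whereas $(\sum_i\opnorm{\tilde E_i}\norm{z_i})^2$ does not, so nothing can be absorbed.

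What is needed is an exact pointwise computation. Fix $x_0$ and conjugate so that $H(x_0)=I_r$. Then $\Theta_{ij}(K)=(\partial_iK)K^{-1}(\partial_jK)-\partial_{ij}K=4\Tr(E_iE_j)I_r-E_jE_i$, with the \emph{twisted} order $E_jE_i$. Set $T_{abc}=\sum_i(E_i)_{ab}(z_i)_c$. Then $\sum_{i,j}z_i^\top\Theta_{ij}(K)z_j=4\norm{T}_2^2-\sum_{a,b,c}T_{abc}T_{bca}\ge3\norm{T}_2^2$, because $T_{abc}\mapsto T_{bca}$ is an orthogonal permutation of coordinates. The Gaussian factor then adds $\delta_{ij}W$, so the curvature hypothesis holds with $\lambda=1$ and no Hessian term is used. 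The same computation for $\det(H)^pH^{-1}$ gives $p\Tr(E_iE_j)I_r-E_jE_i$, so any $p\ge1$ suffices; the exponent $4$ is not what makes the argument close.

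Steps~1 and~4 also hide a genuine analytic gap. The vanishing of $\det(H)^3\operatorname{adj}(H)$ on $\partial\Omega$ kills boundary terms only for fields smooth up to $\partial\Omega$, such as polynomials. To turn the Bochner inequality on polynomials into a spectral gap, you need one of two things. Either the solution of $Lu=F-c_*$ must be regular up to a possibly non-smooth, unbounded spectrahedral boundary where $L$ degenerates, or polynomials must form a core for the self-adjoint realization of $L$. Your final density step also needs polynomials to be dense in the weighted $H^1$ space, including $C^1$ fields that may blow up at $\partial\Omega$. None of this is standard in this degenerate setting.

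The paper sidesteps all of it. It proves the inequality on smooth bounded convex $\Omega'$ with $\overline{\Omega'}\subset\Omega$, where $W$ is uniformly positive definite, and solves a Neumann problem there by Lax--Milgram and elliptic regularity. It then uses a Reilly-type identity whose boundary term $\int_{\partial\Omega'}\sum_{a,b}S_{ab}(\partial_{\tau_a}G)^\top W(\partial_{\tau_b}G)\,dS$ is nonnegative by convexity. Finally it exhausts $\Omega$ by dominated convergence, so nothing about the weight near $\partial\Omega$ is needed beyond integrability.
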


Lemma~\ref{lem:cofactor-poincare} is a special case of the following
matrix-weighted analogue of the classical curvature criterion for
Poincar\'e inequalities. In the scalar setting, a uniform lower
curvature bound yields a Poincar\'e inequality via the Bochner
argument; the lemma below shows that the same mechanism extends to
matrix-valued weights, with the scalar curvature bound replaced by the
corresponding block-matrix inequality. Its proof, given in
Appendix~\ref{app:curvature-criterion}, proceeds on smooth bounded
convex subdomains compactly contained in $\Omega$, with Neumann boundary
conditions, and then exhausts $\Omega$. In this way no assumption on the
behaviour of the weight near $\partial\Omega$ is needed, and no
restriction on the support of the test function.

\begin{lemma}[Matrix-weighted curvature criterion]
\label{lem:matrix-weighted-curvature}
Let $\Omega\subseteq\R^n$ be a nonempty open convex set, and let
$W:\Omega\to\Sym_r^{++}(\R)$ be a $C^\infty$ matrix weight such that
$\int_\Omega W(x)\,dx$ is finite (and hence positive definite). Define
\[
\Theta_{ij}(W)
=
(\partial_iW)W^{-1}(\partial_jW)-\partial_{ij}W,
\qquad 1\le i,j\le n.
\]
Suppose that there is some $\lambda>0$ such that for all $x\in\Omega$ and
all $z_1,\dots,z_n\in\R^r$ there holds,
\begin{equation}
\nlsum_{i,j=1}^n
z_i^\top\Theta_{ij}(W)z_j
\ge
\lambda
\nlsum_{i=1}^n z_i^\top Wz_i.
\label{eq:abstract-curvature-gap}
\end{equation}
Then, every $F\in C^1(\Omega;\R^r)$ with $\int_\Omega F^\top WF\,dx<\infty$
satisfies
\begin{align}
&\int_\Omega F^\top WF\,dx
-
\Bigl(\int_\Omega WF\,dx\Bigr)^\top
\Bigl(\int_\Omega W\,dx\Bigr)^{-1}
\Bigl(\int_\Omega WF\,dx\Bigr)
\nonumber\\
&\hspace{25mm}
\le
\lambda^{-1}
\int_\Omega
\nlsum_{i=1}^n
(\partial_iF)^\top W(\partial_iF)\,dx,
\label{eq:abstract-matrix-poincare}
\end{align}
where the right-hand side may be $+\infty$. (Note that $\int_\Omega WF\,dx$
converges absolutely, since
$\norm{WF}_2\le\norm{W}_{\mathrm{op}}^{1/2}(F^\top WF)^{1/2}$.)
\end{lemma}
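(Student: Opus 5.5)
We prove Lemma~\ref{lem:matrix-weighted-curvature} by the matrix analogue of the Bochner argument. We work on the Hilbert space $L^2(W)$ of $\R^r$-valued maps with inner product $\langle F,G\rangle_W=\int_\Omega F^\top WG\,dx$. The left-hand side of \eqref{eq:abstract-matrix-poincare} is exactly $\norm{F-\Pi F}_W^2$, where $\Pi$ is the $W$-orthogonal projection onto constant vectors; indeed $\Pi F=(\int W)^{-1}\int WF$. So the statement is a spectral gap for the Dirichlet form $\mathcal D(F)=\int\sum_i(\partial_iF)^\top W\partial_iF$ above its kernel (the constants).

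First reduce to a nice domain. Exhaust $\Omega$ by smooth bounded convex sets $\Omega_k\Subset\Omega$ with $\Omega_k\uparrow\Omega$. On $\overline{\Omega_k}$, $W$ is smooth and uniformly positive definite, so the operator $LF=-W^{-1}\sum_i\partial_i(W\partial_iF)$ with Neumann conditions $\nu^\top W\partial F=0$ (equivalently $\partial_\nu F=0$) is self-adjoint with compact resolvent in $L^2(W;\Omega_k)$, and its kernel is the constants. It suffices to show that every eigenfunction $LF=\mu F$ orthogonal to the constants has $\mu\ge\lambda$. Then we pass to the limit: the truncated inequality holds for $F|_{\Omega_k}$, and monotone and dominated convergence (the integrals of $W$, $WF$, $F^\top WF$ converge by the integrability assumptions, and $\int_{\Omega_k}W\to\int_\Omega W\succ0$) yield \eqref{eq:abstract-matrix-poincare}. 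If the right-hand side is infinite there is nothing to prove.

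The core step is a Bochner identity. Set $G_i=W\partial_iF$, so that $\sum_i\partial_iG_i=-\mu WF$. Differentiating gives $\sum_i\partial_j\partial_iG_i=-\mu\,\partial_j(WF)$. Pair this with $\partial_jF=W^{-1}G_j$ and integrate by parts twice. Expanding $\partial_i(W^{-1}G_j)$ produces terms $\partial_iW^{-1}$, and since $\partial_{ij}F=\partial_i(W^{-1}G_j)$, the computation should assemble into
\begin{align*}
\mu\,\mathcal D(F)=\int\sum_{i,j}\norm{W^{1/2}\partial_{ij}F}^2\,dx+\int\sum_{i,j}(\partial_iF)^\top\Theta_{ij}(W)(\partial_jF)\,dx+\text{boundary terms}.
\end{align*}
Here $\Theta_{ij}=(\partial_iW)W^{-1}(\partial_jW)-\partial_{ij}W$ appears exactly as the "Hessian of $-\log W$" in the matrix sense. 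We need to keep the mixed terms $(\partial_jW)\partial_iF$ symmetric so that they combine into $\Theta$; a cleaner route may be to verify the identity $\partial_{ij}F$-term $+\Theta$-term $=$ a complete square. Applying \eqref{eq:abstract-curvature-gap} with $z_i=\partial_iF$ and dropping the nonnegative Hessian term gives $\mu\mathcal D\ge\lambda\mathcal D$, hence $\mu\ge\lambda$ when $\mathcal D(F)>0$.

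The main obstacle is the boundary term. As in the scalar Reilly formula, with Neumann data the boundary contribution is $\int_{\partial\Omega_k}\mathrm{II}(\nabla F,\nabla F)_W\,d\sigma$, a weighted second fundamental form, which is nonnegative by convexity of $\Omega_k$. I would check this by writing the boundary integrand as $\sum_j (\partial_jF)^\top W\,\partial_\nu\partial_jF$ and using $\partial_\nu F=0$ to convert the normal derivative into a curvature term. Regularity of eigenfunctions up to the boundary is needed here; standard elliptic theory for this strongly elliptic system on a smooth domain supplies it. Alternatively, one could avoid eigenfunctions and use the semigroup $\Gamma_2$ approach with the same identity, but the spectral route seems most direct.
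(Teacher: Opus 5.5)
Your proposal is correct and follows essentially the paper's route. Both arguments exhaust $\Omega$ by smooth bounded convex subdomains, prove a matrix Bochner--Reilly identity for Neumann fields whose boundary term is the second fundamental form (nonnegative by convexity), invoke elliptic regularity up to the boundary, and pass to the limit by dominated convergence. The differences are cosmetic: you extract the gap from eigenfunctions of the Neumann operator via compact resolvent, whereas the paper solves $\mathcal L G=\Phi$ by Lax--Milgram and argues by duality. Your double integration by parts does close up, since the stray terms $\mu\int\sum_j(\partial_jF)^\top(\partial_jW)F\,dx$ cancel, giving the same identity as the paper's commutator formula $[\partial_j,\partial_i^*]=W^{-1}\Theta_{ji}(W)$.
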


\subsection{Dimension-free Lower Bound for the Ratio of Partition Functions}
\label{sec:dim-free}
In this section, we establish the following dimension-free lower bound for ratios of partition functions of the form given in \eqref{eq:one-step-ratio}. Then in Section \ref{sec:proofsmallball} we use it to deduce Theorem \ref{thm:matrixsmall}.

\begin{theorem}[Dimension-free lower bounds on ratio of partition functions]\label{thm:rsc} Let $A_1,\dots,A_n$ be symmetric $d \times d$ matrices
satisfying $\nlsum_{i=1}^nA_i^2\preceq I_d.$
Fix a unit vector $v\in\R^d$ and consider the block-decomposition of the matrices $A_i$ with respect to $\R^d=\operatorname{span}\{v\}\oplus v^\perp$
\[
A_i=
\begin{pmatrix}
a_i&u_i^\top\\
u_i&D_i
\end{pmatrix}.
\]
Let $g\in\R^n$ and denote $a(g)=\frac14\nlsum_i g_ia_i$, $b(g)=\frac14\nlsum_i g_iu_i$, and $C(g)=\frac14\nlsum_i g_iD_i$. Then, consider the tilted Gaussian measure 
\[
d\mu_0(g)=\frac{w_0(g)}{\E w_0}\,d\gamma_n(g),\quad\text{where}\quad
w_0(g)=
(1-a(g)^2)^4
\det(I_{d-1}-C(g)^2)^4
\one_{\Omega_0}(g),
\]
and the set $\Omega_0=\{g:|a(g)|<1,\ \opnorm{C(g)}<1\}$. Moreover, for $g\in\Omega_0$, define
\begin{equation}\label{eq:schur-psi}
\xi_+(g)=\frac{b(g)^\top(I+C(g))^{-1}b(g)}{1+a(g)},
\qquad
\xi_-(g)=\frac{b(g)^\top(I-C(g))^{-1}b(g)}{1-a(g)}.
\end{equation}
Then, we have that
\begin{equation}\label{eq:RSC}
\E_{\mu_0}
(1-\xi_+)_{+}^{4}(1-\xi_-)_{+}^{4}
\ge c_{\mathrm{Schur}},
\end{equation}
where
\begin{equation}\label{eq:rsc-constant}
c_{\mathrm{Schur}}
=
\Bigl[
1-\frac18(\Lambda^2+\Lambda^{-2})
\Bigr]^8,
\qquad
\Lambda=8-4\sqrt3.
\end{equation}
\end{theorem}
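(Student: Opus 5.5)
The plan is to reduce \eqref{eq:RSC} to a bound on the first moments $\E_{\mu_0}\xi_\pm$, and to obtain those moments from the matrix-weighted Poincar\'e inequality, Lemma~\ref{lem:cofactor-poincare}. The shape of $c_{\mathrm{Schur}}=[1-\tfrac18(\Lambda^2+\Lambda^{-2})]^8$ suggests that the final step is Jensen's inequality for a convex function of the form $t\mapsto(1-t)_+^8$, applied to a single scalar statistic $T$ satisfying $\E_{\mu_0}T\le\tfrac18(\Lambda^2+\Lambda^{-2})$. I would therefore look for a pointwise inequality
\[
(1-\xi_+)_+^4(1-\xi_-)_+^4\ \ge\ (1-T)_+^8
\]
with $T$ an explicit combination of $\xi_+$ and $\xi_-$. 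Here $\Lambda$ is a free balancing parameter in a Young-type splitting $2xy\le\Lambda^2x^2+\Lambda^{-2}y^2$. This splitting would be used to compare the two Schur terms, or the $(1\pm a)$ and $(I\pm C)$ factors, against a common quadratic form in $b$. The specific value $\Lambda=8-4\sqrt3$ would come out of optimizing the resulting constant in the last step.

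The core step is the moment bound. Since $b=\frac14\sum_i g_iu_i$ is linear in $g$, I would take the test function $F(g)=b(g)$, or its lift $(0,b)$ in the $\operatorname{span}\{v\}\oplus v^\perp$ splitting. The natural weight is
\[
\mathcal K_\pm=\det(H_\pm)^4H_\pm^{-1}\one_\Omega,
\qquad
H_\pm=\begin{pmatrix}1\pm a&0\\0&I\pm C\end{pmatrix},
\]
which is affine in $g$ and hence fits Lemma~\ref{lem:cofactor-poincare}. The extra factor $\det(I\mp B_0)^4$ needed to reproduce $w_0$ is log-concave and can be absorbed by the final clause of that lemma. With these choices, $F^\top\mathcal K F$ is exactly $w_0$ times $b^\top(I\pm C)^{-1}b$, while $(1\pm a)^{-1}$ is handled through the scalar block. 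So $\E_{\mu_0}\xi_\pm$ becomes the left-hand side of \eqref{eq:cofactor-poincare} plus a mean correction.

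The derivative side is constant: $\partial_iF=u_i/4$. The right-hand side is therefore $\frac1{16}\sum_i u_i^\top\E[\mathcal K]u_i$, and this can be bounded by using $\sum_iA_i^2\preceq I$. Reading off the blocks of that constraint gives
\[
\sum_i\bigl(a_i^2+\|u_i\|^2\bigr)\le1,
\qquad
\sum_iu_iu_i^\top\preceq I-\sum_iD_i^2 .
\]
Either of these controls $\sum_i u_i^\top M u_i$ in terms of $\Tr$ or $\opnorm{\cdot}$ of $\E_{\mu_0}$ of a cofactor-type matrix. Crucially, this bound does not grow with $d$.

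The mean term $\int\mathcal K F\,d\gamma_n$ should be handled by symmetry: $w_0$ is even under $g\mapsto-g$, but $\mathcal K_\pm$ is not. The plan is to pair the $+$ and $-$ cases, where $\mathcal K_+(-g)$ is tied to $\mathcal K_-(g)$. The odd parts then cancel or reduce to a cross term, and that cross term is where the Young parameter $\Lambda$ enters.

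I expect the main obstacle to be converting $\E\,\mathcal K$ back into a quantity normalized by $\E w_0$ with a dimension-free constant. The weight $\det(H)^4H^{-1}$ has an extra inverse factor, and we need something like $\E[\det(\cdot)^4(I\pm C)^{-1}]\preceq c\,\E w_0\cdot I$ in the relevant directions. First I would try to bound $(I\pm C)^{-1}$ by using $\det(I\mp C)$ factors from $w_0$, via $\det(I-C^2)(I+C)^{-1}\preceq\det(I-C^2)\,\cdot$(cofactor). Failing that, I would apply the Poincar\'e inequality a second time to the matrix moment. The choice $R=\alpha=4$ is presumably what makes the resulting geometric-type series close with the constant $\tfrac18(\Lambda^2+\Lambda^{-2})<1$. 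The final step would then combine Jensen with this moment bound to give \eqref{eq:RSC}.
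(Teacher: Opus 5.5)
Your outline has several of the right ingredients: a Poincar\'e inequality from Lemma~\ref{lem:cofactor-poincare} with a test field linear in $b$, a Dirichlet term controlled by $\nlsum_iu_iu_i^\top$, parity for the mean term, and Jensen for $t\mapsto(1-t)_+^8$. It does not close, however, for two concrete reasons.

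First, the quantity you bound is not the one you need. With $F=(0,b)$ and block-diagonal $H_\pm=I\pm B_0$, the scalar block never interacts with $F$. So $F^\top H_\pm^{-1}F=b^\top(I\pm C)^{-1}b=p_\pm$, not $\xi_\pm=p_\pm/(1\pm a)$. Nothing in your plan controls the factor $(1\pm a)^{-1}$, which is correlated with $b$ and $C$ through $g$. The paper never bounds $\xi_\pm$. It multiplies out, $(1\pm a)(1-\xi_\pm)=1\pm a-p_\pm$, and uses $Z_{\mathrm{pin}}\le Z_C$ to discard the weight $(1-a^2)^4\le1$ and work under the $C$-only measure $\nu$. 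It then applies the pointwise bound $(1+a-p_+)_+^4(1-a-p_-)_+^4\ge(1-|a|-2q)_+^8$ with $q=b^\top(I-C^2)^{-1}b$. Finally it controls $\E_\nu|a|\le(\nlsum_ia_i^2)^{1/2}\le\sqrt{1/16-\sigma}$ separately by Brascamp--Lieb (with the factor $\frac14$ absorbed into $a_i,u_i,D_i$).

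Second, the step you flag as the main obstacle is the heart of the proof and remains open. Everything hinges on the dimension-free bound $S=\E_\nu(I-C^2)^{-1}\preceq\Lambda I$, and your first suggestion cannot give it. Trading a determinant factor pointwise against the inverse (e.g.\ $\det(I-C^2)(I+C)^{-1}\preceq2I$) only bounds $\E[\det(I-C^2)^4(I+C)^{-1}]$ by $2\,\E\det(I-C^2)^3$. The ratio $\E\det(I-C^2)^3/\E\det(I-C^2)^4$ is exponential in $d$ already for diagonal $C$ with independent entries.

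What closes the argument is a self-consistent matrix inequality from a second application of the Poincar\'e inequality. The paper lifts to $H=\begin{pmatrix}I&C\\C&I\end{pmatrix}$, so that $\det H=\det(I-C^2)$ and $H^{-1}=\begin{pmatrix}G&-CG\\-CG&G\end{pmatrix}$ with $G=(I-C^2)^{-1}$. The test field $(Ce,0)$ then gives $I-S^{-1}\preceq\nlsum_iD_iSD_i\preceq\frac1{16}\opnorm{S}\,I$, i.e.\ $L^2-16L+16\ge0$ for $L=\opnorm{S}$. A continuity argument along $C\mapsto tC$, starting from $L_0=1$, excludes the upper root and leaves $L\le8-4\sqrt3$. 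Your block-diagonal weight would yield the same inequality with test field $(0,Ce)$, but the branch selection is indispensable.

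This is where $\Lambda$ comes from; it is not a free Young parameter. Optimizing $\Lambda^2+\Lambda^{-2}$ over a free parameter would force $\Lambda=1$. Likewise, the mean correction is handled not by pairing $\pm$ but by parity and Cauchy--Schwarz. With test field $(b,0)$, $m=\E_\nu[CGb]$ satisfies $m^\top S^{-1}m\le(1-\Lambda^{-1})\E_\nu q$, whence $\E_\nu q\le\Lambda\Tr(US)\le\Lambda^2\sigma$.

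Finally, $\frac18(\Lambda^2+\Lambda^{-2})=\max_{0\le s\le1/16}\bigl(\sqrt{1/16-s}+2\Lambda^2s\bigr)$. This is the worst-case trade-off between $\E_\nu|a|$ and $2\E_\nu q$ under $\nlsum_ia_i^2+\sigma\le\frac1{16}$. It is another sign that the $a$-dependence must be controlled on its own rather than folded into $\xi_\pm$.
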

Before we prove Theorem~\ref{thm:rsc} we need to develop some supporting material. Toward that end, the main idea is to apply the matrix-weighted Poincar\'e inequality of Section \ref{sec:matrix-poincare} to  quantities arising from the block decomposition of $B(g) = \nlsum_i g_iA_i$. We begin by introducing the quantities that will consider in the following. It will be convenient to absorb the factor 1/4 into the coefficient matrices. Thus, with a slight abuse of notation, we write
\[
B(g)=\nlsum_i g_iA_i
=
\begin{pmatrix}
a(g)&b(g)^\top\\
b(g)&C(g)
\end{pmatrix},
\]
and
\[
A_i=
\begin{pmatrix}
a_i&u_i^\top\\
u_i&D_i
\end{pmatrix},
\qquad
U=\nlsum_i u_iu_i^\top,
\qquad
\sigma=\Tr U=\nlsum_i\norm{u_i}^2.
\]
Since $\nlsum_iA_i^2\preceq\frac1{16}I_d$, we moreover have 
\begin{equation}\label{eq:scaled-top-variance}
\nlsum_i a_i^2+\sigma\le\frac1{16}, \qquad U+\nlsum_iD_i^2\preceq\frac1{16}I_{d-1}, \qquad 0\le\sigma\le\frac1{16}.
\end{equation}

The remainder of the argument is organized around the lower-right block
$C$. The Schur-complement terms arising from the off-diagonal block $b$
involve the resolvent $(I-C^2)^{-1}$, averaged under a Gaussian measure tilted  by $\det(I-C(g)^2)^4
\one_{\{\opnorm{C(g)}<1\}}$. We therefore first isolate this
measure and establish a dimension-free bound on the corresponding
averaged resolvent. More precisely, we consider the tilted Gaussian measure 
\begin{equation}
d\nu(g)
=
\frac{
\det(I-C(g)^2)^4
\one_{\{\opnorm{C(g)}<1\}}
}{Z_C}
\,d\gamma_n(g),
\label{eq:C-only-measure}
\end{equation}
where $Z_C
=
\E\bigl[
\det(I-C(g)^2)^4
\one_{\{\opnorm{C(g)}<1\}}
\bigr]$
 denotes the partition function associated with the $C$-block. Under this measure, we set
\begin{equation} \label{eq:def-S}
    G(g)=(I-C(g)^2)^{-1},
\qquad
S=\E_\nu G.
\end{equation}
Thus $S$ is the averaged resolvent of the lower-right block under this
tilted Gaussian measure. Since $G\succeq I$ holds pointwise, we immediately
have $S\succeq I.$ The key point is that $S$ also admits a dimension-free upper bound. The matrix-weighted Poincaré inequality  will precisely allow us to derive such a dimension-free upper bound: with a suitable test function, its weighted variance reduces to $I-S^{-1}$, while its Dirichlet term can be bounded using the variance normalization on the coefficient matrices. This will then yield a closed matrix inequality for $S$.
\begin{proposition}[Dimension-free averaged resolvent]
\label{prop:S-bound}
Let $S$ be defined as in \eqref{eq:def-S}, then we have 
\[
I\preceq S\preceq\Lambda I,
\qquad
\Lambda=8-4\sqrt3.
\]
\end{proposition}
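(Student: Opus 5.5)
The plan is to apply the matrix-weighted Poincar\'e inequality of Lemma~\ref{lem:cofactor-poincare} with a block-diagonal affine matrix $H$. Its weight should reproduce the tilt $\det(I-C^2)^4$, and the test function should be chosen so that the weighted variance becomes $I-S^{-1}$. Concretely, take $r=2(d-1)$ and
\[
H(g)=\begin{pmatrix} I-C(g)&0\\0&I+C(g)\end{pmatrix}
=H_0+\nlsum_i g_iE_i,\qquad E_i=\begin{pmatrix}-D_i&0\\0&D_i\end{pmatrix}.
\]
With this choice $\Omega=\{H\succ0\}=\{\opnorm{C}<1\}$ and $\det H=\det(I-C^2)$. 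The weight is therefore
\[
\mathcal K=\det(I-C^2)^4\,\diag\bigl((I-C)^{-1},(I+C)^{-1}\bigr)\one_{\{\opnorm C<1\}},
\]
which after normalization by $Z_C$ is exactly a matrix weight against $\nu$. The symmetry $g\mapsto-g$ sends $C\mapsto -C$ and preserves $\nu$. Hence $\E_\nu(I-C)^{-1}=\E_\nu(I+C)^{-1}$, and since $G=\tfrac12[(I-C)^{-1}+(I+C)^{-1}]$, both equal $S$. So $\int\mathcal K\,d\gamma_n=Z_C\,\diag(S,S)$.

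For fixed $z\in\R^{d-1}$, I will take the polynomial test function $F(g)=H(g)\binom{z}{z}=\binom{(I-C)z}{(I+C)z}$. Then the quantities in \eqref{eq:cofactor-poincare} are
\[
\mathcal KF=\det(I-C^2)^4\binom{z}{z},\qquad F^\top\mathcal KF=2\det(I-C^2)^4\norm{z}^2.
\]
After dividing by $Z_C$, the left-hand side of \eqref{eq:cofactor-poincare} becomes
\[
2\norm z^2-\binom zz^\top\diag(S^{-1},S^{-1})\binom zz=2\,z^\top(I-S^{-1})z.
\]
The derivatives are $\partial_iF=E_i\binom zz=\binom{-D_iz}{D_iz}$. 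Hence the Dirichlet term, divided by $Z_C$, is
\[
\E_\nu\nlsum_i\bigl[z^\top D_i(I-C)^{-1}D_iz+z^\top D_i(I+C)^{-1}D_iz\bigr]=2\nlsum_i z^\top D_iSD_iz,
\]
using again $G=\tfrac12[(I-C)^{-1}+(I+C)^{-1}]$. This yields the closed matrix inequality
\[
I-S^{-1}\preceq\nlsum_i D_iSD_i\preceq\opnorm S\nlsum_iD_i^2\preceq\tfrac{\opnorm S}{16}I,
\]
where the last step uses \eqref{eq:scaled-top-variance}. Testing against a top eigenvector of $S$ and writing $s=\opnorm S\ge1$ gives $1-1/s\le s/16$, that is, $s^2-16s+16\ge0$. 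So either $s\le 8-4\sqrt3=\Lambda$ or $s\ge 8+4\sqrt3$. The lower bound $S\succeq I$ is immediate from $G\succeq I$.

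The main obstacle is excluding the large branch $s\ge8+4\sqrt3$, since the quadratic inequality alone does not do it. My first attempt is a continuity argument. Replace $A_i$ by $tA_i$ for $t\in[0,1]$; the hypotheses, and hence the dichotomy, persist for every $t$. The map $t\mapsto s(t)=\opnorm{S_t}$ is continuous by dominated convergence, since the weights are bounded by $1$ and $Z_C(t)>0$ stays bounded away from $0$ on compact $t$-ranges; some care is needed near $\opnorm{C}=1$, where the resolvent blows up but the weight $\det(I-C^2)^4$ kills it. At $t=0$ we have $S=I$, so $s(0)=1<\Lambda$, and continuity forbids crossing the gap $(\Lambda,8+4\sqrt3)$. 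Hence $s(1)\le\Lambda$.

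I would also double-check two side conditions. First, $\int F^\top\mathcal KF\,d\gamma_n<\infty$, which is automatic because $F$ is polynomial. Second, the normalization of Lemma~\ref{lem:cofactor-poincare} indeed carries constant $1$ here: the Gaussian factor supplies curvature $1$, and the $\det(H)^4H^{-1}$ part should contribute nonnegative curvature in the criterion of Lemma~\ref{lem:matrix-weighted-curvature}.
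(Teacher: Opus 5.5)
Your proof is correct and is essentially the paper's argument. Your lift $\diag(I-C,I+C)$ is the paper's $\begin{pmatrix} I & C\\ C & I\end{pmatrix}$ conjugated by the constant orthogonal matrix $\frac{1}{\sqrt2}\begin{pmatrix} I & I\\ -I & I\end{pmatrix}$, and in those coordinates your test field $H\binom{z}{z}$ becomes $\sqrt2\binom{z}{Cz}$; up to an additive constant and the block-swap symmetry, this is the paper's $\binom{Ce}{0}$, giving the same inequality $I-S^{-1}\preceq\nlsum_i D_iSD_i$ and the same continuity-in-$t$ exclusion of the large branch (your use of the fixed gap $(\Lambda,8+4\sqrt3)$ for all $t\in[0,1]$ is valid and slightly simpler than the paper's $t$-dependent branches).
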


We next turn to the contribution of the off-diagonal block $b$.
Recall that the two quadratic terms arising from the Schur-complement
representation in \eqref{eq:schur-psi} involve the resolvents
$(I+C)^{-1}$ and $(I-C)^{-1}$. Now, since $(I+C)^{-1}+(I-C)^{-1}
=
2(I-C^2)^{-1}$,
it is natural to introduce
\begin{equation}
q(g)
=
b(g)^\top G(g)b(g),
\qquad
G(g)=(I-C(g)^2)^{-1}.
\label{eq:q-definition}
\end{equation}
Controlling $\E_\nu q$ will therefore yield a simultaneous bound on the
two Schur-complement contributions appearing in
\eqref{eq:schur-psi}. Note, that Proposition~\ref{prop:S-bound} provides a uniform bound on the averaged
resolvent $S=\E_\nu G$, but this does not immediately yield a bound on
$\E_\nu[b^\top G b]$, since $b$ and $G$ are correlated. The next proposition controls this dependence. 

The natural scale for such an estimate is $\sigma=\nlsum_{i=1}^n\norm{u_i}^2=\Tr U$.  Indeed, since $b(g)=\nlsum_i g_i u_i$, we have $\E_{\gamma_n}\norm{b(g)}_2^2=\sigma$, so that $\sigma$ corresponds the Gaussian second-moment scale of the
off-diagonal block. Moreover, as the proof below shows, $\sigma$ also arises naturally from the Dirichlet term in the matrix-weighted Poincaré inequality.

\begin{proposition}[Resolvent-weighted quadratic-form bound]
\label{prop:q-bound}
Let $q$ be defined as in equation \eqref{eq:q-definition} and let $\sigma=\nlsum_{i=1}^n\norm{u_i}^2$. Then, we have
\begin{equation}
\E_\nu q
\le
\Lambda^2\sigma,
\label{eq:q-bound}
\end{equation}
where $\Lambda=8-4\sqrt3.$
\end{proposition}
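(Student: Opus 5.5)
We prove \eqref{eq:q-bound} by applying the matrix-weighted Poincar\'e inequality, Lemma~\ref{lem:cofactor-poincare}, a second time. The weight is the one attached to the $C$-block, and the test function is built from $b$. The key observation is that $b(g)=\nlsum_i g_iu_i$ is linear in $g$, so its derivatives are the constant vectors $\partial_i b=u_i$. Consequently the Dirichlet term will only see $\sigma$, weighted by the averaged resolvent that Proposition~\ref{prop:S-bound} already controls.

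\textbf{Setup.} Take $r=2(d-1)$ and $H(g)=\diag(I-C(g),\,I+C(g))$, which is affine in $g$ and positive definite exactly when $\opnorm{C}<1$. Then
\[
\det H=\det(I-C^2),\qquad \mathcal K=\det(I-C^2)^4\diag\bigl((I-C)^{-1},(I+C)^{-1}\bigr)\one_{\{\opnorm C<1\}}.
\]
The test function is $F(g)=(b(g),b(g))$, so that
\[
F^\top\mathcal KF=\det(I-C^2)^4\, b^\top\bigl[(I-C)^{-1}+(I+C)^{-1}\bigr]b=2\det(I-C^2)^4\,q .
\]
Dividing by $Z_C$, the left-hand quadratic term of \eqref{eq:cofactor-poincare} equals $2\E_\nu q$. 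The Dirichlet term becomes
\[
\nlsum_i 2\,u_i^\top \E_\nu[G]\,u_i=2\Tr(SU)\le 2\Lambda\sigma,
\]
using Proposition~\ref{prop:S-bound}.

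\textbf{Mean term.} It remains to handle the subtracted term $m^\top M^{-1}m$, where $m=\int\mathcal KF$ and $M=\int\mathcal K$. This term is nonnegative, so discarding it goes in the wrong direction. We must show it is itself $O(\sigma)$, or that it is a fixed fraction of $\E_\nu q$.

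\emph{Option 1: symmetry.} Under $g\mapsto-g$ we have $b\mapsto-b$ and $C\mapsto-C$, so the two diagonal blocks of $\mathcal K$ swap. This yields the relations $\E_\nu[(I-C)^{-1}b]=-\E_\nu[(I+C)^{-1}b]$. Choosing instead $F=(b,-b)$ (or a sign combination) should make $m=0$, because the symmetry then kills the mean. With $F=(b,-b)$ we still get $F^\top\mathcal KF\propto q$, and the Dirichlet term is unchanged. If the mean vanishes, we obtain $\E_\nu q\le\Lambda\sigma$, which is even stronger than claimed.

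\emph{Option 2: fallback.} If the symmetry does not kill the mean, we bound it by Cauchy--Schwarz in the $M^{-1}$ geometry,
\[
m^\top M^{-1}m\le \theta\,\E F^\top\mathcal KF+\dots
\]
Alternatively, we apply Poincar\'e to $F=(\Lambda^{1/2}\dots)$ and solve the resulting quadratic inequality in $\sqrt{\E_\nu q}$. A quadratic inequality of this type is the likely source of the $\Lambda^2$ in the stated bound, as opposed to $\Lambda$.

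\textbf{Main obstacle.} The main obstacle is exactly this mean term. The Dirichlet computation is routine once Proposition~\ref{prop:S-bound} is available. Controlling $m^\top M^{-1}m$ requires exploiting the $g\mapsto -g$ symmetry of the $C$-weighted measure, or an iterative bootstrap in which $M^{-1}$ is compared to $S$ using $S\preceq\Lambda I$. Each use of that comparison plausibly costs one factor of $\Lambda$, which would account for the final $\Lambda^2\sigma$.
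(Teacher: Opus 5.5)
\textbf{The gap is the mean term.} Your setup is correct, and it is the paper's argument in a rotated basis. The weight $\diag(I-C,I+C)$ is orthogonally congruent to the lift $\bigl(\begin{smallmatrix} I&C\\ C&I\end{smallmatrix}\bigr)$ used in the paper, and your $F=(b,b)$ corresponds to $\sqrt2\,(b,0)$ there. Your quadratic term $2\E_\nu q$ and Dirichlet term $2\Tr(US)\le 2\Lambda\sigma$ are both right.

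\emph{Option~1 is false.} Write $(I\mp C)^{-1}b=Gb\pm CGb$. Under $g\mapsto-g$, $Gb$ is odd and $CGb$ is even. Hence
\[
\E_\nu[(I-C)^{-1}b]=-\E_\nu[(I+C)^{-1}b]=w:=\E_\nu[CGb],
\]
and $\E_\nu\diag\bigl((I-C)^{-1},(I+C)^{-1}\bigr)=\diag(S,S)$. So the normalized mean is $(w,-w)$ for $F=(b,b)$ and $(w,w)$ for $F=(b,-b)$. Neither vanishes, because $CGb$ is even and generically has nonzero mean. This already happens for $n=1$, $C=gD$, $b=gu$ with $Du\neq0$. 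Recentering $F$ by a constant cannot help either, since the left side of \eqref{eq:cofactor-poincare} is a weighted variance. Both sign choices give the same inequality
\[
Q-w^\top S^{-1}w\le\Tr(US),\qquad Q:=\E_\nu q .
\]
The trivial bound $w^\top S^{-1}w\le Q$ (nonnegativity of the variance) yields nothing from this. Option~2 does not close the gap: your display with $\theta$ and ``$\dots$'' specifies neither $\theta$ nor why $\theta<1$.

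\textbf{The missing estimate.} You need to show that the mean correction is at most the fixed fraction $1-1/\Lambda$ of $Q$. For $z\in\R^{d-1}$, since $C$ and $G$ commute and $C^2G=G-I$, Cauchy--Schwarz gives
\[
(z^\top w)^2=\bigl(\E_\nu\langle G^{1/2}Cz,G^{1/2}b\rangle\bigr)^2\le\E_\nu[z^\top C^2Gz]\;\E_\nu[b^\top Gb]=Q\,z^\top(S-I)z .
\]
Thus $ww^\top\preceq Q(S-I)$. Conjugating by $S^{-1/2}$ and using $S\preceq\Lambda I$ from Proposition~\ref{prop:S-bound} gives
\[
w^\top S^{-1}w\le Q\,\opnorm{I-S^{-1}}\le Q\,(1-1/\Lambda).
\]
Substituting, $Q/\Lambda\le\Tr(US)\le\Lambda\sigma$. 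So $\Lambda^2$ does not come from a bootstrap or a quadratic inequality. One factor of $\Lambda$ comes from the Dirichlet term, and the other from the fraction $1/\Lambda$ of $Q$ that survives the mean correction.
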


Finally, we can now combine the preceding estimates to complete the proof of the dimension-free lower bound in Theorem~\ref{thm:rsc}.

\begin{proof}[Proof of Theorem~\ref{thm:rsc}]
Let $Z_{\mathrm{full}}$ and $Z_{\mathrm{pin}}$ be defined as in
\eqref{eq:zfull} and \eqref{eq:zpin}, respectively. We also denote the
partition function associated with the $C$-block by,
\begin{equation*}
Z_C
=
\E\bigl[
\det(I-C(g)^2)^4
\one_{\{\opnorm{C(g)}<1\}}
\bigr].
\end{equation*}
 Moreover, let $\opnorm{C}<1$ and set
\[
p_+
=
b^\top(I+C)^{-1}b,
\qquad
p_-
=
b^\top(I-C)^{-1}b, \qquad q=b^\top(I-C^2)^{-1}b.
\]

Since $(I+C)^{-1}+(I-C)^{-1}
=
2(I-C^2)^{-1}$, we have that $p_++p_-=2q$. Now, using Lemma~\ref{lem:schur-factorization}, we obtain
\begin{equation}
\frac{Z_{\mathrm{full}}}{Z_C}
=
\E_\nu
[
(1+a-p_+)_{+}^{4}
(1-a-p_-)_{+}^{4}
].
\label{eq:full-over-C}
\end{equation}
 Recall that proving \eqref{eq:RSC} is equivalent to showing that $\frac{Z_{\mathrm{full}}}{Z_\mathrm{pin}}\geq c_\text{Schur}$. However, since $(1-a^2)^4\one_{\{|a|<1\}}\le1$, we have that $Z_{\mathrm{pin}}\le Z_C$, and 
it therefore suffices to obtain a uniform lower bound on the right-hand
side of \eqref{eq:full-over-C}.

To this end, we first show that
\begin{equation}
(1+a-p_+)_{+}^{4}(1-a-p_-)_{+}^{4}
\ge
(1-|a|-p_+-p_-)_{+}^{8}.
\label{eq:two-slack-lower}
\end{equation}
Indeed, whenever the expression on the right-hand side of \eqref{eq:two-slack-lower} is nonzero,
\[
1-|a|-p_+-p_-
\le
1+a-p_+,
\qquad
1-|a|-p_+-p_-
\le
1-a-p_-,
\]
whereas if it is zero the inequality is immediate. Moreover, using $p_++p_-=2q$ we get that,
\[
(1+a-p_+)_{+}^{4}(1-a-p_-)_{+}^{4}
\ge
(1-|a|-2q)_+^8.
\]

Since $t\longmapsto(1-t)_+^8$ is convex, Jensen's inequality together with
\eqref{eq:full-over-C} yields
\begin{equation}
\frac{Z_{\mathrm{full}}}{Z_C}
\ge
(
1-\E_\nu|a|-2\E_\nu q
)_+^8.
\label{eq:RSC-Jensen}
\end{equation}
Thus, proving a uniform lower bound on the right-hand
side of \eqref{eq:full-over-C} reduces to controlling the two scalar
quantities $\E_\nu|a|$ and $\E_\nu q$.
For the latter, Proposition~\ref{prop:q-bound} shows that,
\begin{equation}
\E_\nu q\le\Lambda^2\sigma.
\label{eq:q-bound-use}
\end{equation}
It then remains to control $\E_\nu|a|$. The measure $\nu$ is even and
$1$-strongly log-concave on the convex set
$\{\opnorm{C}<1\}$. Indeed, its potential is given by
\[
V(g)
=
\frac12\norm{g}^2
-4\log\det(I+C(g))
-4\log\det(I-C(g)).
\]
Therefore, using that $\nu$ is $1$-strongly log-concave, the Brascamp--Lieb variance inequality~\cite{BrascampLieb1976} yields
\[
\Cov_\nu(g)\preceq I_n.
\]
Moreover,  since $\nu$ is even, we have $\E_\nu g=0$, and therefore
$\E_\nu a^2
=
\Var_\nu(a)
\le
\nlsum_i a_i^2.$
Furthermore, using \eqref{eq:scaled-top-variance}, we obtain that
\begin{equation}
\E_\nu|a|
\le
\sqrt{\E_\nu a^2}
\le
\Bigl(\nlsum_i a_i^2\Bigr)^{1/2}
\le
\sqrt{\frac1{16}-\sigma}.
\label{eq:a-bound}
\end{equation}

Finally, combining \eqref{eq:RSC-Jensen}, \eqref{eq:q-bound-use}, and
\eqref{eq:a-bound}, and using $Z_{\mathrm{pin}}\le Z_C$ , gives
\begin{equation*}
\frac{Z_{\mathrm{full}}}{Z_{\mathrm{pin}}}
\ge
\Bigl(
1-\sqrt{\frac1{16}-\sigma}
-2\Lambda^2\sigma
\Bigr)_+^8.
\end{equation*}

It remains only to optimize the scalar expression on the right. Define
\[
f(s)
=
1-\sqrt{\frac1{16}-s}-2\Lambda^2s,
\qquad
0\le s\le\frac1{16}.
\]
Since $f''(s)
=
\frac{1}{4(1/16-s)^{3/2}}
>0$
the function $f$ is strictly convex. Solving $f'(s)=0$ gives $\sqrt{\frac1{16}-s}
=
\frac{1}{4\Lambda^2}.$
Substituting this identity into $f$ yields $\min_{0\le s\le1/16}f(s)
=
1-\frac18(\Lambda^2+\Lambda^{-2})$, so that

\[
\frac{Z_{\mathrm{full}}}{Z_{\mathrm{pin}}}
\ge
\Bigl[
1-\frac18(\Lambda^2+\Lambda^{-2})
\Bigr]^8
=
c_{\mathrm{Schur}},
\]
which proves Theorem~\ref{thm:rsc}.
\end{proof}

\subsection{Proof of Theorem~\ref{thm:matrixsmall}} \label{sec:proofsmallball}
With Theorem~\ref{thm:rsc} in hand, the matrix small-ball Theorem follows
by iterating the one-step partition-function comparison along successive
rank-one pinchings. Each application reduces the matrix dimension by one
while incurring only a universal constant-factor loss. After $d-1$
iterations, the remaining matrix is diagonal, and the resulting scalar
Gaussian small-ball probability is bounded below using the \v{S}id\'ak
inequality \cite{Sidak1968}.

\begin{proof}[Proof of Theorem~\ref{thm:matrixsmall}]
Fix an orthonormal basis $e_1,\dots,e_d$ of $\R^d$, and successively
apply the pinching maps $\mathcal E_{P_k}(A)
=
P_kAP_k+(I-P_k)A(I-P_k)$ associated with the corresponding rank-one projections $P_k=e_ke_k^\top$
for $k=1,\dots,d-1$. We first note that the variance constraint is
preserved under each such pinching. \footnote{Indeed, if $\mathcal E_P(A)=PAP+(I-P)A(I-P),$
then, for every symmetric matrix $A$ we have that $\mathcal E_P(A)^2
\preceq
\mathcal E_P(A^2)$. To see this, writing $Q=I-P$ gives $\mathcal E_P(A^2)-\mathcal E_P(A)^2
=
PAQAP+QAPAQ
\succeq0.$} Consequently, if $\nlsum_iA_i^2\preceq I_d$, then also $\nlsum_i\mathcal E_P(A_i)^2
\preceq
\mathcal E_P\bigl(\nlsum_iA_i^2\bigr)
\preceq I_d.$
Therefore, the hypothesis of Theorem~\ref{thm:rsc} remains valid at every
stage of the iteration.

Now, applying Theorem~\ref{thm:rsc} successively along
$e_1,\dots,e_{d-1}$, each pinching incurs at most the universal factor
$c_{\mathrm{Schur}}$. After $d-1$ steps all off-diagonal entries have
been removed. Denoting the resulting diagonal matrix by
$A_i^{\mathrm{diag}}$, we therefore obtain
\begin{equation}
\mathcal Z_{4,4}(B)
\ge
c_{\mathrm{Schur}}^{\,d-1}
\mathcal Z_{4,4}(B_{\mathrm{diag}}),
\label{eq:iterated-rsc}
\end{equation}
where
\[
B_{\mathrm{diag}}(g)
=
\frac14\nlsum_i g_iA_i^{\mathrm{diag}}
=
\diag(Z_1,\dots,Z_d).
\]

It then remains to lower bound the partition function $\mathcal Z_{4,4}(B_{\mathrm{diag}})$. To this end, let $f(t)=(1-t^2)_+^4$ and $r(s)=\sqrt{1-s^{1/4}}$, then we obtain  
\[
\mathcal Z_{4,4}(B_{\mathrm{diag}})
=
\E\nlprod_{j=1}^d f(Z_j)= \int_{[0,1]^d}
\Prob\{
|Z_j|<r(s_j),\ 1\le j\le d
\}
\,ds_1\cdots ds_d, 
\]
where in the second equality we used the layer-cake representation and
Tonelli's theorem. Applying the \v{S}id\'ak inequality~\cite{Sidak1968} to the Gaussian
vector $(Z_1,\dots,Z_d)$ inside the integral, we obtain
\begin{align}
\mathcal Z_{4,4}(B_{\mathrm{diag}})
&\ge
\int_{[0,1]^d}
\nlprod_{j=1}^d
\Prob\{|Z_j|<r(s_j)\}
\,ds_1\cdots ds_d
\nonumber\\
&=
\nlprod_{j=1}^d
\int_0^1
\Prob\{|Z_j|<r(s)\}\,ds =\nlprod_{j=1}^d \E f(Z_j).
\label{eq:diagonal-sidak}
\end{align}

Now, since $\nlsum_i(A_i^{\mathrm{diag}})^2 \preceq I$, we have $\Var(Z_j)\le\frac1{16}$ for $ 1\le j\le d$. By using the elementary inequality $$(1-u)_+^4\ge1-4u, \qquad u\ge0,$$ we obtain $\E f(Z_j)
=
\E(1-Z_j^2)_+^4
\ge
1-4\E Z_j^2\ge \frac{3}{4}$, and therefore also
\begin{equation}
\mathcal Z_{4,4}(B_{\mathrm{diag}})
\ge
\Bigl(\frac34\Bigr)^d.
\label{eq:diagonal-partition-lower}
\end{equation}
Combining \eqref{eq:iterated-rsc} and
\eqref{eq:diagonal-partition-lower}, we conclude that
\begin{equation}
    \mathcal Z_{4,4}(B)
\ge
c_{\mathrm{Schur}}^{\,d-1}
\Bigl(\frac34\Bigr)^d.
\label{eq:schur-final}
\end{equation}
Finally, by \eqref{eq:partition-below-prob},
\[
\mathcal Z_{4,4}(B)
\le
\Prob\Bigl\{
\opnorm[\Big]{\nlsum_i g_iA_i}<4
\Bigr\},
\]
and therefore by using \eqref{eq:schur-final} we obtain,
\begin{equation} \label{eq:schur-final-almost}
    \Prob\Bigl\{
\opnorm[\Big]{\nlsum_i g_iA_i}<4
\Bigr\}
\ge
c_{\mathrm{Schur}}^{\,d-1}
\Bigl(\frac34\Bigr)^d.
\end{equation}
Since
\[
c_{\mathrm{Schur}}
=
\Bigl[
1-\frac18(\Lambda^2+\Lambda^{-2})
\Bigr]^8,
\qquad
\Lambda=8-4\sqrt3,
\]
 we then obtain for the right-hand side of \eqref{eq:schur-final-almost}
\[
\Prob\Bigl\{
\opnorm[\Big]{\nlsum_i g_iA_i}<4
\Bigr\}
\ge
e^{-cd},
\]
with
\[
c
=
-8\log\Bigl(\frac{1020\sqrt3-1671}{128}\Bigr)
-\log\Bigl(\frac34\Bigr) \approx 2.614861,
\]
which concludes the proof of the Matrix Small Ball Theorem.
\end{proof}

\section{Proof of the Hereditary Matrix Small Ball Theorem}
\label{sec:hereditary-small-ball}

The goal of this section is to prove Theorem~\ref{thm:mesoscopic-smallball}. The idea of the proof will be similar in spirit to the one we presented in Section \ref{sec:matrixsmall}. We will again reduce proving matrix small-ball estimates to proving lower bounds on certain  weighted partition functions supported on  operator-norm balls.  The mechanism for
obtaining  lower bound in the present setting, however, is different. Rather than iterating rank-one pinchings, we compare the full Gaussian matrix with its diagonal part through a continuous interpolation and control the resulting loss using the matrix-weighted Poincar\'e inequalities developed in Section \ref{sec:matrix-poincare}.

Let $A_1,\dots,A_n$ symmetric $d \times d$ matrices  with $\opnorm{A_i}\le1$ and let $L=1+\log\frac{2d}{n},
\,
R=\kappa\sqrt n\,L^2$.
It will be convenient to  rescale the matrices $A_i$ by setting $M_i=\frac{A_i}{R}$. Let $B(g)=\nlsum_{i=1}^n g_iM_i$.  Since $\norm{A_i}_{\text{op}}\le1$, we have
$\nlsum_{i=1}^nM_i^2
\preceq
\eta I_d$ and $\eta:=\frac1{\kappa^2L^4}.$
With this normalization, the small-ball event in
Theorem~\ref{thm:mesoscopic-smallball} becomes
\[
\Bigl\{\opnorm[\Big]{\nlsum_{i=1}^n g_iA_i} < R\Bigr\}
=\bigl\{\opnorm{B(g)}<1\bigr\}.
\]
Thus, it suffices to prove a lower bound of the order $e^{-c_*n}$ for the
Gaussian measure of this normalized operator-norm unit ball.

To this end, similarly as in Section \ref{sec:matrixsmall}, we will consider suitable smooth surrogate $e^{-\mathcal{V}(B)}$ for the indicator $\one_{\{\opnorm B<1\}}$, satisfying
\[
0\le e^{-\mathcal V(B)}
\le
\one_{\{\opnorm B<1\}},
\]
where $\mathcal V(B)=\Tr v(B)$ is an even convex spectral  potential function which will be specified below in Section \ref{sec:potential}. 
Its associated partition function
\begin{equation}
Z
=
\E_{\gamma_n}\![e^{-\mathcal V(B(g))}],
\label{eq:mesoscopic-partition}
\end{equation}
satisfies then
\[
Z
\le
\Prob\{\opnorm{B(g)}<1\}.
\]
As in Section \ref{sec:matrixsmall}, this simple yet crucial relation between partition functions and matrix
small-ball events is the starting point of our proof of
Theorem~\ref{thm:mesoscopic-smallball}: it reduces the desired Gaussian measure lower bound to establishing a lower bound on $Z$ of the form,
\[
Z\ge e^{-c n}.
\]

Our proof will follow a surprisingly simple strategy. We fix an orthonormal basis and decompose
\[
B(g)
=
B_{\mathrm d}(g)+B_{\mathrm o}(g),
\]
where $B_{\mathrm d}(g)$ is the diagonal part of $B(g)$ and
$B_{\mathrm o}(g)=B(g)-B_{\mathrm d}(g)$ contains its off-diagonal
entries. We interpolate between these two matrices by setting
\begin{equation}
B_t(g)
=
B_{\mathrm d}(g)+tB_{\mathrm o}(g),
\qquad
0\le t\le1.
\label{eq:mesoscopic-interpolation}
\end{equation}
Thus $B_0=B_{\mathrm d}$ is diagonal, whereas $B_1=B$ is the original
Gaussian matrix series.

Moreover, for each $t\in[0,1]$, we let

\begin{equation}\label{eq:zt}
    Z_t
=
\E_{\gamma_n}\![e^{-\mathcal V(B_t(g))}],
\qquad
F(t)=\log Z_t.
\end{equation}
The idea is to first obtain a direct lower bound on the diagonal
partition function $Z_0$, and then show that introducing the
off-diagonal part along the interpolation path does not decrease the
partition function substantially. The quality of this comparison is
governed by the second derivative of $F(t)=\log Z_t,$
so that the main task will be to establish a uniform lower bound on $F''(t)$.

More precisely, we first note that 
\[
F(1)-F(0)
=
\log\frac{Z_1}{Z_0}.
\]
Thus, a lower bound on $F(1)-F(0)$ translates directly into a
multiplicative comparison between the partition functions of the full
matrix and its diagonal part. However, by Taylor's formula we have that,
\[
F(1)
=
F(0)+F'(0)+\int_0^1(1-t)F''(t)\,dt.
\]
Therefore, having good control on first and second order derivatives of the time dependent log partition function $F(t)$, directly transfers to strong multiplicative comparison between the partition functions $Z_0$ and $Z_1$. 
In fact,  we have that $F'(0)=0$, while controlling the second order derivative $F''$ will require substantially more work. 
To see that $F'(0)=0$, we compute the derivative of $F(t)$ 
\[
F'(t) = \frac{Z_t'}{Z_t} = -\frac{
\E_{\gamma_n}\![
D\mathcal V(B_t)[Y]\,
e^{-\mathcal V(B_t)}
]
}{
\E_{\gamma_n}\![
e^{-\mathcal V(B_t)}
]
}
=
-\E_{\mu_t}\![D\mathcal V(B_t)[Y]],
\]
where $Y=B_{\mathrm o}$ and $d\mu_t(g) = Z_t^{-1}e^{-\mathcal V(B_t(g))}\,d\gamma_n(g)$.
Now, since $\mathcal V(X)=\Tr v(X)$ is a spectral trace function, we have that
$D\mathcal V(X)[Y]
=
\Tr\!(v'(X)Y)$.
At $t=0$, the matrix $B_0=B_{\mathrm d}$ is diagonal, and hence
$v'(B_{\mathrm d})$ is diagonal as well. On the other hand,
$B_{\mathrm o}$ has zero diagonal, and therefore
\[
D\mathcal V(B_{\mathrm d})[B_{\mathrm o}]
=
\Tr\!(v'(B_{\mathrm d})B_{\mathrm o})
=
0,
\]
which proves that $F'(0)=0$.

In the remainder, our main goal will be  then to establish the uniform lower bound
\[
F''(t)\ge -C\delta n,
\qquad
0\le t\le1,
\]
for some tunable parameter $\delta$, which then by Taylor's formula would give $F(1) - F(0) \ge -\frac{C\delta n}{2}$, or 
equivalently, $Z_1
\ge
e^{-C\delta n/2}Z_0.$
Consequently, a lower bound of the form
\[
Z_0\ge e^{-C_0 \delta n}
\]
for the diagonal partition function would immediately yield
\[
Z_1\ge e^{-O(\delta n)}
\]
for the full matrix $B$. Thus, the overall goal of the interpolation argument is to show that
introducing the off-diagonal entries decreases the partition function by
at most an exponential factor of order $e^{-O(n)}$.

\subsection{Construction of  Potential Function}
\label{sec:potential}

In this section, we will construct a potential function  of the form $\mathcal V(B)= \Tr v(B)$ for some suitable scalar function $v$, satisfying $0\le e^{-\mathcal V(B)}
\le
\one_{\{\opnorm B<1\}}$. Equivalently, $\mathcal V(B)$ should blow up when $B$ approaches the boundary of the operator-norm ball. Moreover, we will require that $\mathcal V(B)$ has enough convexity to control second order terms appearing in the interpolation argument. At the same time, since $Z_t = \E e^{-\mathcal V(B_t)}$ we cannot  make $\mathcal V(B)$ arbitrarily large: doing so  could make  the $Z_0$ partition function too small.

We now turn to the construction of the spectral potential. A natural
starting point is the scalar logarithmic barrier
\begin{equation*}
\ell(t)
=
-\log(1-t^2),
\qquad |t|<1,
\end{equation*}
which we extend by $+\infty$ outside $(-1,1)$. For a symmetric matrix
$B$, the corresponding spectral potential is
\[
\Tr \ell(B)
=
-\log\det(I-B^2),
\]
whenever $\opnorm{B}<1$. Since, $\ell(t)\longrightarrow+\infty$
as $|t|\uparrow1$, we have that  $e^{-\Tr\ell(B)}
=
\det(I-B^2)$ vanishes as the spectrum of $B$ approaches the boundary of the
operator-norm unit ball. Extending $\ell$ by $+\infty$ outside
$(-1,1)$ therefore ensures that $e^{-\Tr\ell(B)}$ is
supported on the event $\{\opnorm{B}<1\}$. Moreover, $\ell$ is convex, since 
$\ell''(t)
=
\frac{2(1+t^2)}{(1-t^2)^2}
>0$ for $|t|<1.$
Thus the second derivative of the logarithmic barrier grows rapidly as
$t$ approaches either boundary point $\pm1$. Consequently, the Hessian of
the associated matrix potential becomes large in spectral directions
corresponding to eigenvalues close to the boundary of the operator-norm
ball. This is precisely the type of curvature that will later be used
to control the variation of the partition function along the
 aforementioned interpolation path.
However, a naive use of the full logarithmic barrier can make the
partition function too small even for matrices whose eigenvalues lie well
inside $(-1,1)$, although the strong growth of the potential is only
needed near the spectral boundary. In particular, it may
prevent us from obtaining the required lower bound on the diagonal
partition function $Z_0$. This motivates separating the behavior of the logarithmic barrier in the spectral bulk from its behavior close to the boundary. To this end, using the power
series expansion
\begin{equation*}
\ell(t)
=
\nlsum_{k=1}^{\infty}\frac{t^{2k}}{k},
\qquad |t|<1,
\end{equation*}
we define, for an integer $q\ge2$, the degree-$\ge 2q$ tail
\begin{equation*}
r_q(t)
=
\nlsum_{k=q}^{\infty}\frac{t^{2k}}{k}
=
\ell(t)-\nlsum_{k=1}^{q-1}\frac{t^{2k}}{k}.
\end{equation*}
The tail $r_q$ retains the divergence of $\ell$ as $|t|\uparrow1$,
while for values of $|t|$ bounded away from $1$, the tail function $r_q(t)$
remains comparatively small because it only contains higher-order terms. Our idea is therefore to combine the tail $r_q$ with the complementary
low-order part of the logarithmic barrier, assigning different weights
to the two components. This allows us to strengthen the barrier near the
spectral boundary while keeping its effect in the bulk comparatively
small. More precisely, for parameters $\alpha\ge\beta>0$, we define
\begin{equation*}
v_{q,\alpha,\beta}(t)
=
\beta\,\ell(t)
+
(\alpha-\beta)r_q(t).
\end{equation*}
Equivalently,
\begin{equation*}
v_{q,\alpha,\beta}(t)
=
\beta\nlsum_{k<q}\frac{t^{2k}}{k}
+
\alpha\nlsum_{k\ge q}\frac{t^{2k}}{k}.
\end{equation*}
Thus the low-degree part of the logarithmic barrier is weighted only by
$\beta$, whereas the high-degree tail carries the larger coefficient
$\alpha$. The parameters $q$, $\alpha$, and $\beta$ will be chosen later
so that this potential function simultaneously provides the curvature required
for the interpolation argument and ensures that  the diagonal partition
function remains sufficiently large, namely $Z_0\ge e^{-O(n)}$.
Consequently,  we will set 
\begin{equation} \label{eq:potentialfunction}
   \mathcal V:\Sym_d(\R)\to(-\infty,+\infty],\qquad B \longmapsto \mathcal V(B)
=
\begin{cases}
\Tr v_{q,\alpha,\beta}(B), & \opnorm{B}<1,\\[1ex]
+\infty, & \opnorm{B}\ge 1,
\end{cases}
\end{equation}
where $v_{q,\alpha,\beta}(B)$ is defined by functional calculus, and we will assume in the following that $\alpha\ge2\beta$.
In particular, $0\le e^{-\mathcal V(B)}
\le
\one_{\{\opnorm{B}<1\}}$.
Moreover, since $v_{q,\alpha,\beta}$ is even and convex,
$\mathcal V$ is an even convex spectral function.

\subsection{Curvature Bounds}
\label{sec:analyticpotential}
In the following we will derive several useful analytic properties of the spectral potential function $\mathcal V(B)=\Tr v_{q,\alpha,\beta}(B)$ that we defined in \eqref{eq:potentialfunction}. We will begin with computing its
Hessian. To this end, in the lemma below, we record the general formula for Hessians of spectral trace functions.  

\begin{lemma}[Hessian of a spectral trace function]
\label{lem:spectral-hessian}
Let $I\subseteq\mathbb R$ be an open interval, let $f\in C^2(I)$,
and let $B\in\Sym_d(\R)$ have spectrum contained in $I$. Write
$B=U\diag(\lambda_1,\dots,\lambda_d)U^\top$ for the spectral decomposition of $B$.
Then, for every $Y\in\Sym_d(\R)$,
\begin{equation}
D^2\Tr f(B)[Y,Y]
=
\nlsum_{a,b=1}^d
f'^{[1]}(\lambda_a,\lambda_b)
|(U^\top YU)_{ab}|^2,
\label{eq:spectral-hessian}
\end{equation}
where
\[
f'^{[1]}(x,y)
=
\begin{cases}
\dfrac{f'(x)-f'(y)}{x-y},&x\neq y,\\[1ex]
f''(x),&x=y.
\end{cases}
\]
\end{lemma}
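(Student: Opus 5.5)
The plan is to reduce to the diagonal case by unitary invariance and then differentiate the eigenvalues to second order (Daleckii–Krein perturbation theory). Conjugating by $U$, both sides of \eqref{eq:spectral-hessian} are unchanged if we replace $B$ by $\Lambda=\diag(\lambda_1,\dots,\lambda_d)$ and $Y$ by $\tilde Y=U^\top YU$. This holds because $\Tr f(UXU^\top)=\Tr f(X)$ for every symmetric $X$ with spectrum in $I$. So we may assume $B=\Lambda$ is diagonal, and we must compute $\frac{d^2}{ds^2}\Tr f(\Lambda+sY)\big|_{s=0}$.

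The first step is to treat the polynomial case $f(x)=x^m$. Here $\Tr(\Lambda+sY)^m$ is a polynomial in $s$. Its $s^2$ coefficient, using cyclicity of the trace, is
\[
\frac m2\sum_{j+k=m-2}\Tr(\Lambda^jY\Lambda^kY)=\frac m2\sum_{a,b}|Y_{ab}|^2\sum_{j+k=m-2}\lambda_a^j\lambda_b^k .
\]
For $\lambda_a\ne\lambda_b$, the inner sum times $m$ equals $\frac{m\lambda_a^{m-1}-m\lambda_b^{m-1}}{\lambda_a-\lambda_b}=f'^{[1]}(\lambda_a,\lambda_b)$. For $\lambda_a=\lambda_b$ it equals $m(m-1)\lambda_a^{m-2}=f''(\lambda_a)$. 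Doubling the $s^2$ coefficient gives the second derivative, so \eqref{eq:spectral-hessian} holds for all monomials, and by linearity for all polynomials.

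The second step extends this to $f\in C^2(I)$ by approximation, and I expect this to be the main technical point. Fix a compact interval $J\subset I$ whose interior contains the spectrum of $\Lambda+sY$ for all $|s|\le s_0$. Choose polynomials $p_k$ with $p_k\to f$, $p_k'\to f'$ and $p_k''\to f''$ uniformly on $J$; this is possible by Weierstrass applied to $f''$ and integrating twice. Then $\Tr p_k(\Lambda+sY)\to\Tr f(\Lambda+sY)$ uniformly in $|s|\le s_0$. The second derivatives of $\Tr p_k(\Lambda+sY)$ in $s$ are given by the polynomial formula evaluated at the base point $\Lambda+sY$, after rediagonalizing at that point. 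The divided differences satisfy $|p_k'^{[1]}-f'^{[1]}|\le\sup_J|p_k''-f''|$ by the mean value theorem, so these second derivatives converge uniformly in $s$ to the corresponding $f'^{[1]}$ expression. Uniform convergence of the functions together with uniform convergence of their second derivatives shows that $s\mapsto\Tr f(\Lambda+sY)$ is $C^2$ with the limiting second derivative. Evaluating at $s=0$ gives \eqref{eq:spectral-hessian}.

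An alternative to the approximation argument is a direct route. One first proves $D\Tr f(X)[Y]=\Tr(f'(X)Y)$, for instance via the polynomial case plus approximation at the first-order level. One then differentiates $s\mapsto f'(\Lambda+sY)$ using the Daleckii–Krein first-order formula $Df'(\Lambda)[Y]_{ab}=f'^{[1]}(\lambda_a,\lambda_b)Y_{ab}$, valid for $f'\in C^1$. Taking $\Tr(\,\cdot\,Y)$ then yields the same sum. Either way, the delicate point is the regularity and uniformity needed to pass from polynomials to $C^2$ functions; the algebra itself is routine.
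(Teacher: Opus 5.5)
Your proof is correct. Your main route differs from the paper's, while your ``alternative'' route is exactly the paper's proof.

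The paper argues in two lines. It uses $D\Tr f(B)[Y]=\Tr(f'(B)Y)$, so that $D^2\Tr f(B)[Y,Y]=\Tr(Df'(B)[Y]\,Y)$. It then quotes the Daleckii--Krein formula for the $C^1$ function $f'$, namely $Df'(B)[Y]=U\bigl([f'^{[1]}(\lambda_a,\lambda_b)]_{a,b}\circ(U^\top YU)\bigr)U^\top$, and taking the trace against $Y$ gives the stated sum.

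Your main route proves the formula from scratch, in two steps:
\begin{itemize}
\item The monomial computation: the $s^2$-coefficient $\frac m2\sum_{j+k=m-2}\Tr(\Lambda^jY\Lambda^kY)$ is correct, as is its identification with $f'^{[1]}$.
\item A $C^2$ polynomial approximation on a compact interval $J$. The mean-value bound $|p_k'^{[1]}-f'^{[1]}|\le\sup_J|p_k''-f''|$, together with $\sum_{a,b}|(U^\top YU)_{ab}|^2=\lVert Y\rVert_F^2$, gives uniform convergence of the second derivatives along the line.
\end{itemize}
This is self-contained: in effect you reprove the part of Daleckii--Krein that is needed. The paper instead hands all the analysis to the cited theorem.

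What the citation buys the paper is that $\Tr f$ is immediately $C^2$ in the Fr\'echet sense, so $D^2\Tr f(B)$ is a genuine symmetric bilinear form. That is how it is used later, with mixed arguments $D^2\mathcal V(B)[M_i,M_j]$ and mixed partials $\partial_{ij}$. As written, your argument only computes $\frac{d^2}{ds^2}\Tr f(B+sY)\big|_{s=0}$. To reach the same conclusion you would polarize the polynomial formula and run the same uniform-convergence argument over an operator-norm neighbourhood of $B$, where the mean-value bound is equally uniform. That step is routine.
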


\begin{proof}
Since $D\Tr f(B)[Y]=\Tr\bigl(f'(B)Y\bigr),$
we have
\[
D^2\Tr f(B)[Y,Y]
=
\Tr\bigl(Df'(B)[Y]\,Y\bigr).
\]
Write  $B=U\diag(\lambda_1,\dots,\lambda_d)U^\top$ for the spectral decomposition of $B$. Now, by the Daleckii--Krein formula
\cite[Theorem V.3.3]{BhatiaMatrixAnalysis},
applied to the function $f'$, we obtain that
\[
Df'(B)[Y]
=
U\Bigl(
\bigl[f'^{[1]}(\lambda_a,\lambda_b)\bigr]_{a,b}
\circ(U^\top YU)
\Bigr)U^\top.
\]

Therefore, 
\[
D^2\Tr f(B)[Y,Y]
=
\nlsum_{a,b=1}^d
f'^{[1]}(\lambda_a,\lambda_b)
|(U^\top YU)_{ab}|^2,
\]

which yields \eqref{eq:spectral-hessian}.
\end{proof}
Since $v_{q,\alpha,\beta}'^{[1]}(x,y)
=
\beta\,\ell'^{[1]}(x,y)
+
(\alpha-\beta)\,r_q'^{[1]}(x,y)$, the Hessian of $\mathcal V$ is determined by the divided
differences of $\ell'$ and $r_q'$. Therefore, we  record in the following estimates on these two scalar functions, which we will need later in our arguments. We first compute $\ell'^{[1]}(x,y)$. Using that $\ell'(t)
=
\frac{1}{1-t}-\frac{1}{1+t}$,  we obtain

\begin{equation}
\ell'^{[1]}(x,y)
=
\frac{1}{(1-x)(1-y)}
+
\frac{1}{(1+x)(1+y)},
\qquad x,y\in(-1,1).
\label{eq:log-barrier-divided-difference}
\end{equation}

Moreover, we provide  estimates on
$r_q'^{[1]}$ in the following lemma below. 

\begin{lemma}[Divided-difference estimates]
\label{lem:tail-divided-difference}
There exist universal constants $a_0,c_0,C_0>0$ such that, for every
$q\ge2$ and every $x,y\in(-1,1)$,
\begin{align}
r_q'^{[1]}(x,y)
&\ge
c_0\Bigl(
\frac{
\one_{\{\min(1-x,\,1-y)\le a_0/q\}}
}{
(1-x)(1-y)
}
+
\frac{
\one_{\{\min(1+x,\,1+y)\le a_0/q\}}
}{
(1+x)(1+y)
}
\Bigr),
\label{eq:tail-divided-difference-lower}\\
r_q'^{[1]}(x,y)
&\le
C_0 q\,
\max\{|x|,|y|\}^{2q-2}
\ell'^{[1]}(x,y).
\label{eq:tail-divided-difference-upper}
\end{align}
Moreover, the universal constants can be chosen as $a_0= \frac{1}{64}$, $c_0=\frac{1}{4}$ and $C_0=2$.
\end{lemma}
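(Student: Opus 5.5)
The plan is to make everything explicit from the closed form of $r_q'$. Differentiating the series gives $r_q'(t)=2\nlsum_{k\ge q}t^{2k-1}=t^{2q-2}\ell'(t)$ with $\ell'(t)=\frac{2t}{1-t^2}$. Write $h_m(x,y)=\nlsum_{j=0}^m x^jy^{m-j}=\frac{x^{m+1}-y^{m+1}}{x-y}$. Then termwise
\[
r_q'^{[1]}(x,y)=2\nlsum_{k\ge q}h_{2k-2}(x,y),\qquad \ell'^{[1]}(x,y)=2\nlsum_{k\ge1}h_{2k-2}(x,y),
\]
and every even-degree complete homogeneous polynomial $h_{2m}$ is nonnegative on $\R^2$. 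Hence $0\le r_q'^{[1]}\le\ell'^{[1]}$, and both bounds reduce to comparing the tail of this positive series with the full series, which equals \eqref{eq:log-barrier-divided-difference}.

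\emph{Upper bound \eqref{eq:tail-divided-difference-upper}.} I would use the Leibniz rule for divided differences, applied to $r_q'=f\,\ell'$ with $f(t)=t^{2q-2}$, and symmetrize in $x,y$:
\[
r_q'^{[1]}=\tfrac12\bigl(x^{2q-2}+y^{2q-2}\bigr)\ell'^{[1]}+\tfrac12\bigl(\ell'(x)+\ell'(y)\bigr)h_{2q-3}(x,y).
\]
The first term is at most $M^{2q-2}\ell'^{[1]}$, where $M=\max\{|x|,|y|\}$. For the second I use $|h_{2q-3}|\le(2q-2)M^{2q-3}$, so it suffices to check
\[
|\ell'(x)+\ell'(y)|\le c\,M\,\ell'^{[1]}(x,y).
\]
After clearing denominators this reads $|(x+y)(1-xy)|\le c\,M(1+xy)$. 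If $xy\ge0$ it holds with $c=2$. If $xy<0$, say $x=M$ and $y=-m$, it follows from $M-m\le(1-m)+(1-M)\cdot 0+\dots$, more precisely from $(M-m)(1+Mm)\le 2M(1-Mm)$, which I would verify by elementary algebra. Collecting terms gives the bound with $C_0=2$, up to adjusting the numerical constant in the last check.

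\emph{Lower bound \eqref{eq:tail-divided-difference-lower}.} By symmetry ($r_q'^{[1]}(-x,-y)=r_q'^{[1]}(x,y)$) I only treat the first indicator. Assume $1-x\le a_0/q$ and $1-x\le 1-y$; the second term is handled by the reflection, and when both indicators are active I add the two halves. The goal is $r_q'^{[1]}\ge\frac{c_0}{(1-x)(1-y)}$.

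For $y\ge0$, all monomials in $h_m$ are nonnegative, and I factor $h_{2k-2}(x,y)\ge x^{2q-2}h_{2k-2q}(x,y)$ by keeping only the terms with $j\ge 2q-2$ (up to reindexing). This gives
\[
r_q'^{[1]}\ge x^{2q-2}\cdot\tfrac12\,\ell'^{[1]}\text{-type sum}\ge x^{2q-2}\frac{c}{(1-x)(1-y)}.
\]
Since $x\ge1-\frac1{64q}$, we have $x^{2q-2}\ge e^{-1/32}$, which is a universal constant.

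For $y<0$ the monomials alternate in sign. Here I would instead use the partial-fraction form
\[
r_q'(t)=t^{2q-1}\Bigl(\frac1{1-t}+\frac1{1+t}\Bigr).
\]
The piece $\frac{t^{2q-1}}{1-t}$ contributes $\frac{1}{(1-x)(1-y)}$ minus the divided difference of a polynomial of degree $2q-2$. The other piece is nonnegative or of lower order, since $1+x\ge1$.

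The main obstacle is exactly this negative-$y$ case. The subtracted polynomial divided difference can be as large as $O(q^2)$, while $\frac1{(1-x)(1-y)}$ is only guaranteed to be $\gtrsim q$ there, because $1-y$ can be as large as $2$. To get around this, I would first try to bound the subtracted term by $O(q)$ rather than $O(q^2)$, using the alternating structure when $y\le0$: pairing consecutive monomials $x^jy^{m-j}$ gives cancellation. Failing that, I would choose $a_0$ small enough to absorb the loss. Finally, the explicit constants $a_0=\frac1{64}$ and $c_0=\frac14$ come from tracking these numerical factors.
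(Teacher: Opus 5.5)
Your upper bound and the $y\ge0$ half of the lower bound are correct, and they take a different route from the paper.

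\textbf{Upper bound.} The symmetrized Leibniz formula is right, and the inequality you defer to ``elementary algebra'' does hold with $c=2$. For $xy<0$, write $\{|x|,|y|\}=\{M,m\}$ with $0<m\le M<1$. The difference $2M(1-Mm)-(M-m)(1+Mm)=M+m+Mm^2-3M^2m$ is concave in $M\in[m,1]$, with endpoint values $2m(1-m^2)\ge0$ and $(1-m)^2\ge0$. Hence $r_q'^{[1]}\le(2q-1)M^{2q-2}\ell'^{[1]}$, which gives $C_0=2$. The paper reaches this faster by integrating the pointwise bound $r_q''(t)\le2q|t|^{2q-2}\ell''(t)$ over $[y,x]$.

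\textbf{Lower bound for $x\ge y\ge0$.} Your truncation of the series gives exactly $r_q'^{[1]}(x,y)\ge x^{2q-2}\ell'^{[1]}(x,y)\ge\frac{31}{32}\cdot\frac{1}{(1-x)(1-y)}$. No factor $\frac12$ is needed. This is cleaner than the paper's case split on whether $1-y\le\frac1{4q}$.

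\textbf{The gap: the case $y<0$.} You leave this case unresolved, and your fallback of shrinking $a_0$ cannot work. The main term $\frac{1}{(1-x)(1-y)}$ is only guaranteed to be $\ge q/(2a_0)$, and $a_0$ must not depend on $q$, so it could not absorb an $O(q^2)$ loss.

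The obstacle is, however, an artifact of writing $\frac{t^{2q-1}}{1-t}=\frac{1}{1-t}-p(t)$. For $y<0<x$ you have $r_q'(y)=\frac{2y^{2q-1}}{1-y^2}<0$. Using $x-y\le1-y$ and $1+x\le2$, this gives
\[
r_q'^{[1]}(x,y)\ge\frac{r_q'(x)}{x-y}\ge\frac{x^{2q-1}}{(1-x)(1-y)}\ge\frac{31}{32}\cdot\frac{1}{(1-x)(1-y)}.
\]
Equivalently, each piece $\frac{t^{2q-1}}{1\mp t}$ is positive at $x$ and negative at $y$, so both divided differences are positive. Your subtracted term is also only $O(q)$ in this case, since $x-y\ge x\ge\frac12$ and $|p(x)-p(y)|\le2q$.

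This sign observation is exactly how the paper's second case disposes of negative $y$. Once you add it, the reflection $r_q'^{[1]}(-x,-y)=r_q'^{[1]}(x,y)$ and $\max\{u,v\}\ge\frac12(u+v)$ close your argument with $a_0=\frac1{64}$ and $c_0=\frac14$.
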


Combining these estimates with Lemma~\ref{lem:spectral-hessian}, we can
now translate the scalar properties of $v_{q,\alpha,\beta}$ into
quantitative lower bounds for the Hessian of $\mathcal V$. To express
these bounds conveniently, in the following we introduce  some notation below.

For $\rho\in\{\pm1\}$, let 
\begin{equation}
    \label{eq:def-hrho}
    H_\rho(B):=I+\rho B
\end{equation}
 Since $\opnorm{B}<1$, both $H_+$ and $H_-$ are positive definite.
Moreover, for $Y\in\Sym_d(\R)$,  we define 
\begin{equation}
    \label{eq:def-erho}
    E^\rho(Y) := H_\rho^{-1/2}(\rho Y)H_\rho^{-1/2}
\end{equation}
Now, let $B=U\diag(\lambda_1,\dots,\lambda_d)U^\top$ be the spectral decomposition of $B$ and write
$\widetilde Y=U^\top YU$, then using \eqref{eq:log-barrier-divided-difference} and
Lemma~\ref{lem:spectral-hessian}, we obtain 
\begin{equation}
D^2\Tr\ell(B)[Y,Y]
=
\nlsum_{\rho\in\{\pm1\}}
\Tr\bigl(E^\rho(Y)^2\bigr) = \nlsum_{\rho\in\{\pm1\}}\nlsum_{a,b=1}^d
\frac{|\widetilde Y_{ab}|^2}
{(1+\rho\lambda_a)(1+\rho\lambda_b)}.
\label{eq:log-barrier-hessian}
\end{equation}

Let $P_\rho = \one_{\{H_\rho\le a_0q^{-1}I\}}$ denote the projection onto the eigenspace of $B$ whose eigenvalues lie within distance $a_0/q$ of $-\rho$, and let $Q_\rho=I-P_\rho$.  Furthermore, define
\begin{equation}
\label{eq:bulk-normalized-direction}
    E_{\mathrm{bulk}}^\rho(Y)
:=
Q_\rho E^\rho(Y)Q_\rho,
\end{equation}
and
\begin{equation}
  E_{\mathrm{bdry}}^\rho(Y)
:=
E^\rho(Y)-E_{\mathrm{bulk}}^\rho(Y).
\label{eq:boundary-normalized-direction}  
\end{equation}

Now, using Lemma~\ref{lem:tail-divided-difference} and
Lemma~\ref{lem:spectral-hessian} we get that,

\begin{align*}
D^2\Tr r_q(B)[Y,Y]
&\ge
c_0\nlsum_{\rho\in\{\pm1\}}\nlsum_{a,b=1}^d
\frac{
\one_{\{
\min(1+\rho\lambda_a,\,1+\rho\lambda_b)\le a_0/q
\}}
}{
(1+\rho\lambda_a)(1+\rho\lambda_b)
}
|\widetilde Y_{ab}|^2 \\
&= c_0\nlsum_{\rho\in\{\pm1\}}\nlsum_{a,b=1}^d\one_{\{
\min(1+\rho\lambda_a,\,1+\rho\lambda_b)\le a_0/q 
\}} |E^\rho(Y)_{ab}|^2,
\end{align*}

where used that $|E^\rho(Y)_{ab}|^2
=
\frac{|\widetilde Y_{ab}|^2}
{(1+\rho\lambda_a)(1+\rho\lambda_b)}.$
Consequently, as the matrix
$E_{\mathrm{bdry}}^\rho(Y)$ consists, in an eigenbasis of $B$, precisely of the entries corresponding to pairs $(a,b)$ for which $\min\{1+\rho\lambda_a,1+\rho\lambda_b\}
\le \frac{a_0}{q}$, we then obtain
\begin{equation}
D^2\Tr r_q(B)[Y,Y]
\ge
c_0
\nlsum_{\rho\in\{\pm1\}}
\norm{E_{\mathrm{bdry}}^\rho(Y)}_F^2.
\label{eq:tail-hessian-lower}
\end{equation}

\begin{lemma}[Lower bound for the Hessian of the Potential]
\label{lem:potential-hessian-lower} 
Let $\mathcal V(B)=\Tr v_{q,\alpha,\beta}(B)$ be defined as in \eqref{eq:potentialfunction}, and let $Y,B\in\Sym_d(\R)$ with
$\opnorm{B}<1$. 
 Then, we have
\begin{equation}
D^2\mathcal V(B)[Y,Y]
\ge
\nlsum_{\rho\in\{\pm1\}}
\bigl(
c_0\alpha
\norm{E_{\mathrm{bdry}}^\rho(Y)}_F^2
+
\beta
\norm{E_{\mathrm{bulk}}^\rho(Y)}_F^2
\bigr),
\label{eq:potential-hessian-lower}
\end{equation}
where $c_0$ is the same universal constant as in Lemma \ref{lem:tail-divided-difference}.
\end{lemma}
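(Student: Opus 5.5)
The plan is to split the Hessian along the decomposition $v_{q,\alpha,\beta}=\beta\ell+(\alpha-\beta)r_q$. By linearity of the second derivative,
\[
D^2\mathcal V(B)[Y,Y]=\beta\,D^2\Tr\ell(B)[Y,Y]+(\alpha-\beta)\,D^2\Tr r_q(B)[Y,Y].
\]
Each of the two pieces is already controlled by earlier results, and the proof consists of combining those bounds.

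For the first term, I would use \eqref{eq:log-barrier-hessian}. It says that $D^2\Tr\ell(B)[Y,Y]=\sum_\rho\norm{E^\rho(Y)}_F^2$. For each $\rho$, $E^\rho(Y)=E^\rho_{\mathrm{bulk}}(Y)+E^\rho_{\mathrm{bdry}}(Y)$, where $E^\rho_{\mathrm{bulk}}=Q_\rho E^\rho Q_\rho$. Now $P_\rho$ is a spectral projection of $B$, hence of $H_\rho$, so in an eigenbasis of $B$ the entries of $E^\rho(Y)$ split into two disjoint index sets. The bulk part collects the pairs $(a,b)$ with both indices outside $P_\rho$, and the boundary part collects the complementary pairs, where at least one of $1+\rho\lambda_a,1+\rho\lambda_b$ is at most $a_0/q$. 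Because these supports are disjoint, the Frobenius norm splits orthogonally:
\[
\norm{E^\rho(Y)}_F^2=\norm{E^\rho_{\mathrm{bulk}}(Y)}_F^2+\norm{E^\rho_{\mathrm{bdry}}(Y)}_F^2.
\]

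For the second term, I would apply \eqref{eq:tail-hessian-lower}, which gives $D^2\Tr r_q(B)[Y,Y]\ge c_0\sum_\rho\norm{E^\rho_{\mathrm{bdry}}(Y)}_F^2$. This is valid because $\alpha-\beta\ge0$. Adding the two contributions, the coefficient of $\norm{E^\rho_{\mathrm{bulk}}}_F^2$ is $\beta$, and the coefficient of $\norm{E^\rho_{\mathrm{bdry}}}_F^2$ is at least $\beta+c_0(\alpha-\beta)$.

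It remains to check that $\beta+c_0(\alpha-\beta)\ge c_0\alpha$. This reduces to $\beta(1-c_0)\ge0$, which holds since $c_0=1/4\le1$ and $\beta>0$. This proves \eqref{eq:potential-hessian-lower}. I do not expect a real obstacle; the only point needing care is the orthogonal splitting of $E^\rho$, which relies on $P_\rho$ commuting with $B$. Indeed, $P_\rho$ is defined by functional calculus of $H_\rho=I+\rho B$, so this commutation holds.
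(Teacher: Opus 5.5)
Your proposal is correct and follows the paper's proof essentially verbatim. Like the paper, you split $v_{q,\alpha,\beta}=\beta\ell+(\alpha-\beta)r_q$, apply \eqref{eq:log-barrier-hessian} with $\norm{E^\rho(Y)}_F^2=\norm{E^\rho_{\mathrm{bdry}}(Y)}_F^2+\norm{E^\rho_{\mathrm{bulk}}(Y)}_F^2$, add $(\alpha-\beta)$ times \eqref{eq:tail-hessian-lower}, and check $\beta+c_0(\alpha-\beta)\ge c_0\alpha$ (one small overstatement: the Frobenius splitting holds for any orthogonal projection $Q_\rho$, and commutation with $B$ is needed only for the eigenbasis description behind \eqref{eq:tail-hessian-lower}, which the paper has already established).
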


Now, let $M_1,\dots,M_n$ be a family of symmetric matrices. It will be convenient to introduce the following notation. For $\rho\in\{\pm1\}$, we write
\[
E_i^\rho := E^\rho(M_i)
=
H_\rho^{-1/2}(\rho M_i)H_\rho^{-1/2},
\]
where $H_\rho$ and $E^\rho$ are defined as in \eqref{eq:def-hrho} and \eqref{eq:def-erho} respectively. 

Moreover, write 
\begin{equation}
    \label{eq:def-ebulk}
  E_{i,\mathrm{bulk}}^\rho
:=
E_{\mathrm{bulk}}^\rho(M_i)
=
Q_\rho E_i^\rho Q_\rho,
\qquad
E_{i,\mathrm{bdry}}^\rho
:=
E_{\mathrm{bdry}}^\rho(M_i)
=
E_i^\rho-E_{i,\mathrm{bulk}}^\rho,
\end{equation}
and  for $z = (z_1,\dots,z_n)$ with $z_i\in \R^r$ define the Gram forms,

\begin{align}
\mathfrak h_\rho(z)
&:=
\nlsum_{i,j=1}^n
\Tr\!(
E_{i,\mathrm{bdry}}^\rho
E_{j,\mathrm{bdry}}^\rho
)
\langle z_i,z_j\rangle,
\label{eq:hard-edge-gram-form}\\
\mathfrak u_\rho(z)
&:=
\nlsum_{i,j=1}^n
\Tr\!(
E_{i,\mathrm{bulk}}^\rho
E_{j,\mathrm{bulk}}^\rho
)
\langle z_i,z_j\rangle.
\label{eq:bulk-gram-form}
\end{align}

We will need later the following simple consequence of Lemma~\ref{lem:potential-hessian-lower}.

\begin{lemma}
\label{lem:barrier-hessian-coercivity}
Let $\mathcal V(B)=\Tr v_{q,\alpha,\beta}(B)$ be defined as in \eqref{eq:potentialfunction}, and let $M_1,\dots,M_n$ be symmetric matrices. Moreover, let $B\in\Sym_d(\R)$  be a symmetric matrix satisfying
$\opnorm{B}<1$. Then, for every finite-dimensional Hilbert space $\mathcal H$
and every family $z_1,\dots,z_n\in\mathcal H$,
\begin{equation}
\nlsum_{i,j=1}^n
D^2\mathcal V(B)[M_i,M_j]\,
\langle z_i,z_j\rangle
\ge
c_0\alpha
\nlsum_{\rho\in\{\pm1\}}\mathfrak h_\rho(z)
+
\beta
\nlsum_{\rho\in\{\pm1\}}\mathfrak u_\rho(z),
\label{eq:barrier-hessian-coercivity}
\end{equation}
where $c_0$ is the same universal constant as in Lemma \ref{lem:potential-hessian-lower}.
\end{lemma}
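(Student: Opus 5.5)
The plan is to reduce the statement to Lemma~\ref{lem:potential-hessian-lower} by diagonalizing the Gram matrix of the vectors $z_i$. Since $\mathcal H$ is finite-dimensional, fix an orthonormal basis $f_1,\dots,f_m$ of $\mathcal H$ and write $z_i=\nlsum_{k=1}^m z_i^{(k)}f_k$ with real coefficients $z_i^{(k)}$. Then $\langle z_i,z_j\rangle=\nlsum_k z_i^{(k)}z_j^{(k)}$. For each $k$ set $Y_k=\nlsum_i z_i^{(k)}M_i\in\Sym_d(\R)$.

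The first step uses bilinearity of the second derivative $D^2\mathcal V(B)[\cdot,\cdot]$, which is a symmetric bilinear form on $\Sym_d(\R)$ since $\opnorm{B}<1$ and $\mathcal V$ is smooth there. This gives
\[
\nlsum_{i,j}D^2\mathcal V(B)[M_i,M_j]\langle z_i,z_j\rangle
=\nlsum_{k=1}^m D^2\mathcal V(B)[Y_k,Y_k].
\]
Applying Lemma~\ref{lem:potential-hessian-lower} to each $Y_k$ bounds this from below by $\nlsum_k\nlsum_\rho\bigl(c_0\alpha\norm{E^\rho_{\mathrm{bdry}}(Y_k)}_F^2+\beta\norm{E^\rho_{\mathrm{bulk}}(Y_k)}_F^2\bigr)$.

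The second step identifies these sums with the Gram forms. The maps $Y\mapsto E^\rho(Y)$, $Y\mapsto Q_\rho E^\rho(Y)Q_\rho$ and hence $E^\rho_{\mathrm{bdry}}$ are linear in $Y$, with $B$ fixed, so that $P_\rho,Q_\rho,H_\rho$ are fixed. Therefore $E^\rho_{\mathrm{bdry}}(Y_k)=\nlsum_i z_i^{(k)}E^\rho_{i,\mathrm{bdry}}$, and
\[
\nlsum_k\norm{E^\rho_{\mathrm{bdry}}(Y_k)}_F^2=\nlsum_{i,j}\Tr(E^\rho_{i,\mathrm{bdry}}E^\rho_{j,\mathrm{bdry}})\nlsum_k z_i^{(k)}z_j^{(k)}=\mathfrak h_\rho(z).
\]
This uses that the $E$'s are symmetric, so $\norm{X}_F^2=\Tr(X^2)$; symmetry holds because $H_\rho^{-1/2}$ and $Q_\rho$ are symmetric. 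The bulk term gives $\mathfrak u_\rho(z)$ in the same way. Summing over $\rho$ yields \eqref{eq:barrier-hessian-coercivity}.

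There is no real obstacle here. The only points to check are that $E^\rho_{\mathrm{bdry}}$ is genuinely linear, because the projections depend only on $B$ and not on $Y$, and that the coordinate expansion is basis-independent, which holds because the Gram forms depend only on the inner products $\langle z_i,z_j\rangle$.
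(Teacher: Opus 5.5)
Your proposal is correct and matches the paper's proof essentially step for step. Like the paper, you expand the $z_i$ in an orthonormal basis of $\mathcal H$, form $Y_k=\sum_i z_{i,k}M_i$, rewrite the left side via bilinearity of the Hessian, identify the sums with $\mathfrak h_\rho$ and $\mathfrak u_\rho$ using linearity of $Y\mapsto E^\rho_{\mathrm{bdry}}(Y)$ and $Y\mapsto E^\rho_{\mathrm{bulk}}(Y)$ for fixed $B$, and apply Lemma~\ref{lem:potential-hessian-lower} to each $Y_k$; your remarks on the symmetry of the $E$'s and on basis-independence are correct details the paper leaves implicit.
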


Next, we record a useful simple technical lemma. 

\begin{lemma}
\label{lem:cyclic-tensor-bounds}
Let $C_1,\dots,C_n,D_1,\dots,D_n\in\Sym_d(\R)$ and let $z_1,\dots,z_n\in\R^d\otimes\mathcal H$, where
$\mathcal H$ is  some finite-dimensional Hilbert space. Then, we have that
\begin{equation}
\Bigl|
\nlsum_{i,j=1}^n
\langle
z_i,\,
(C_jC_i\otimes I_{\mathcal H})z_j
\rangle
\Bigr|
\le
\nlsum_{i,j=1}^n
\Tr(C_iC_j)\,
\langle z_i,z_j\rangle,
\label{eq:cyclic-tensor-pure}
\end{equation}
and
\begin{equation}
    \Bigl|
\nlsum_{i,j=1}^n
\langle
z_i,\,
(C_jD_i\otimes I_{\mathcal H})z_j
\rangle
\Bigr|
\le
\Bigl(
\nlsum_{i,j=1}^n
\Tr(C_iC_j)\,
\langle z_i,z_j\rangle
\Bigr)^{1/2}
\Bigl(
\nlsum_{i,j=1}^n
\Tr(D_iD_j)\,
\langle z_i,z_j\rangle
\Bigr)^{1/2}.
\label{eq:cyclic-tensor-mixed}
\end{equation}
\end{lemma}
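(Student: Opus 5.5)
The plan is to realize both sums as a single inner product on the Hilbert space $\R^d\otimes\R^d\otimes\mathcal H$, and then conclude by Cauchy--Schwarz. The key object is the vector
\[
\Phi_C:=\nlsum_{i=1}^n (C_i\otimes I_{\R^d}\otimes I_{\mathcal H})\,\tau(z_i)\in\R^d\otimes\R^d\otimes\mathcal H .
\]
Here $\tau$ is a suitable embedding, to be fixed below, of $z_i\in\R^d\otimes\mathcal H$ into a larger space. Its role is to convert the product $C_jC_i$ acting on one copy of $\R^d$ into a trace pairing.

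Concretely, write $z_i=\nlsum_{k} e_k\otimes z_i^{(k)}$ with $z_i^{(k)}\in\mathcal H$ and $e_k$ the standard basis. Expanding in coordinates gives
\[
\nlsum_{i,j}\langle z_i,(C_jD_i\otimes I)z_j\rangle=\nlsum_{i,j}\nlsum_{k,l}(C_jD_i)_{kl}\langle z_i^{(k)},z_j^{(l)}\rangle .
\]
Splitting the matrix product as $(C_jD_i)_{kl}=\nlsum_m (C_j)_{km}(D_i)_{ml}$, this becomes
\[
\nlsum_{m}\Bigl\langle \nlsum_{i,l}(D_i)_{ml}\, e_l\otimes z_i^{(\cdot)}\,,\ \nlsum_{j,k}(C_j)_{km}\,\cdots\Bigr\rangle .
\]
A cleaner way to say this is as follows. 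Define $X_D:=\nlsum_i D_i\otimes z_i\in\R^{d\times d}\otimes\mathcal H$, an $\mathcal H$-valued matrix, and similarly $X_C$. The quantity above then equals $\langle X_D^{\flat},X_C^{\sharp}\rangle$, where $\flat$ and $\sharp$ are appropriate index rearrangements (a partial transpose). Since these rearrangements are isometries of the Hilbert--Schmidt-type inner product, we expect the identity
\[
\nlsum_{i,j}\langle z_i,(C_jD_i\otimes I)z_j\rangle=\langle \mathcal T X_D,\mathcal T' X_C\rangle
\]
with $\mathcal T,\mathcal T'$ isometries.

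The next step is to compute the norms: $\norm{X_C}^2=\nlsum_{i,j}\langle C_i,C_j\rangle_{F}\langle z_i,z_j\rangle=\nlsum_{i,j}\Tr(C_iC_j)\langle z_i,z_j\rangle$, using that the $C_i$ are symmetric. This is exactly the Gram form on the right-hand side of the lemma; it is nonnegative because it is a squared norm. Cauchy--Schwarz then gives \eqref{eq:cyclic-tensor-mixed} immediately, and \eqref{eq:cyclic-tensor-pure} is the special case $D_i=C_i$.

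The main obstacle is checking the index bookkeeping: that the pairing really reduces to an inner product of two vectors whose norms are the Gram forms. The subtle point is where $z_i$'s own $\R^d$ factor goes. My first attempt would write $z_i$ as a $d\times\dim\mathcal H$ matrix $Z_i$. Then $\langle z_i,(C_jD_i\otimes I)z_j\rangle=\Tr(Z_i^\top C_jD_iZ_j)$, and the sum equals $\Tr\bigl(\nlsum_{i,j}D_iZ_jZ_i^\top C_j\bigr)$ by cyclicity and symmetry of $C_j$. I would define the vectors in $\R^d\otimes\R^d\otimes\mathcal H$ by
\[
u:=\nlsum_i \vecop(D_i)\otimes Z_i,
\]
suitably contracted, and verify the identity entrywise via $\Tr(D_iZ_jZ_i^\top C_j)=\nlsum_{k,l,m}(D_i)_{mk}(Z_jZ_i^\top)_{kl}(C_j)_{lm}$. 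This expression is symmetric enough to be written as $\langle\sum_i D_i\otimes Z_i,\ \sum_j C_j\otimes Z_j\rangle$ after a swap of tensor legs. If that swap fails to be an isometry, the fallback is to apply the Cauchy--Schwarz inequality for the positive semidefinite bilinear form $(A,B)\mapsto\nlsum_{i,j}\Tr(A_iB_j)\langle z_i,z_j\rangle$, after first verifying directly that this form is positive semidefinite.
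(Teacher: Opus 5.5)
Your proposal is correct and is essentially the paper's argument. The paper also forms the three-index ($\mathcal H$-valued) tensor $T^C_{abc}=\nlsum_i (C_i)_{ab}(z_i)_c$, whose squared norm is $\nlsum_{i,j}\Tr(C_iC_j)\langle z_i,z_j\rangle$, writes the cyclic sum as the pairing of this tensor with an index-permuted copy of itself, and concludes by Cauchy--Schwarz because permuting tensor legs is orthogonal. Your final version, with $Z_i$ a $d\times\dim\mathcal H$ matrix and $\nlsum_i D_i\otimes Z_i$, does the same thing. Such a leg permutation is always an isometry, so your hedge is unnecessary. Your stated fallback would not work, since the left-hand side is not of the form $\nlsum_{i,j}\Tr(A_iB_j)\langle z_i,z_j\rangle$. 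Also, your first ambient space $\R^d\otimes\R^d\otimes\mathcal H$ is one $\R^d$ leg short; the $Z_i$ formulation fixes this.
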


In the following technical lemma, we establish a variance bound on the matrix family $E_{1,\mathrm{bulk}}^\rho, \dots,E_{n,\mathrm{bulk}}^\rho$.

\begin{lemma}[Variance bound]
\label{lem:pure-bulk-variance}
Let $M_1,\dots,M_n$ be some family of symmetric matrices satisfying $\nlsum_{i=1}^nM_i^2
\preceq
\eta I_d$, and let $B\in\Sym_d(\R)$  be a symmetric matrix satisfying
$\opnorm{B}<1$.
 Then, there is some universal constant $C>0$ such that 
\begin{equation}
\nlsum_{i=1}^n
(E_{i,\mathrm{bulk}}^\rho)^2
\preceq
Cq^2\eta I_d,
\label{eq:pure-bulk-variance}
\end{equation}
where $E_{1,\mathrm{bulk}}^\rho, \dots,E_{n,\mathrm{bulk}}^\rho$ are defined as in \eqref{eq:def-ebulk}, and where one can take $C=\frac{1}{a_0^2}=4096$.
Moreover, for every finite-dimensional Hilbert space $\mathcal H$
and every family $z_1,\dots,z_n\in\R^d\otimes\mathcal H$ we moreover have,
\begin{equation}
\Bigl|
\nlsum_{i,j=1}^n
\bigl\langle
z_i,\,
(
E_{j,\mathrm{bulk}}^\rho
E_{i,\mathrm{bulk}}^\rho
\otimes I_{\mathcal H}
)z_j
\bigr\rangle
\Bigr|
\le
Cq^2\eta
\nlsum_{i=1}^n\norm{z_i}_2^2.
\label{eq:pure-bulk-cyclic-bound}
\end{equation}
\end{lemma}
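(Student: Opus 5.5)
The plan is to prove the operator bound \eqref{eq:pure-bulk-variance} first and then obtain \eqref{eq:pure-bulk-cyclic-bound} from it by a Cauchy--Schwarz argument.

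\textbf{Step 1: the operator bound.} Write $E_{i,\mathrm{bulk}}^\rho=Q_\rho H_\rho^{-1/2}(\rho M_i)H_\rho^{-1/2}Q_\rho$. Since $Q_\rho$ is a spectral projection of $B$, it commutes with $H_\rho$. On the range of $Q_\rho$ every eigenvalue of $H_\rho$ exceeds $a_0/q$. Hence $K:=Q_\rho H_\rho^{-1/2}=H_\rho^{-1/2}Q_\rho$ is symmetric and satisfies $\opnorm{K}\le (q/a_0)^{1/2}$. Then
\[
\nlsum_i (E_{i,\mathrm{bulk}}^\rho)^2=\nlsum_i K M_i K^2 M_i K \preceq \opnorm{K}^2\, K\Bigl(\nlsum_i M_i^2\Bigr)K \preceq \frac{q}{a_0}\,\eta\, K^2\preceq \frac{q^2}{a_0^2}\,\eta\, I_d .
\]
Here we used $M_iK^2M_i\preceq \opnorm{K}^2M_i^2$, which holds because conjugation preserves the Loewner order, and $K^2\preceq (q/a_0)I$. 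This gives the constant $C=a_0^{-2}$, matching the statement.

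\textbf{Step 2: the cyclic bound.} Set $C_i:=E_{i,\mathrm{bulk}}^\rho$ and $\tilde C_i:=C_i\otimes I_{\mathcal H}$. Then
\[
\nlsum_{i,j}\langle z_i,\tilde C_j\tilde C_i z_j\rangle=\nlsum_{i,j}\langle \tilde C_j z_i,\tilde C_i z_j\rangle .
\]
Consider the vectors $X_{ij}:=\tilde C_j z_i$ and $Y_{ij}:=\tilde C_i z_j$ in $\bigoplus_{i,j}(\R^d\otimes\mathcal H)$. Cauchy--Schwarz gives $|\sum_{i,j}\langle X_{ij},Y_{ij}\rangle|\le \|X\|\,\|Y\|$. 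Each norm equals the same quantity:
\[
\nlsum_{i,j}\|\tilde C_j z_i\|^2=\nlsum_i\Bigl\langle z_i,\Bigl(\nlsum_j C_j^2\otimes I\Bigr)z_i\Bigr\rangle\le Cq^2\eta\nlsum_i\|z_i\|_2^2 ,
\]
where the last inequality is Step 1. The product of the two norms is therefore at most $Cq^2\eta\sum_i\|z_i\|^2$, which is \eqref{eq:pure-bulk-cyclic-bound}.

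\textbf{Main subtlety.} The delicate point is in Step 1: the $Q_\rho$ truncation makes $H_\rho^{-1/2}$ bounded. The estimate is used with only one factor of $\opnorm{K}^2$ absorbed into the middle, plus one $K^2$ on the outside. This costs $q^2/a_0^2$ in total, which is exactly the claimed order, so nothing is lost. Beyond checking that the projections commute with $H_\rho$, which holds because both are functions of $B$, no further obstacle is expected.
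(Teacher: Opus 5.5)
Your proposal is correct and follows the paper's proof: the paper also uses that $Q_\rho$ commutes with $H_\rho$ and that $Q_\rho H_\rho^{-1}Q_\rho\preceq \frac{q}{a_0}Q_\rho$, applied once in the middle (via $M_iQ_\rho M_i\preceq M_i^2$) and once on the outside, which gives $C=a_0^{-2}$. For the cyclic bound, the paper uses the termwise inequality $|\langle u,v\rangle|\le\frac12(\|u\|^2+\|v\|^2)$ with $u=(E_{j,\mathrm{bulk}}^\rho\otimes I_{\mathcal H})z_i$ and $v=(E_{i,\mathrm{bulk}}^\rho\otimes I_{\mathcal H})z_j$ rather than your global Cauchy--Schwarz; both reduce to $\sum_i\langle z_i,(\sum_j (E_{j,\mathrm{bulk}}^\rho)^2\otimes I_{\mathcal H})z_i\rangle$ and give the same bound.
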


Let  $\sigma,\tau\in\{\pm1\}$, and denote

\begin{equation}
\mathcal A_{\sigma\tau}
:=
H_\tau\otimes H_\sigma,
\qquad
\mathcal K_{\sigma\tau}
:=
\mathcal A_{\sigma\tau}^{-1}
=
H_\tau^{-1}\otimes H_\sigma^{-1}
\label{eq:two-slot-inverse-weight}
\end{equation}
where $H_\sigma$ is defined as in \eqref{eq:def-hrho}. Now, let $B= B(g)= \nlsum_{i=1}^ng_iM_i$.
On the event $\Omega=\{g\in\R^n:\opnorm{B(g)}<1\}$ we 
let
\begin{equation}
W_{\sigma\tau}(g)
:=
e^{-\norm{g}^2/2-\mathcal V(B(g))}
\mathcal K_{\sigma\tau}(g).
\label{eq:two-slot-weight}
\end{equation}

Moreover, for $\rho\in\{\pm1\}$ we denote  
\begin{equation}
\label{eq:one-slot-weight}
 W_\rho(g)
=
e^{-\norm{g}^2/2-\mathcal V(B(g))}H_\rho^{-1}.
\end{equation}

Recall from \eqref{eq:matrix-curvature-definition} that the curvature block matrix of a
positive matrix weight $W$ is given by 
\[
[\Theta_{ij}(W)]_{ij}, \qquad\Theta_{ij}(W)
=
(\partial_iW)W^{-1}(\partial_jW)-\partial_{ij}W.
\]
It will be convenient to work with the corresponding normalized
curvature blocks
\begin{equation}
\widetilde\Theta_{ij}(W)
:=
W^{-1/2}
\Theta_{ij}(W)
W^{-1/2}.
\label{eq:normalized-two-slot-curvature-definition}
\end{equation}

In the following our goal will be to show that
\[
[\widetilde\Theta_{ij}^{\sigma\tau}]_{i,j=1}^n
\succeq \frac{1}{2} I, \qquad \text{and} \qquad [\widetilde\Theta_{ij}^{\rho}]_{i,j=1}^n
\succeq \frac{1}{2} I
\]
where $[\widetilde\Theta_{ij}^{\sigma\tau}]_{ij}:=[\widetilde\Theta_{ij}(W_{\sigma\tau})]_{ij}$ and $[\widetilde\Theta_{ij}^{\rho}]_{ij}:=[\widetilde\Theta_{ij}(W_{\rho})]_{ij}$ denote the normalized curvature block matrix associated with $W_{\sigma\tau}$, respectively, $W_{\rho}$.

\begin{proposition}[Curvature lower bound]
\label{prop:two-slot-curvature} 
  There exist universal constants $A_*,D_*,c_{\mathrm{curv}}>0$ such that the following holds. 
Suppose that 
\begin{equation}
\alpha\ge\beta, \qquad
\alpha\ge A_*,
\qquad
\alpha\beta\ge D_*,
\qquad
q^2\eta\le c_{\mathrm{curv}},
\label{eq:curvature-parameter-conditions}
\end{equation}
Let $\mathcal V(B)=\Tr v_{q,\alpha,\beta}(B)$ be defined as in \eqref{eq:potentialfunction}, and let $B(g)= \nlsum_{i=1}^ng_iM_i$ with symmetric coefficient matrices $M_i$ satisfying $\nlsum_{i=1}^nM_i^2
\preceq
\eta I_d$. Let $W_{\sigma\tau}$ and $W_{\rho}$ be defined as in \eqref{eq:two-slot-weight}, respectively, \eqref{eq:one-slot-weight}, and let 
  $[\widetilde\Theta_{ij}^{\sigma\tau}]_{i,j=1}^n$ and $[\widetilde\Theta_{ij}^{\rho}]_{i,j=1}^n$ denote the normalized curvature block matrix associated with $W_{\sigma\tau}$, respectively, $W_{\rho}$. 
  
Then, for any
$\rho,\sigma,\tau\in\{\pm1\}$ we have that, 
\begin{equation}
[\widetilde\Theta_{ij}^{\sigma\tau}]_{i,j=1}^n
\succeq \frac{1}{2} I, \qquad \text{and} \qquad [\widetilde\Theta_{ij}^{\rho}]_{i,j=1}^n
\succeq \frac{1}{2} I
\label{eq:two-slot-curvature-gap}
\end{equation}
Moreover, the absolute constants can be chosen as $A_*=32$, $D_*=128$ and $c_{\mathrm{curv}}= \frac{1}{16384}$.
\end{proposition}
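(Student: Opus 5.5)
The plan is to compute the curvature blocks of a weight of the form $W=e^{-\varphi}\mathcal K$ explicitly, where $\varphi(g)=\norm{g}^2/2+\mathcal V(B(g))$ is scalar and $\mathcal K=\mathcal A^{-1}$ with $\mathcal A$ affine in $g$. Then reduce \eqref{eq:two-slot-curvature-gap} to a quadratic-form inequality in $z=(z_1,\dots,z_n)$, and absorb all negative terms into the positive ones supplied by Lemma~\ref{lem:barrier-hessian-coercivity} and by the Gaussian part.

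First, a direct product-rule computation should give
\[
\Theta_{ij}(e^{-\varphi}\mathcal K)=e^{-\varphi}\bigl(\partial_{ij}\varphi\,\mathcal K+\Theta_{ij}(\mathcal K)\bigr).
\]
The terms $\partial_i\varphi\,\partial_j\mathcal K$ and $\partial_i\varphi\,\partial_j\varphi\,\mathcal K$ cancel between $(\partial_iW)W^{-1}(\partial_jW)$ and $\partial_{ij}W$. Next I need $\Theta_{ij}(\mathcal K)$ for $\mathcal K=\mathcal A^{-1}$. Write $\mathcal A_i=\partial_i\mathcal A$, which is constant. Then $\partial_i\mathcal K=-\mathcal K\mathcal A_i\mathcal K$ and $\partial_{ij}\mathcal K=\mathcal K\mathcal A_i\mathcal K\mathcal A_j\mathcal K+\mathcal K\mathcal A_j\mathcal K\mathcal A_i\mathcal K$. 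Hence $\Theta_{ij}(\mathcal K)=-\mathcal K\mathcal A_j\mathcal K\mathcal A_i\mathcal K$. Setting $X_i:=\mathcal K^{1/2}\mathcal A_i\mathcal K^{1/2}$ and using $\partial_{ij}\varphi=\delta_{ij}+D^2\mathcal V(B)[M_i,M_j]$, the normalized blocks are
\[
\widetilde\Theta_{ij}=\delta_{ij}I+D^2\mathcal V(B)[M_i,M_j]\,I-X_jX_i .
\]
For the one-slot weight $W_\rho$ we have $\mathcal A=H_\rho$, so $X_i=E_i^\rho$. For $W_{\sigma\tau}$ we have $\mathcal A_i=\tau M_i\otimes H_\sigma+H_\tau\otimes\sigma M_i$, so $X_i=E_i^\tau\otimes I+I\otimes E_i^\sigma$. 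The claim thus becomes
\[
\nlsum_i\norm{z_i}^2+\nlsum_{i,j}D^2\mathcal V(B)[M_i,M_j]\langle z_i,z_j\rangle-\nlsum_{i,j}\langle z_i,X_jX_iz_j\rangle\ \ge\ \tfrac12\nlsum_i\norm{z_i}^2 .
\]
By Lemma~\ref{lem:barrier-hessian-coercivity}, the second term is at least $c_0\alpha\sum_\rho\mathfrak h_\rho(z)+\beta\sum_\rho\mathfrak u_\rho(z)$.

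To control the negative term, I split each $E_i^\rho=E_{i,\mathrm{bdry}}^\rho+E_{i,\mathrm{bulk}}^\rho$ and expand $X_jX_i$. This produces pure bdry--bdry, bulk--bulk and mixed terms in each slot, plus (in the two-slot case) cross-slot terms $E_j^\tau\otimes E_i^\sigma$ and $E_i^\tau\otimes E_j^\sigma$. The terms are handled as follows.
\begin{itemize}
\item Pure boundary terms are bounded by $\mathfrak h_\rho(z)$ via \eqref{eq:cyclic-tensor-pure}, viewing $\R^d\otimes\R^d\otimes\mathcal H$ with the other factor absorbed into $\mathcal H$. They are absorbed by $c_0\alpha\,\mathfrak h_\rho$ once $\alpha\ge A_*$.
\item Pure bulk terms are bounded by $Cq^2\eta\sum\norm{z_i}^2$ via \eqref{eq:pure-bulk-cyclic-bound}. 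They are absorbed into the identity term once $q^2\eta\le c_{\mathrm{curv}}$.
\item Mixed boundary--bulk terms are bounded by \eqref{eq:cyclic-tensor-mixed} by $\sqrt{\mathfrak h_\rho\mathfrak u_\rho}\le\epsilon\alpha\mathfrak h_\rho+\mathfrak u_\rho/(4\epsilon\alpha)$. The second piece is absorbed by $\beta\mathfrak u_\rho$ exactly when $\alpha\beta\ge D_*$, which is where that condition comes from.
\end{itemize}

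The cross-slot terms $\langle z_i,(E_j^\tau\otimes E_i^\sigma)z_j\rangle$ are the step I expect to be the main obstacle, since Lemma~\ref{lem:cyclic-tensor-bounds} is stated for a single slot. My first attempt is Cauchy--Schwarz in the form $|\langle z_i,(P\otimes Q)z_j\rangle|$ applied after rewriting $E_j^\tau\otimes E_i^\sigma=(E_j^\tau\otimes I)(I\otimes E_i^\sigma)$. Summing over $i,j$ should then bound the cross term by the geometric mean of the two pure-slot quadratic forms $\sum\langle z_i,(E_j^\tau E_i^\tau\otimes I)z_j\rangle$ and its $\sigma$-analogue, or more safely by $\tfrac12$ of their sum after splitting into bulk and boundary parts. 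Each of those has already been controlled above, so the cross terms are absorbed at the cost of a factor $2$ in the constants. Finally I would track the constants ($c_0=\tfrac14$, $C=4096$) to check that $A_*=32$, $D_*=128$ and $c_{\mathrm{curv}}=1/16384$ leave at least $\tfrac12\sum\norm{z_i}^2$. The one-slot case $W_\rho$ is the same computation without cross terms.
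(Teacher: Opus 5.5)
Your curvature formula for $W_{\sigma\tau}$ is wrong. You treat $\mathcal A_{\sigma\tau}=H_\tau\otimes H_\sigma$ as affine in $g$ with constant $\partial_i\mathcal A$. In fact $H_\tau\otimes H_\sigma=I+\tau B\otimes I+\sigma I\otimes B+\sigma\tau B\otimes B$ is quadratic in $g$, and your own expression $\partial_i\mathcal A=\tau M_i\otimes H_\sigma+H_\tau\otimes\sigma M_i$ depends on $g$. For non-affine $\mathcal A$, the normalized curvature of $\mathcal K=\mathcal A^{-1}$ is $-X_jX_i+\mathcal K^{1/2}(\partial_{ij}\mathcal A)\mathcal K^{1/2}$. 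Here $\partial_{ij}\mathcal A=\sigma\tau(M_i\otimes M_j+M_j\otimes M_i)$, so the extra term equals $E_i^\tau\otimes E_j^\sigma+E_j^\tau\otimes E_i^\sigma$ and exactly cancels the cross-slot part of $X_jX_i$. Equivalently, $\Theta_{ij}(K_1\otimes K_2)=\Theta_{ij}(K_1)\otimes K_2+K_1\otimes\Theta_{ij}(K_2)$, because the mixed terms of $(\partial_i\mathcal K)\mathcal K^{-1}(\partial_j\mathcal K)$ and of $\partial_{ij}\mathcal K$ coincide. The correct normalized curvature is \eqref{eq:normalized-two-slot-curvature}, which has no cross-slot terms. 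The two-slot case is then two copies of the one-slot estimate, each absorbing half of $\sum_{i,j}D^2\mathcal V(B)[M_i,M_j]\langle z_i,z_j\rangle$ via Lemma~\ref{lem:barrier-hessian-coercivity}. With this correction, the rest of your plan is the paper's argument: the boundary/bulk splitting, Lemmas~\ref{lem:cyclic-tensor-bounds} and~\ref{lem:pure-bulk-variance}, and Young's inequality yielding $\alpha\ge32$, $\alpha\beta\ge128$, $q^2\eta\le1/16384$.

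This is not a harmless detour. The inequality you set out to prove is false in general, so no treatment of the spurious cross terms can work, and your Cauchy--Schwarz bound fails. For $P_j=E^\tau_j$, $Q_i=E^\sigma_i$,
$$\sum_{i,j}\langle z_i,(P_j\otimes Q_i)z_j\rangle=\Bigl\langle\sum_i(I\otimes Q_i)z_i,\sum_j(P_j\otimes I)z_j\Bigr\rangle.$$
This is governed by $\opnorm{\sum_jP_j^2}$, not by Gram forms such as $\mathfrak h_\rho$, and $\sum_j(E^\rho_{j,\mathrm{bdry}})^2$ is unbounded as $\opnorm{B}\uparrow1$.

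For a concrete instance, take $\sigma=\tau=\rho$ and $M_0=\sqrt{\eta/2}\,I$. Let $M_j=sF_j$ for $1\le j\le N=d(d+1)/2$, where $(F_j)$ is a Frobenius-orthonormal basis of $\Sym_d(\R)$ and $s^2(d+1)\le\eta$. Evaluate at the point $g$ where $B=-\rho(1-\epsilon)I$, and test with $z_0=0$, $z_j=\vecop(F_j)$. Since $\sum_jF_j^2=\frac{d+1}{2}I$, your cross terms contribute $-(s/\epsilon)^2N(d+1)$. The identity, Hessian and pure-slot terms together are at most $N+O(\alpha q)(s/\epsilon)^2N$. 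So for $d\gg\alpha q$ and $\epsilon\to0$ your block matrix is not even positive semidefinite. In the same example the cross term exceeds the geometric mean of the pure-slot forms by a factor of order $d$.
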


\begin{proof}
We first compute the normalized curvature of $W_{\sigma\tau}$.
Using that $\partial_iB=M_i$ and 
\eqref{eq:scalar-curvature-transformation}, 
 differentiation of
$\mathcal K_{\sigma\tau}
=
H_\tau^{-1}\otimes H_\sigma^{-1}$
gives
\begin{align}
\widetilde\Theta_{ij}^{\sigma\tau}
={}&
\delta_{ij}I_{d^2}
+
D^2\mathcal V(B)[M_i,M_j]I_{d^2}
\nonumber\\
&-
E_j^\tau E_i^\tau\otimes I_d
-
I_d\otimes E_j^\sigma E_i^\sigma.
\label{eq:normalized-two-slot-curvature}
\end{align}
Let
$z_1,\dots,z_n\in\R^d\otimes\R^d$.  Using $E_i^\rho
=
E_{i,\mathrm{bdry}}^\rho
+
E_{i,\mathrm{bulk}}^\rho,$
we expand
\begin{align*}
   \nlsum_{i,j}
\bigl\langle
z_i,
(E_j^\rho E_i^\rho\otimes I_d)z_j
\bigr\rangle
=&
\nlsum_{i,j}
\bigl\langle
z_i,
(
E_{j,\mathrm{bdry}}^\rho
E_{i,\mathrm{bdry}}^\rho
\otimes I_d
)z_j
\bigr\rangle
+
\nlsum_{i,j}
\bigl\langle
z_i,
(
E_{j,\mathrm{bdry}}^\rho
E_{i,\mathrm{bulk}}^\rho
\otimes I_d
)z_j
\bigr\rangle \\
&+
\nlsum_{i,j}
\bigl\langle
z_i,
(
E_{j,\mathrm{bulk}}^\rho
E_{i,\mathrm{bdry}}^\rho
\otimes I_d
)z_j
\bigr\rangle
+
\nlsum_{i,j}
\bigl\langle
z_i,
(
E_{j,\mathrm{bulk}}^\rho
E_{i,\mathrm{bulk}}^\rho
\otimes I_d
)z_j
\bigr\rangle.
\end{align*}
Therefore, using  Lemma~\ref{lem:cyclic-tensor-bounds} and
Lemma~\ref{lem:pure-bulk-variance} we obtain,
\begin{equation}
\Bigl|
\nlsum_{i,j}
\bigl\langle
z_i,
(E_j^\rho E_i^\rho\otimes I_d)z_j
\bigr\rangle
\Bigr|
\le
\mathfrak h_\rho(z)
+
2\sqrt{\mathfrak h_\rho(z)\mathfrak u_\rho(z)}
+
C_{\ref{lem:pure-bulk-variance}}q^2\eta\nlsum_i\norm{z_i}^2.
\label{eq:one-slot-cyclic-bound}
\end{equation}

Recall that $c_0=\frac14$ is the constant appearing in Lemma~\ref{lem:barrier-hessian-coercivity}.  Young's inequality gives
\begin{equation}
2\sqrt{\mathfrak h_\rho\mathfrak u_\rho}
\le \frac{c_0\alpha}{8} \,\mathfrak h_\rho
+ \frac{8}{c_0\alpha} \mathfrak u_\rho,
\label{eq:hard-bulk-young}
\end{equation}
and therefore $\mathfrak h_\rho + 2\sqrt{\mathfrak h_\rho\mathfrak u_\rho} \le (1+\frac{c_0\alpha}{8} )\,\mathfrak h_\rho + \frac{8}{c_0\alpha} \mathfrak u_\rho.$ If $\alpha\ge\frac{8}{c_0}=32$,  then $1\le\frac{c_0\alpha}{8}$, so that $1+\frac{c_0\alpha}{8}\le \frac{c_0\alpha}{4}$.
Likewise if $\alpha\beta\ge \frac{32}{c_0}=128$, then $\frac{8}{c_0\alpha}\le\frac{\beta}{4}$.
Now, under this choice of $\alpha$ and $\beta$ then \eqref{eq:one-slot-cyclic-bound} simplifies to
\begin{equation}
\Bigl|
\nlsum_{i,j}
\bigl\langle
z_i,
(E_j^\rho E_i^\rho\otimes I_d)z_j
\bigr\rangle
\Bigr|
\le
\frac{c_0\alpha}{4}\,\mathfrak h_\rho(z)
+
\frac{\beta}{4}\mathfrak u_\rho(z)
+
C_{\ref{lem:pure-bulk-variance}}q^2\eta\nlsum_i\norm{z_i}^2.
\label{eq:one-slot-final-absorption}
\end{equation}

Now, using
Lemma~\ref{lem:barrier-hessian-coercivity} gives
\begin{equation}
    \nlsum_{i,j}
\bigl\langle
z_i,
(
E_j^\tau E_i^\tau\otimes I_d
+
I_d\otimes E_j^\sigma E_i^\sigma
)z_j
\bigr\rangle
\le
\frac12
\nlsum_{i,j}
D^2\mathcal V(B)[M_i,M_j]
\langle z_i,z_j\rangle
+
2C_{\ref{lem:pure-bulk-variance}}q^2\eta\nlsum_i\norm{z_i}^2.
\label{eq:negative-curvature-absorption}
\end{equation}
Therefore, by substituting \eqref{eq:negative-curvature-absorption} into
\eqref{eq:normalized-two-slot-curvature}, we obtain
\begin{align*}
\nlsum_{i,j}
\langle
z_i,\widetilde\Theta_{ij}^{\sigma\tau}z_j
\rangle
&\ge
\nlsum_i\norm{z_i}^2
+
\frac12
\nlsum_{i,j}
D^2\mathcal V(B)[M_i,M_j]
\langle z_i,z_j\rangle
\\
&\qquad
-
2C_{\ref{lem:pure-bulk-variance}}q^2\eta\nlsum_i\norm{z_i}^2.
\end{align*}
Since $\mathcal V$ is convex, its  Hessian is
nonnegative, and hence $\nlsum_{i,j}
\langle
z_i,\widetilde\Theta_{ij}^{\sigma\tau}z_j
\rangle
\ge
(1-2C_{\ref{lem:pure-bulk-variance}}q^2\eta)
\nlsum_i\norm{z_i}^2.$
Finally, choosing $q^2\eta\le\frac{1}{4C_{\ref{lem:pure-bulk-variance}}}$   ensures that

\[
\nlsum_{i,j}
\langle
z_i,\widetilde\Theta_{ij}^{\sigma\tau}z_j
\rangle
\ge
\frac12\nlsum_i\norm{z_i}^2,
\]

which proves \eqref{eq:two-slot-curvature-gap}. The proof for $W_\rho$ follows by the same argument, which we omit.
\end{proof}

The curvature estimate from Proposition~\ref{prop:two-slot-curvature}
now allows us to apply the matrix-weighted Poincar\'e inequality with respect to the matrix weight $W_{\sigma\tau}$. More specifically, we consider smooth matrix-valued field
$F:\Omega\to\R^{d\times d}$ together with its vectorization $f(g)=\vecop(F(g))\in\R^{d^2}$ on the domain
$\Omega=\{g:\opnorm{B(g)}<1\}$, and our goal  will be to suitably upper bound the following quantity via a matrix weighted Poincar\'e inequality of Section \ref{sec:matrix-poincare},

\begin{equation}
\mathcal Q_{\sigma\tau}(F)
:=
\E_\mu
[
f^\top\mathcal K_{\sigma\tau}f
],
\label{eq:Q-reciprocal}
\end{equation}
where 
\begin{equation}
d\mu(g)
=
\frac{e^{-\mathcal V(B(g))}}
{\E_{\gamma_n}e^{-\mathcal V(B(g))}}
\,d\gamma_n(g).
\label{eq:reciprocal-tilted-measure}
\end{equation}

\begin{proposition}
\label{thm:defective-two-slot}
There exist universal constants $C>0$ and $c_{\mathrm P}\in(0,c_{\mathrm{curv}}]$, where $c_{\mathrm{curv}}$ is as in Proposition \ref{prop:two-slot-curvature}, such that the following holds. 
Suppose that $\alpha\ge\beta$,
$\alpha\ge A_*$,
$\alpha\beta\ge D_*$,
 and $q^2\eta\le c_{\mathrm P}$, where $A_*$ and $D_*$ are as in Proposition \ref{prop:two-slot-curvature}.
Let $\mathcal V(B)=\Tr v_{q,\alpha,\beta}(B)$ be defined as in \eqref{eq:potentialfunction}, and let $B(g)= \nlsum_{i=1}^ng_iM_i$ with symmetric coefficient matrices $M_i$ that satisfy $\nlsum_{i=1}^nM_i^2
\preceq
\eta I_d$. Then,
 for every matrix-valued polynomial $F:\R^n\to\R^{d\times d}$ (all quantities below being finite),
\begin{equation}
\mathcal Q_{\sigma\tau}(F)
\le
C\E_\mu\hsnorm{F}^2
+
C\nlsum_{i=1}^n
\mathcal Q_{\sigma\tau}(\partial_iF),
\label{eq:defective-two-slot}
\end{equation}
where $\mathcal Q_{\sigma\tau}(F)$ is defined as in \eqref{eq:Q-reciprocal}.
\end{proposition}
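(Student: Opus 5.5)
The plan is to apply the matrix-weighted curvature criterion, Lemma~\ref{lem:matrix-weighted-curvature}, to the weight $W_{\sigma\tau}$ of \eqref{eq:two-slot-weight} on the convex domain $\Omega=\{g:\opnorm{B(g)}<1\}$, with $r=d^2$. By Proposition~\ref{prop:two-slot-curvature} (applicable since $c_{\mathrm P}\le c_{\mathrm{curv}}$), the normalized curvature block matrix satisfies $[\widetilde\Theta^{\sigma\tau}_{ij}]\succeq\frac12 I$. Unwinding the normalization \eqref{eq:normalized-two-slot-curvature-definition} gives $\sum_{i,j}z_i^\top\Theta_{ij}(W_{\sigma\tau})z_j\ge\frac12\sum_i z_i^\top W_{\sigma\tau}z_i$, which is \eqref{eq:abstract-curvature-gap} with $\lambda=\frac12$. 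We also check that $\int_\Omega W_{\sigma\tau}$ is finite: $e^{-\mathcal V}\det$-type factors kill the blow-up of $H_\tau^{-1}\otimes H_\sigma^{-1}$ at $\partial\Omega$, since $\alpha\ge\beta$ is large and $e^{-\mathcal V}\le\det(I-B^2)^{\beta}$. Dividing by the normalizing constant $Z=\int e^{-\|g\|^2/2-\mathcal V}$, \eqref{eq:abstract-matrix-poincare} then reads
\[
\E_\mu[f^\top\mathcal K_{\sigma\tau}f]-m^\top\bar K^{-1}m\le 2\nlsum_i\E_\mu[(\partial_if)^\top\mathcal K_{\sigma\tau}(\partial_if)],
\]
where $m=\E_\mu[\mathcal K_{\sigma\tau}f]$ and $\bar K=\E_\mu\mathcal K_{\sigma\tau}$. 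The right-hand side is exactly $2\sum_i\mathcal Q_{\sigma\tau}(\partial_iF)$. For polynomial $F$ all integrals are finite by the same integrability remark.

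It remains to bound the mean term $m^\top\bar K^{-1}m$ by $C\,\E_\mu\hsnorm{F}^2$, and this is where the ``defect'' arises. The natural first attempt is Cauchy–Schwarz: $m^\top\bar K^{-1}m\le\E_\mu[f^\top\mathcal K f]$. This is useless, since it just returns $\mathcal Q_{\sigma\tau}(F)$. Instead I would use $\mathcal K_{\sigma\tau}\succeq\frac14 I$ (as $H_\rho\preceq 2I$), so $\bar K\succeq\frac14 I$ and $m^\top\bar K^{-1}m\le 4\|m\|^2$. Then $\|m\|\le\|\bar K\|_{\mathrm{op}}^{1/2}(\E_\mu f^\top\mathcal K f)^{1/2}$ is again circular. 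A better route is to write $m=\E_\mu[\mathcal K f]$ and bound $\|m\|\le \E_\mu[\|\mathcal K\|_{\mathrm{op}}\|f\|]\le(\E_\mu\|\mathcal K\|_{\mathrm{op}}^2)^{1/2}(\E_\mu\|f\|^2)^{1/2}$. This reduces the problem to a dimension-free bound $\E_\mu\|H_\tau^{-1}\|_{\mathrm{op}}^2\|H_\sigma^{-1}\|_{\mathrm{op}}^2\le C$, i.e.\ a bound on negative moments of $1-\opnorm{B}$ under $\mu$.

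The main obstacle is this moment bound: $\E_\mu(1-\opnorm B)^{-4}\le C$ uniformly in $d$. I would attack it with the scalar one-slot weight trick. Apply the same curvature criterion to $W_\rho$ of \eqref{eq:one-slot-weight} (Proposition~\ref{prop:two-slot-curvature} again gives $\lambda=\frac12$), or integrate by parts directly against the barrier: near a boundary eigenvalue the density carries $(1+\rho\lambda)^{\alpha}$-type decay from the tail $r_q$ with large coefficient $\alpha\ge A_*$. This makes $(1+\rho\lambda_{\min})^{-k}$ integrable with bounds controlled by how likely $\mu$ puts eigenvalues within $a_0/q$ of $\pm1$. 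If a clean uniform bound fails, the fallback is to absorb the mean term differently. Split $f=(f-\E_\mu f)+\E_\mu f$ and use $m^\top\bar K^{-1}m\le 2\,c^\top\bar K c+2(\cdots)$ with constant vector $c=\E_\mu f$. Then bound $c^\top\bar K c\le\|\bar K\|_{\mathrm{op}}\|c\|^2\le\|\bar K\|_{\mathrm{op}}\E_\mu\hsnorm F^2$, and handle the fluctuation part by the Gaussian-type Poincaré for $\mu$ (Brascamp–Lieb, $\mu$ being $1$-strongly log-concave), which again produces $\sum_i\E_\mu\hsnorm{\partial_iF}^2\le 4\sum_i\mathcal Q_{\sigma\tau}(\partial_iF)$. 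Either way, the crux is the dimension-free estimate $\|\E_\mu\mathcal K_{\sigma\tau}\|_{\mathrm{op}}\le C$, which I expect to follow from the barrier decay once $q^2\eta\le c_{\mathrm P}$ is chosen small enough.
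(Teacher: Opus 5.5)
Your first step, applying Lemma~\ref{lem:matrix-weighted-curvature} to $W_{\sigma\tau}$ with $\lambda=\frac12$ to get $\mathcal Q_{\sigma\tau}(F)-m^\top\bar K^{-1}m\le 2\nlsum_i\mathcal Q_{\sigma\tau}(\partial_iF)$, is exactly the paper's. Your treatment of the mean term, however, has a genuine gap.

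\textbf{Your main route rests on a false bound.} It needs $\E_\mu\opnorm{\mathcal K_{\sigma\tau}}^2\le C$ uniformly in $d$, and this fails under the hypotheses. Take $n=d$ and $M_i=\sqrt\eta\,e_ie_i^\top$, with $\eta,q,\alpha,\beta$ fixed. Then $\mu$ is a product of $d$ identical laws for $\zeta_j=\sqrt\eta\,g_j$. Each satisfies $\Prob_\mu\{1+\zeta_j<\varepsilon\}\ge c\,\varepsilon^{\alpha+1}$, where $c=c(\eta,q,\alpha,\beta)>0$ does not depend on $d$. Hence $1+\lambda_{\min}(B)=1+\min_j\zeta_j\le(cd)^{-1/(\alpha+1)}$ with probability at least $1-e^{-1}$. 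Consequently $\E_\mu\opnorm{\mathcal K_{++}}^2=\E_\mu(1+\min_j\zeta_j)^{-4}\gtrsim d^{4/(\alpha+1)}\to\infty$. An operator norm sees the worst of $d$ eigenvalues, so pointwise barrier decay can only give $d$-dependent bounds. Separately, for integrability you should use $e^{-v_{q,\alpha,\beta}(t)}\le C_{\alpha,\beta,q}(1-t^2)^\alpha$, since the exponent $\beta=D/\alpha$ may well be below $2$.

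\textbf{Your fallback does not close.} It correctly isolates a necessary ingredient, $\opnorm{\E_\mu\mathcal K_{\sigma\tau}}\le C$ (test $F$ constant). But the fluctuation piece $\E_\mu[\mathcal K(f-c)]^\top\bar K^{-1}\E_\mu[\mathcal K(f-c)]$ still contains the unbounded $\mathcal K$, while Brascamp--Lieb controls only $\E_\mu\norm{f-c}^2$. Any Cauchy--Schwarz you apply reintroduces either $\E_\mu\opnorm{\mathcal K}^2$ or $\E_\mu[(f-c)^\top\mathcal K(f-c)]$ with coefficient $1$.

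\textbf{The missing idea is that a circular estimate is harmless when its coefficient is small.} The paper writes $m=\E_\mu f+r$ with $r=\E_\mu[(\mathcal K-I)f]$, and pairs $\mathcal K^{-1/2}(\mathcal K-I)z$ against $\mathcal K^{1/2}f$:
\[
\norm{r}^2\le\opnorm[\big]{\E_\mu[(\mathcal K-I)\mathcal K^{-1}(\mathcal K-I)]}\,\mathcal Q_{\sigma\tau}(F)=\opnorm{S-2I+T}\,\mathcal Q_{\sigma\tau}(F)\le C\eta\,\mathcal Q_{\sigma\tau}(F),
\]
where $S=\E_\mu\mathcal K$ and $T=\E_\mu\mathcal A$. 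It then absorbs $C\eta\,\mathcal Q_{\sigma\tau}(F)$ into the left side using $q^2\eta\le c_{\mathrm P}$.

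This needs the two-sided estimate $\opnorm{S-I}\le C\eta$, not merely $\opnorm{S}\le C$. That estimate comes from further Poincar\'e inequalities, not from barrier decay:
\begin{enumerate}
\item The one-slot weight $W_\rho$ with test field $(H_\rho-I)z$ gives $I-S_\rho^{-1}\preceq C\nlsum_iM_iS_\rho M_i$. The large root is excluded by continuity along $s\mapsto sB$.
\item Brascamp--Lieb gives $\opnorm{T-I}\le\eta$.
\item The two-slot inequality with test field $(\mathcal A-I)z$ gives $0\preceq T-S^{-1}\preceq C\eta I$.
\end{enumerate}
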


In the following our goal will be to apply Proposition~\ref{thm:defective-two-slot} iteratively to the matrix valued polynomial 
\[
P(g)=B(g)^{q-1}Y(g),
\]
where $B(g)=\nlsum_{i=1}^n g_iM_i$.

To control Gaussian matrix trace moments that arise in the context of Proposition~\ref{thm:defective-two-slot}, we record the following lemma; see, for example \cite{Tropp2012}.

\begin{lemma}[Gaussian matrix trace moment]
\label{lem:gaussian-trace-moment}
Let $Z=\nlsum_i h_iM_i$
be a self-adjoint $d\times d$ Gaussian matrix series, where the $h_i$
are independent standard Gaussian random variables, and suppose that $\nlsum_iM_i^2\preceq\nu I_d.$
Then, for every integer $p\ge1$,
\begin{equation}
\E\Tr|Z|^{2p}
\le
d(Cp\nu)^p,
\label{eq:gaussian-trace-moment}
\end{equation}
where $C>0$ is a universal constant.
\end{lemma}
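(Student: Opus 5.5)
Here $Z$ is a Gaussian matrix series, and the claim is the standard noncommutative Khintchine-type moment bound with a crude constant: $\E\Tr|Z|^{2p}\le d(Cp\nu)^p$. The plan is to prove it by Gaussian integration by parts combined with an inductive moment recursion. Since $Z$ is symmetric, $\Tr|Z|^{2p}=\Tr Z^{2p}$.

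\textbf{Integration by parts.} Write $m_k=\E\Tr Z^{2k}$. Gaussian integration by parts, $\E[h_i f(h)]=\E[\partial_i f(h)]$, gives
\[
m_p=\nlsum_i\E\Tr\bigl(M_i\,h_iZ^{2p-1}\bigr)=\nlsum_{i}\nlsum_{k=0}^{2p-2}\E\Tr\bigl(M_iZ^kM_iZ^{2p-2-k}\bigr).
\]

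\textbf{Bounding each term.} Each summand should be controlled by $\nu\,m_{p-1}$. We use the operator inequality $\nlsum_i M_iXM_i\preceq\nu\opnorm{X}I$ only in a weaker trace form. For the terms $k=0$ and $k=2p-2$ the bound is immediate:
\[
\E\Tr\Bigl(\nlsum_iM_i^2\,Z^{2p-2}\Bigr)\le\nu\,m_{p-1},
\]
because $Z^{2p-2}\succeq0$ and $\nlsum_iM_i^2\preceq\nu I$. For general $k$, apply Hölder for Schatten norms to $\Tr(M_iZ^kM_iZ^{l})$ with $k+l=2p-2$. More precisely, use the known interpolation inequality
\[
\bigl|\Tr(M_iZ^kM_iZ^l)\bigr|\le\tfrac12\bigl(\Tr M_i^2|Z|^{2p-2}+\Tr M_i|Z|^{2p-2}M_i\bigr),
\]
which follows from convexity of $k\mapsto$ the log of the trace (the standard Lieb-type argument: the function $\theta\mapsto\Tr(M|Z|^{\theta}M|Z|^{2p-2-\theta})$ is log-convex, hence maximized at the endpoints). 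Both endpoint terms equal $\Tr(M_i^2|Z|^{2p-2})$. Summing over $i$ then bounds each term by $\nu\,m_{p-1}$.

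\textbf{Recursion.} The two steps give $m_p\le(2p-1)\nu\,m_{p-1}$. Iterating,
\[
m_p\le(2p-1)!!\,\nu^p\,m_0=(2p-1)!!\,\nu^p d\le d(2p\nu)^p,
\]
which is the claim with $C=2$.

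The main obstacle is the middle-term inequality: justifying that $|\Tr(MZ^kMZ^l)|$ is bounded by its endpoint values. I would try the log-convexity/Hölder argument first, diagonalizing $Z$ so the trace becomes $\nlsum_{a,b}|M_{ab}|^2\lambda_a^k\lambda_b^l$. The bound then reduces to the scalar AM–GM estimate $|\lambda_a|^k|\lambda_b|^l\le\frac{k}{k+l}|\lambda_a|^{k+l}+\frac{l}{k+l}|\lambda_b|^{k+l}$, and using the symmetry $|M_{ab}|=|M_{ba}|$ makes this fully elementary. Alternatively, the whole statement can simply be quoted from \cite{Tropp2012}, or obtained from noncommutative Khintchine (Pisier–Buchholz).
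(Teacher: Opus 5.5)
Your proof is correct, and it takes a genuinely different route from the paper, which proves nothing here and only points to \cite{Tropp2012}.

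\emph{Checking your argument.} The integration-by-parts identity is right. The step you flag as the obstacle is already settled by your own diagonalization. With $Z=\nlsum_a\lambda_ae_ae_a^\top$ and $M_{ab}=e_a^\top Me_b$, one has $|\Tr(MZ^kMZ^l)|\le\nlsum_{a,b}M_{ab}^2|\lambda_a|^k|\lambda_b|^l$. Absolute values must be taken at this point, since $Z$ has eigenvalues of both signs. Weighted AM--GM plus $M_{ab}=M_{ba}$ then bounds this by $\Tr(M^2Z^{2p-2})$. The Schatten--H\"older and log-convexity remarks are unnecessary. Summing over $i$ with $Z^{2p-2}\succeq0$ and $\nlsum_iM_i^2\preceq\nu I$ gives $m_p\le(2p-1)\nu\,m_{p-1}$, hence $m_p\le(2p-1)!!\,\nu^pd$.

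\emph{What your route buys.} It is elementary: it needs no Lieb concavity theorem, which is what underlies Tropp's trace mgf bound $\E\Tr e^{\theta Z}\le d\,e^{\theta^2\nu/2}$. It also lands directly on the form the paper needs, with $d$ only as a prefactor. The constant is $(2p-1)!!\le p^p$, so $C=1$ already works.

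\emph{What the citation requires.} From the reference one has to go through the trace mgf, bound $\Tr Z^{2p}\le(2p)!\,\theta^{-2p}\Tr\cosh(\theta Z)$, and optimize $\theta$, which gives e.g.\ $C=e^2$. Merely integrating Tropp's tail bound would instead give $d\,(C\nu(p+\log d))^p$. That would not suffice in Lemma~\ref{lem:polynomial-derivatives}, where $p=q\asymp1+\log(2d/n)$ can be far smaller than $\log d$.

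\emph{What the citation buys.} It is brief, and the same mgf bound covers Rademacher series and tail estimates at no extra cost. Your integration-by-parts argument is specific to Gaussian coefficients.
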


We next estimate the moments of derivatives of $P=B^{q-1}Y$.  
For a matrix-valued field $P$, we use the notation
\begin{equation}
\norm{\nabla^jP(g)}_{\mathrm{HS}}^2
:=
\nlsum_{i_1,\dots,i_j=1}^n
\hsnorm{
\partial_{i_1}\cdots\partial_{i_j}P(g)
}^2,
\label{eq:derivative-HS-norm}
\end{equation}
with the convention
$\norm{\nabla^0P}_{\mathrm{HS}}=\hsnorm{P}$.

\begin{lemma}
\label{lem:polynomial-derivatives}
Let $B(g)=\nlsum_{i=1}^n g_iM_i$ and let $Y(g)=\nlsum_{i=1}^n g_iN_i$ have symmetric 
 coefficient matrices  and that satisfy $\nlsum_{i=1}^nM_i^2\preceq\eta I_d$ and $\nlsum_{i=1}^nN_i^2\preceq C_0\eta I_d$
for some universal constant $C_0>0$.  Let $P(g)=B(g)^{q-1}Y(g).$
Then, for every $0\le j\le q$,
\begin{equation}
\E_\mu\norm{\nabla^jP}_{\mathrm{HS}}^2
\le
q^{2j}d(Cq\eta)^q
\le
d(Cq^3\eta)^q,
\label{eq:polynomial-derivative-bound}
\end{equation}
where $C$ is a universal constant that depends only on $C_0$.
\end{lemma}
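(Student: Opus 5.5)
The plan is to reduce everything to Gaussian trace moments in two steps. First, a combinatorial expansion of $\nabla^jP$ into products of $B$, $Y$ and coefficient matrices. Second, a transfer of the resulting $\mu$-moments to $\gamma_n$-moments, after which Lemma~\ref{lem:gaussian-trace-moment} applies.

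\textbf{Step 1 (expansion).} Since $\partial_iB=M_i$ and $\partial_iY=N_i$ are constant, $\partial_{i_1}\cdots\partial_{i_j}(B^{q-1}Y)$ is a sum over the ways of choosing $j$ of the $q$ factors of the word $B\cdots BY$ and replacing them, in some order, by the coefficients $M_{i_k}$ (or $N_{i_k}$ if the $Y$-slot is chosen). There are at most $q^j$ such ordered choices. By Cauchy--Schwarz,
\[
\norm{\nabla^jP}_{\mathrm{HS}}^2\le q^j\sum_{\text{words }w}\ \nlsum_{i_1,\dots,i_j}\hsnorm{w(i_1,\dots,i_j)}^2 .
\]
For a single word I will sum out the indices one at a time. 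For any matrices $X,Z$ we have
\[
\nlsum_i\hsnorm{XM_iZ}^2=\Tr\Bigl(X^\top X\nlsum_iM_iZZ^\top M_i\Bigr),
\qquad
\nlsum_iM_iZZ^\top M_i\preceq \opnorm{ZZ^\top}\,\eta I,
\]
and similarly for $N_i$ with $C_0\eta$. Each removed slot therefore costs a factor $\max(1,C_0)\eta$ and replaces the right-hand part of the word by its operator norm. After $j$ removals, what remains is a product of Schatten/operator norms of sub-words consisting of at most $q-j$ factors from $\{B,Y\}$. Every operator norm is at most the corresponding Schatten-$2m$ norm, and the trace Hölder inequality then bounds the remainder by a product of quantities $(\Tr|B|^{2m})^{\theta}$ and $(\Tr|Y|^{2m})^{\theta'}$ with $m\le q-j$. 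The total degree is $2(q-j)$, and the stray factors of $d$ multiply to at most $d$ because the Hölder exponents sum to one. A final Hölder step in expectation reduces matters to bounding $\E_\mu\Tr|B|^{2m}$ and $\E_\mu\Tr|Y|^{2m}$ for $m\le q$.

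\textbf{Step 2 (transfer from $\mu$ to $\gamma_n$).} This is the main point. The density of $\mu$ with respect to $\gamma_n$ is $e^{-\mathcal V(B(g))}/Z$, and $g\mapsto\mathcal V(B(g))$ is convex (with value $+\infty$ outside $\Omega$) because $\mathcal V$ is a convex spectral function and $B$ is linear. Moreover $\mu$ is even, since $v_{q,\alpha,\beta}$ is even, so its barycenter is $0$. The functions $g\mapsto\Tr|B(g)|^{2m}$ and $g\mapsto\Tr|Y(g)|^{2m}$ are convex, being traces of the convex even functions $t^{2m}$ of linear matrix maps. Hargé's inequality (for a log-concave perturbation $e^{-V}\gamma_n$ with $V$ convex and any convex $f$, $\E_\mu f(g-\E_\mu g)\le\E_{\gamma_n}f$) therefore gives
\[
\E_\mu\Tr|B|^{2m}\le\E_{\gamma_n}\Tr|B|^{2m}\le d(Cm\eta)^m,
\qquad
\E_\mu\Tr|Y|^{2m}\le d(CmC_0\eta)^m,
\]
where the second inequality in each case is Lemma~\ref{lem:gaussian-trace-moment}. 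If one prefers to avoid Hargé, a fallback is the Caffarelli contraction from $\gamma_n$ onto $\mu$ combined with Gaussian hypercontractivity, at the cost of worse constants.

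\textbf{Step 3 (assembly).} Collecting the factors: $q^j$ from counting ordered choices, another $q^j$ from Cauchy--Schwarz over words, $\eta^j$ from the removed slots, and $d(Cq\eta)^{q-j}$ from the remaining moments with $m\le q$. This gives
\[
\E_\mu\norm{\nabla^jP}_{\mathrm{HS}}^2\le q^{2j}\,d\,(C'q\eta)^{q-j}\eta^j\le q^{2j}d(C'q\eta)^q,
\]
where the last step absorbs $\eta^j\le (C'q\eta)^j$ provided $C'\ge1$. Finally, since $q^{2j}\le q^{2q}$ for $j\le q$, we get $q^{2j}d(C'q\eta)^q\le d(C'q^3\eta)^q$.

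The delicate bookkeeping is in Step 1. One must check that the Hölder exponents chosen there keep the power of $d$ at exactly one, and in particular that $\opnorm{\cdot}$ of a sub-word is not bounded crudely by $\hsnorm{\cdot}$, which could introduce extra factors of $d$ through normalization. I expect Step 2 to be the conceptual crux: one has to verify that Hargé's hypotheses tolerate a potential $V$ that takes the value $+\infty$, which I would handle by approximating with finite convex potentials and passing to the limit by monotone convergence.
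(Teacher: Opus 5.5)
There is a genuine gap in Step~1, exactly at the bookkeeping you flagged as delicate. When you sum out a coefficient slot using $\nlsum_iM_iZZ^\top M_i\preceq\opnorm{Z}^2\eta I$, the remainder has the form $\hsnorm{W_0}^2\prod_k\opnorm{W_k}^2$. An interior coefficient with nonempty sub-words on both sides always produces at least one operator-norm factor. Such a remainder cannot be bounded by $d(Cq\eta)^{q-j}$ with $C$ independent of $d$ if all you have are the moments $\E\Tr|B|^{2m}$, $\E\Tr|Y|^{2m}$ with $m\le q$.

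In Schatten--H\"older terms, $\hsnorm{W_0}=\|W_0\|_{S_2}$ already uses the whole budget $1/2$ of reciprocal exponents, so after passing to trace moments it alone carries the full power $d^{1}$. Replacing each $\opnorm{W_k}$ by a Schatten-$2m$ norm adds a further $d^{1/m}$ per factor. So the exponents do \emph{not} sum to one, and you end up with $d^{1+1/m}$ or worse.

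This is not just loose bookkeeping. Take $n=d$ and $M_i=N_i=\sqrt\eta\,e_ie_i^\top$, so that $Y=B$, together with $q=3$, $j=1$, and the word $BM_iY$. Your first step bounds $\nlsum_i\hsnorm{BM_iB}^2$ by $\eta\,\hsnorm{B}^2\opnorm{B}^2=\eta^3(\nlsum_kg_k^2)\max_kg_k^2$, whose Gaussian expectation is of order $d\eta^3\log d$. The actual quantity is $\nlsum_i\hsnorm{BM_iB}^2=\eta^3\nlsum_kg_k^4$, with expectation $3d\eta^3$.

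So operator norms of interior sub-words cost $\log d$, not $q$, and this cannot be absorbed into a universal $C$. It matters downstream too: in the application $q\asymp 1+\log(2d/n)$ can stay bounded while $d\to\infty$.

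The paper never estimates the coefficient slots away. It writes $\norm{\nabla^jP(g)}_{\mathrm{HS}}^2=\E_h\hsnorm{D^jP(g)[h_1,\dots,h_j]}^2$ with independent standard Gaussian directions $h_r$. Equivalently, $\nlsum_i\hsnorm{XM_iZ}^2=\E_h\hsnorm{XB(h)Z}^2$: you keep $B(h)$ inside the trace instead of bounding it by an operator norm. Every differentiated slot then becomes a Gaussian series $B(h_r)$ or $Y(h_r)$, so each of the at most $q^j$ terms is a product $Z_1\cdots Z_q$ of exactly $q$ linear series.

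The bound $\hsnorm{Z_1\cdots Z_q}^2\le\prod_\ell(\Tr|Z_\ell|^{2q})^{1/q}$, followed by H\"older in expectation, then gives $d(Cq\eta)^q$ with a single power of $d$. The $h$-factors are handled by Lemma~\ref{lem:gaussian-trace-moment}, and the $g$-factors by Harg\'e exactly as in your Step~2. The price is a factor $Cq\eta$ instead of $\eta$ per differentiated slot, which is harmless for the stated bound. Your Step~2 (the Harg\'e transfer via evenness and convexity of the tilt) and the $q^{2j}$ counting in Step~3 are otherwise in line with the paper.
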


 We now apply Proposition~\ref{thm:defective-two-slot} iteratively to the matrix valued polynomial 
$P(g)=B(g)^{q-1}Y(g),$ where $B(g)=\nlsum_{i=1}^n g_iM_i$ and $Y(g)=\nlsum_{i=1}^n g_iN_i$.

\begin{lemma}
\label{lem:polynomial-two-slot}
For every fixed $C_0>0$, there exists a constant
$C=C(C_0)>0$ such that the following holds.
Suppose that $\alpha\ge\beta$,
$\alpha\ge A_*$,
$\alpha\beta\ge D_*$,
$q^2\eta\le c_{\mathrm{P}}$, where $A_*$ and $D_*$  are as in Proposition \ref{prop:two-slot-curvature} and $c_P$ is defined as in Proposition \ref{thm:defective-two-slot}.
Let $B(g)=\nlsum_{i=1}^n g_iM_i$ and let $Y(g)=\nlsum_{i=1}^n g_iN_i$ have symmetric 
 coefficient matrices  and that satisfy $\nlsum_{i=1}^nM_i^2\preceq\eta I_d$ and $\nlsum_{i=1}^nN_i^2\preceq C_0\eta I_d$.   Then, we have
\begin{equation}
\mathcal Q_{\sigma\tau}(B^{q-1}Y)
\le
d(Cq^3\eta)^q, \qquad \mathcal Q_{\sigma\tau}(Y)
\le
Cd\eta,
\label{eq:polynomial-two-slot}
\end{equation}
where $\mathcal Q_{\sigma\tau}$ is defined as in \eqref{eq:Q-reciprocal}.
\end{lemma}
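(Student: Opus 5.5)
We iterate Proposition~\ref{thm:defective-two-slot} $q$ times, starting from $P=B^{q-1}Y$, and close the recursion with Lemma~\ref{lem:polynomial-derivatives}.

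\textbf{Step 1: unrolling the recursion.} For a multi-index $(i_1,\dots,i_j)$ write $\partial^{(j)}P=\partial_{i_1}\cdots\partial_{i_j}P$. Applying \eqref{eq:defective-two-slot} to each such derivative and summing over indices gives
\[
\nlsum_{i_1,\dots,i_j}\mathcal Q_{\sigma\tau}(\partial^{(j)}P)
\le
C\,\E_\mu\norm{\nabla^jP}_{\mathrm{HS}}^2
+
C\nlsum_{i_1,\dots,i_{j+1}}\mathcal Q_{\sigma\tau}(\partial^{(j+1)}P).
\]
This is legitimate because every derivative of a polynomial is again a matrix-valued polynomial. Iterating from $j=0$ to $j=q$ yields
\[
\mathcal Q_{\sigma\tau}(P)
\le
\nlsum_{j=0}^{q}C^{j+1}\,\E_\mu\norm{\nabla^jP}_{\mathrm{HS}}^2
+
C^{q+1}\nlsum_{i_1,\dots,i_{q+1}}\mathcal Q_{\sigma\tau}(\partial^{(q+1)}P).
\]
Since $P$ has degree $q$ in $g$, all $(q+1)$-st derivatives vanish, so the remainder term is exactly zero.

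\textbf{Step 2: inserting the moment bounds.} By Lemma~\ref{lem:polynomial-derivatives}, for $0\le j\le q$,
\[
\E_\mu\norm{\nabla^jP}_{\mathrm{HS}}^2\le q^{2j}d(C'q\eta)^q .
\]
Therefore
\[
\mathcal Q_{\sigma\tau}(P)
\le
d(C'q\eta)^q\nlsum_{j=0}^q C^{j+1}q^{2j}
\le
(q+1)\,C^{q+1}q^{2q}\,d(C'q\eta)^q .
\]
Using $q+1\le 2^q$ and $C^{q+1}\le (C^2)^q$ (valid for $q\ge1$, $C\ge1$), all constants can be absorbed into a single $C''^q$. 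This gives $\mathcal Q_{\sigma\tau}(P)\le d(C''q^3\eta)^q$, with $C''$ depending only on $C_0$ and the constant of Proposition~\ref{thm:defective-two-slot}.

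\textbf{Step 3: the bound for $Y$.} The same argument applies to $Y$, which is linear. One application of \eqref{eq:defective-two-slot} gives
\[
\mathcal Q_{\sigma\tau}(Y)\le C\,\E_\mu\hsnorm{Y}^2+C\nlsum_i\mathcal Q_{\sigma\tau}(N_i).
\]
Since the $N_i$ are constant, their derivatives vanish, and a second application yields $\mathcal Q_{\sigma\tau}(N_i)\le C\hsnorm{N_i}^2$. Summing, $\nlsum_i\hsnorm{N_i}^2=\Tr\nlsum_iN_i^2\le C_0 d\eta$. Similarly, $\E_\mu\hsnorm{Y}^2\le Cd\eta$ is the case $q=1$, $j=0$ of Lemma~\ref{lem:polynomial-derivatives}. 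Together these give $\mathcal Q_{\sigma\tau}(Y)\le Cd\eta$.

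\textbf{Main obstacle.} The only delicate point is bookkeeping. Summing over multi-indices must reproduce exactly the $\norm{\nabla^j P}_{\mathrm{HS}}$ norms of \eqref{eq:derivative-HS-norm}, and the geometric factors $C^{j}$ must stay exponential in $q$ with no factorial loss. The factorial-type growth $q^{2j}$ is already built into Lemma~\ref{lem:polynomial-derivatives}, so the result is the claimed $q^3$ scaling. Finiteness of all the $\mathcal Q_{\sigma\tau}$ is guaranteed because $\mathcal K_{\sigma\tau}e^{-\mathcal V}$ is bounded by the determinantal barrier.
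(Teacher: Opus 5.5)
Your proposal is correct and follows the paper's proof essentially step by step. You unroll Proposition~\ref{thm:defective-two-slot} until the degree-$(q+1)$ remainder vanishes, insert Lemma~\ref{lem:polynomial-derivatives}, and handle $Y$ by one application of the proposition plus the bound $\mathcal Q_{\sigma\tau}(N_i)\le C\hsnorm{N_i}^2$ for the constant fields. The only cosmetic difference is how you bound $\E_\mu\hsnorm{Y}^2$: you invoke the ``$q=1$'' case of Lemma~\ref{lem:polynomial-derivatives}, which is not literally an instance since $\mu$ is built with the potential's fixed $q\ge2$. The paper instead cites Harg\'e's convex domination inequality directly, which is exactly what that lemma's proof uses, so nothing is lost.
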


\begin{proof}Let $P(g)=B(g)^{q-1}Y(g).$
Applying Proposition~\ref{thm:defective-two-slot} repeatedly to  a matrix-valued
polynomial $P$ of degree $r$, gives 
\begin{equation}
\mathcal Q_{\sigma\tau}(P)
\le
\nlsum_{j=0}^{r}
C^{j+1}
\E_\mu\norm{\nabla^jP}_{\mathrm{HS}}^2.
\label{eq:defective-iteration}
\end{equation}
Specifically, for $P=B^{q-1}Y$, using Lemma~\ref{lem:polynomial-derivatives}, we obtain from \eqref{eq:defective-iteration} that, 
\begin{align*}
\mathcal Q_{\sigma\tau}(B^{q-1}Y)
&\le
\nlsum_{j=0}^{q}
C^{j+1}
\E_\mu\norm{\nabla^jP}_{\mathrm{HS}}^2\\
&\le
\nlsum_{j=0}^{q}
C^{j+1}
d(Cq^3\eta)^q\le d(C^\prime q^3\eta)^q,
\end{align*}
where $C^\prime$ is some universal constant. 

Moreover, using again Proposition~\ref{thm:defective-two-slot} gives
\begin{equation}
\mathcal Q_{\sigma\tau}(Y)
\le
C\E_\mu\hsnorm{Y}^2
+
C\nlsum_{i=1}^n
\mathcal Q_{\sigma\tau}(N_i).
\label{eq:linear-two-slot-first}
\end{equation}

Now, using $\nlsum_{i=1}^nN_i^2\preceq C_0\eta I_d$ and Harg\'e's convex domination inequality \cite{Harge2004} we obtain,
\[
\E_\mu\hsnorm{Y}^2
\le
\E_{\gamma_n}\hsnorm{Y}^2
=
\nlsum_{i=1}^n\hsnorm{N_i}^2 
\le
C_0d\eta.
\]

Finally, applying Proposition~\ref{thm:defective-two-slot} once more  to $F=N_i$, gives $\mathcal Q_{\sigma\tau}(N_i)
\le
C\hsnorm{N_i}^2$, and therefore we have,
\[
\mathcal Q_{\sigma\tau}(Y)
\le
C^\prime d\eta
+
C\nlsum_i\hsnorm{N_i}^2 \le C^\prime d\eta,
\]
which concludes our proof.
\end{proof}

\subsection{Partition-Function Bounds via Interpolation}
\label{sec:interpolation}

We now return to the interpolation argument that we outlined at the beginning of Section \ref{sec:hereditary-small-ball}. Recall that $B(g) = \nlsum_{i=1}^n g_i M_i$ is the Gaussian matrix series for which we want to establish the matrix small ball estimate, and that the interpolation is given by $B_t(g) = B_d(g) + tB_{\mathrm{o}}(g)$, where $B_d$ denotes the diagonal part of $B$ while $B_{\mathrm o}= B-B_d$ refers to the off-diagonal part. Similarly, we also consider the diagonal and off-diagonal decomposition at the level of the coefficient matrices $M_i = M_{i,d} + M_{i,o}$, and we denote with $M_i(t)=M_{i,d} + tM_{i,o} $. Moreover, we will also use the notation $Y(g)=B_{\mathrm o}(g) = \nlsum_{i=1}^n g_iM_{i,\mathrm o} $. We denote the  partition function along the interpolation by $Z_t=\E e^{-\mathcal V(B_t)}$, and let $F(t)=\log Z_t$. In the lemma below we establish  regularity properties of $Z_t$ and $F(t)$. 

\begin{lemma}[Regularity of the partition function]
\label{lem:regularity}
Assume $\alpha>4$. Let
$\mathcal V$ be the potential defined in \eqref{eq:potentialfunction}. Then $Z,F\in C^2([0,1])$, and
\begin{equation}
F'(t)
=
-\E_{\mu_t}D\mathcal V(B_t)[Y],
\label{eq:log-partition-first-derivative}
\end{equation}
\begin{equation}
F''(t)
=
\Var_{\mu_t}
(
D\mathcal V(B_t)[Y]
)
-
\E_{\mu_t}
D^2\mathcal V(B_t)[Y,Y],
\label{eq:F-second-derivative}
\end{equation}
where $d\mu_t(g)
=
\frac{e^{-\mathcal V(B_t(g))}}{Z_t}\,d\gamma_n(g)$.
\end{lemma}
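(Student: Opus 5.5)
The plan is to justify differentiating $Z_t=\E_{\gamma_n}[e^{-\mathcal V(B_t(g))}]$ twice under the integral sign by dominated convergence, and then derive the formulas for $F=\log Z_t$ by the quotient rule.

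\textbf{Pointwise derivatives.} Write $\phi_t(g)=e^{-\mathcal V(B_t(g))}$, extended by $0$ off $\Omega_t=\{g:\opnorm{B_t(g)}<1\}$. On the open set $\Omega_t$ the map $t\mapsto\mathcal V(B_t)$ is smooth, since $v_{q,\alpha,\beta}$ is real-analytic on $(-1,1)$ and $\partial_tB_t=Y$. Hence
\[
\partial_t\phi_t=-D\mathcal V(B_t)[Y]\,\phi_t,
\qquad
\partial_t^2\phi_t=\bigl((D\mathcal V(B_t)[Y])^2-D^2\mathcal V(B_t)[Y,Y]\bigr)\phi_t .
\]
Because $\phi_t\to0$ as $\opnorm{B_t}\uparrow1$, these formulas extend continuously by $0$ across $\partial\Omega_t$. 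This gives, for each fixed $g$, a $C^2$ function of $t$ on all of $[0,1]$.

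\textbf{Continuous extension at the boundary.} Near the boundary, $v$ behaves like $\alpha\ell$, so $\phi_t$ is comparable to $\prod_\rho\det(H_\rho)^{\alpha}$ up to bounded factors. Indeed, $v-\alpha\ell=(\beta-\alpha)\sum_{k<q}t^{2k}/k$ is bounded on $[-1,1]$. The derivative terms are controlled as follows:
\begin{itemize}
\item $D\mathcal V[Y]=\Tr(v'(B_t)Y)$ is bounded by $\opnorm{Y}\,\Tr|v'(B_t)|\lesssim \alpha\opnorm{Y}\sum_\rho\Tr H_\rho^{-1}$.
\item By Lemma~\ref{lem:spectral-hessian} and \eqref{eq:log-barrier-divided-difference}, together with Lemma~\ref{lem:tail-divided-difference}, the divided difference $v'^{[1]}$ is at most a constant (depending on $q,\alpha$) times $\ell'^{[1]}$. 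Hence $D^2\mathcal V[Y,Y]\lesssim \opnorm{Y}^2\sum_\rho(\Tr H_\rho^{-1})^2$.
\end{itemize}
Each eigenvalue gap $1+\rho\lambda_a$ enters $\phi_t$ with power at least $\alpha>4$, while it enters the derivative expressions with power at most $-2$. So $|\partial_t\phi_t|$ and $|\partial_t^2\phi_t|$ are bounded by $C(1+\opnorm{Y}^2)\,d^2\prod_{\rho,a}(1+\rho\lambda_a)^{\alpha-2}$. In particular they tend to $0$ at the boundary. This is precisely where the hypothesis $\alpha>4$ is used (in fact $\alpha>2$ already suffices; the extra room gives continuity of the extended derivatives).

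\textbf{Domination and conclusion.} The bounds above give a dominating function $C_{d,q,\alpha}(1+\norm{g}^2)$, uniform in $t\in[0,1]$, since $\opnorm{Y(g)}\le\sum_i|g_i|\opnorm{M_{i,\mathrm o}}$. This function is $\gamma_n$-integrable. The main obstacle is that $\Omega_t$ moves with $t$, so one cannot simply differentiate on a fixed domain. I would handle this by working with the globally defined extensions $\phi_t(g)$. For fixed $g$ these are $C^1$ in $t$, with derivative equal to the extended $\partial_t\phi_t$, because they are $C^2$ on the open set $\{t:\opnorm{B_t(g)}<1\}$, vanish identically on its complement, and have derivatives tending to $0$ at its boundary points.

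The mean value theorem combined with dominated convergence then gives $Z\in C^2([0,1])$, with
\[
Z_t'=-\E_{\gamma_n}\bigl[D\mathcal V(B_t)[Y]\,\phi_t\bigr],
\qquad
Z_t''=\E_{\gamma_n}\bigl[\bigl((D\mathcal V(B_t)[Y])^2-D^2\mathcal V(B_t)[Y,Y]\bigr)\phi_t\bigr].
\]
Continuity of $Z''$ in $t$ follows again from dominated convergence.

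Finally, $Z_t>0$, because $\Omega_t$ contains a neighborhood of $g=0$. Therefore $F=\log Z\in C^2([0,1])$, with $F'=Z'/Z$ and $F''=Z''/Z-(Z'/Z)^2$. Dividing the formulas above by $Z_t$ turns each $\E_{\gamma_n}[\,\cdot\,\phi_t]$ into $\E_{\mu_t}$. This yields \eqref{eq:log-partition-first-derivative} and \eqref{eq:F-second-derivative}, the latter with $\Var_{\mu_t}(D\mathcal V(B_t)[Y])=\E_{\mu_t}\bigl[(D\mathcal V(B_t)[Y])^2\bigr]-\bigl(\E_{\mu_t}D\mathcal V(B_t)[Y]\bigr)^2$.
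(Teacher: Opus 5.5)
Your proposal is correct and follows essentially the same route as the paper. Both arguments use the pointwise chain-rule formulas for $\partial_t w_t$ and $\partial_t^2 w_t$, and both use boundary decay of the weight like $(\text{gap})^{\alpha}$ to beat the at most two inverse powers of the gap coming from $D\mathcal V$ and $D^2\mathcal V$, so that the zero extension is $C^2$ in $t$. They then dominate by the Gaussian-integrable function $C(1+\norm{g}^2)$ and finish with the quotient rule for $F=\log Z$. The only cosmetic difference is that you control the derivatives through $\Tr H_\rho^{-1}$ and the full product of eigenvalue gaps, whereas the paper works with the single gap $\delta_t(g)=1-\opnorm{B_t(g)}$.
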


Recall, the goal of the interpolation argument is to establish $F''(t)\ge -O(\delta n)$. However, since $\Var_{\mu_t}\!(D\mathcal V(B_t)[Y])\ge0$ it suffices to prove $\E_{\mu_t}D^2\mathcal V(B_t)[Y,Y]\le O(\delta n)$.

\begin{lemma}[Curvature bound along interpolation path]
\label{lem:expected-interpolation-hessian}
There exists a universal constant $C>0$ such that the following holds.
Suppose that $\alpha\ge\beta$,
$\alpha\ge A_*$,
$\alpha\beta\ge D_*$,
$q^2\eta\le c_{\mathrm{P}}$, where $A_*$ and $D_*$  are as in Proposition \ref{prop:two-slot-curvature} and  $c_P$ is defined as in Proposition \ref{thm:defective-two-slot}.
Let $\mathcal V(B)=\Tr v_{q,\alpha,\beta}(B)$ be defined as in \eqref{eq:potentialfunction}, and
let $B(g)=\nlsum_{i=1}^n g_iM_i$  have symmetric coefficient matrices  that satisfy $\nlsum_{i=1}^nM_i^2\preceq\eta I_d$. Moreover, let $Y(g)=\nlsum_{i=1}^n g_iM_{i,\mathrm o}$, and denote  $B_t(g) = B_d(g) + tB_{\mathrm{o}}(g)$. Then, we have uniformly for $t\in[0,1]$,
\begin{equation}
\E_{\mu_t}D^2\mathcal V(B_t)[Y,Y]
\le
C\beta d\eta
+
C\alpha qd(Cq^3\eta)^q,
\label{eq:expected-interpolation-hessian}
\end{equation}
where   $d\mu_t(g)$ denotes the tilted Gaussian measure $d\mu_t(g)=\frac{e^{-\mathcal V(B_t(g))}}{Z_t}\,d\gamma_n(g)$.
\end{lemma}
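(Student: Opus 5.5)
We split the Hessian of $\mathcal V=\Tr v_{q,\alpha,\beta}$ into its low-degree and tail parts, $v_{q,\alpha,\beta}=\beta\,\ell+(\alpha-\beta)r_q$, and bound the two contributions separately. Fix $t\in[0,1]$. The coefficient matrices $M_i(t)=M_{i,\mathrm d}+tM_{i,\mathrm o}$ still satisfy a variance bound of the form $\nlsum_i M_i(t)^2\preceq C\eta I_d$. Indeed, $M_{i,\mathrm d}$ is a pinching of $M_i$ (as in the footnote in Section~\ref{sec:proofsmallball}), so $\nlsum_i M_{i,\mathrm d}^2\preceq\eta I$. Similarly, $M_{i,\mathrm o}=M_i-M_{i,\mathrm d}$ gives $\nlsum_i M_{i,\mathrm o}^2\preceq 2\nlsum_i(M_i^2+M_{i,\mathrm d}^2)\preceq 4\eta I$. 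After adjusting constants (or running the argument with $\eta$ replaced by $C\eta$, absorbed into $c_{\mathrm P}$), we may therefore apply all results of Section~\ref{sec:analyticpotential} to $B_t$, with $\mu_t$ in the role of $\mu$ and $Y=\nlsum_i g_iM_{i,\mathrm o}$ satisfying the hypothesis of Lemma~\ref{lem:polynomial-two-slot} with $C_0=4$.

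\textbf{Log-barrier part.} By \eqref{eq:log-barrier-hessian},
\[
D^2\Tr\ell(B_t)[Y,Y]=\nlsum_\rho\Tr\bigl(H_\rho^{-1}YH_\rho^{-1}Y\bigr)=\nlsum_\rho \vecop(Y)^\top(H_\rho^{-1}\otimes H_\rho^{-1})\vecop(Y),
\]
which is exactly $\nlsum_\rho\mathcal Q_{\rho\rho}(Y)$ for the measure $\mu_t$. Lemma~\ref{lem:polynomial-two-slot} then gives $\E_{\mu_t}\beta D^2\Tr\ell(B_t)[Y,Y]\le C\beta d\eta$, which is the first term of \eqref{eq:expected-interpolation-hessian}.

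\textbf{Tail part.} We use the upper bound \eqref{eq:tail-divided-difference-upper},
\[
r_q'^{[1]}(x,y)\le 2q\max(|x|,|y|)^{2q-2}\,\ell'^{[1]}(x,y)\le 2q\bigl(x^{2q-2}+y^{2q-2}\bigr)\ell'^{[1]}(x,y).
\]
Inserting this into Lemma~\ref{lem:spectral-hessian}, and using $\ell'^{[1]}(x,y)=\nlsum_\rho\frac{1}{(1+\rho x)(1+\rho y)}$, we obtain in the eigenbasis
\[
D^2\Tr r_q(B_t)[Y,Y]\le 2q\nlsum_\rho\nlsum_{a,b}\frac{(\lambda_a^{2q-2}+\lambda_b^{2q-2})|\widetilde Y_{ab}|^2}{(1+\rho\lambda_a)(1+\rho\lambda_b)}=4q\nlsum_\rho\Tr\bigl(H_\rho^{-1}B_t^{q-1}YH_\rho^{-1}YB_t^{q-1}\bigr),
\]
where the last step uses symmetry in $a,b$. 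The right-hand side is a mixed form between $B_t^{q-1}Y$ and $YB_t^{q-1}$, so we apply Cauchy--Schwarz in the positive form $(X,X')\mapsto\Tr(H_\rho^{-1}X^\top H_\rho^{-1}X')$ to bound it by $\mathcal Q_{\rho\rho}(B_t^{q-1}Y)$, the transpose giving the same value. Taking $\E_{\mu_t}$ and invoking Lemma~\ref{lem:polynomial-two-slot} yields $C\alpha q\,d(Cq^3\eta)^q$, the second term.

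The main obstacle is making this tail reduction rigorous: we must check that the eigenvalue weighting $\lambda_a^{2q-2}+\lambda_b^{2q-2}$ really produces the two-slot weight $\mathcal K_{\rho\rho}$ applied to $\vecop(B_t^{q-1}Y)$, with the powers attached on the correct sides of the resolvents. If the identification with a single $\mathcal Q_{\rho\rho}$ fails, the fallback is to split $\lambda_a^{2q-2}=\lambda_a^{q-1}\cdot\lambda_a^{q-1}$ and use the arithmetic-geometric inequality to reach a sum of $\mathcal Q_{\rho\rho}(B_t^{q-1}Y)$ and $\mathcal Q_{\rho\rho}(YB_t^{q-1})$. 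The second of these is handled by the same Lemma~\ref{lem:polynomial-two-slot} applied with the roles of the slots swapped, i.e. with $\mathcal Q_{\sigma\tau}$ for $\sigma\neq\tau$ or the transposed polynomial.

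Since all the constants above are independent of $t$, the bound \eqref{eq:expected-interpolation-hessian} holds uniformly on $[0,1]$.
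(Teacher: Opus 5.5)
Your proposal is correct and follows the paper's proof essentially step by step: the split $\mathcal V=\beta\Tr\ell+(\alpha-\beta)\Tr r_q$, the identification of the log-barrier Hessian with $\nlsum_\rho\mathcal Q_{\rho\rho}^{(t)}(Y)$, the bound $\max\{|\lambda_a|,|\lambda_b|\}^{2q-2}\le\lambda_a^{2q-2}+\lambda_b^{2q-2}$ with $a\leftrightarrow b$ symmetry to reach $\mathcal Q_{\rho\rho}^{(t)}(B_t^{q-1}Y)$, and Lemma~\ref{lem:polynomial-two-slot} applied uniformly in $t$ all match the paper, and your worry about the tail identification is unfounded, because $\Tr(H_\rho^{-1}B_t^{q-1}YH_\rho^{-1}YB_t^{q-1})=\Tr(H_\rho^{-1}B_t^{2q-2}YH_\rho^{-1}Y)$ is exactly $\vecop(B_t^{q-1}Y)^\top\mathcal K_{\rho\rho}\vecop(B_t^{q-1}Y)$, so neither Cauchy--Schwarz nor the fallback is needed. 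One small point: bounding $\nlsum_iM_i(t)^2$ by $C\eta$ and absorbing $C$ into $c_{\mathrm P}$ would strictly require the stronger hypothesis $q^2\eta\le c_{\mathrm P}/C$, whereas the paper gets $\nlsum_iM_i(t)^2\preceq\eta I_d$ exactly via Kadison's inequality for the unital completely positive map $\Phi_t$ (equivalently, operator convexity of $X\mapsto X^2$ applied to $M_i(t)=(1-t)M_{i,\mathrm d}+tM_i$), so the hypothesis as stated suffices.
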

\begin{proof}
Since $\mathcal V
=\beta\Tr\ell+(\alpha-\beta)\Tr r_q$ we estimate separately the contributions of the  logarithmic term $\ell(x)=-\log(1-x^2)$ and of the tail term $r_q$. Let $H_\rho=I_d+\rho B_t.$ Let $B_t = U\diag(\lambda_1,\dots,\lambda_d) U^\top$ and $\widetilde{Y} = U^\top YU$. Using \eqref{eq:log-barrier-divided-difference} we obtain,

\[
\begin{aligned}
D^2\Tr\ell(B_t)[Y,Y]
&=
\nlsum_{a,b=1}^d
\ell'^{[1]}(\lambda_a,\lambda_b)
|\widetilde Y_{ab}|^2\\
&=
\nlsum_{\rho=\pm1}
\nlsum_{a,b=1}^d
\frac{
|\widetilde Y_{ab}|^2
}{
(1+\rho\lambda_a)(1+\rho\lambda_b)
}\\
&=
\nlsum_{\rho=\pm1}
\vecop(Y)^\top
(
H_\rho^{-1}\otimes H_\rho^{-1}
)
\vecop(Y),
\end{aligned}
\]
and therefore 
\begin{equation}
\label{eq:hessianlog}
    \E_{\mu_t}D^2\Tr\ell(B_t)[Y,Y]
=
\nlsum_{\rho=\pm1}
\mathcal Q_{\rho\rho}^{(t)}(Y).
\end{equation}

Moreover, using Lemma \ref{lem:tail-divided-difference}, we obtain for Hessian of $r_q$,
\begin{align}
D^2\Tr r_q(B_t)[Y,Y]
&=
\nlsum_{a,b=1}^d
r_q'^{[1]}(\lambda_a,\lambda_b)
|\widetilde Y_{ab}|^2
\nonumber\\
&\le
Cq\nlsum_{\rho=\pm1}\nlsum_{a,b=1}^d
\frac{
\max\{|\lambda_a|,|\lambda_b|\}^{2q-2}
}{
(1+\rho\lambda_a)(1+\rho\lambda_b)
}
|\widetilde Y_{ab}|^2.
\label{eq:tail-hessian-spectral-bound}
\end{align}
Using $\max\{|\lambda_a|,|\lambda_b|\}^{2q-2}
\le
|\lambda_a|^{2q-2}+|\lambda_b|^{2q-2}$ and $|\widetilde Y_{ab}|^2
=|\widetilde Y_{ba}|^2$ we obtain,
\begin{align}
D^2\Tr r_q(B_t)[Y,Y]
&\le
Cq\nlsum_{\rho=\pm1}\nlsum_{a,b=1}^d
\frac{
\lambda_a^{2q-2}
}{
(1+\rho\lambda_a)(1+\rho\lambda_b)
}
|\widetilde Y_{ab}|^2
\nonumber\\
&=
Cq\nlsum_{\rho=\pm1}
\Tr(
H_\rho^{-1}B_t^{2q-2}YH_\rho^{-1}Y
),
\label{eq:tail-Hessian-upper}
\end{align}
and therefore, 
\begin{equation}
\label{eq:hessiantail}
    \E_{\mu_t}D^2\Tr r_q(B_t)[Y,Y]
\le
Cq\nlsum_{\rho=\pm1}
\mathcal Q^{(t)}_{\rho\rho}(B_t^{q-1}Y).
\end{equation}

For $t\in[0,1]$, let $\Phi_t$ denote the map  which fixes
the diagonal entries of a matrix and multiplies its off-diagonal
entries by $t$, so that $M_i(t)=\Phi_t(M_i)$.
The map $\Phi_t$ is unital and completely positive, and hence
Kadison's inequality gives $M_i(t)^2
= \Phi_t(M_i)^2
\preceq
\Phi_t(M_i^2)$, and hence

\begin{equation}
\nlsum_{i=1}^n M_i(t)^2
\preceq
\Phi_t\Bigl(\nlsum_{i=1}^n M_i^2\Bigr)
\preceq
\eta I_d.
\label{eq:path-variance}
\end{equation}

Moreover, since $M_{i,\mathrm d}=\diag(M_i)$ and $M_i$ is symmetric,
we have $M_{i,\mathrm d}^2
\preceq
\diag(M_i^2).$
Indeed, for every $1\le j\le d$, $(M_i)_{jj}^2
\le
\nlsum_{k=1}^d(M_i)_{jk}^2
=
(M_i^2)_{jj}.$
Consequently,
\begin{equation}
\nlsum_{i=1}^nM_{i,\mathrm d}^2
\preceq
\diag\Bigl(\nlsum_{i=1}^nM_i^2\Bigr)
\preceq
\eta I_d.
\label{eq:diagonal-variance}
\end{equation}
Using the matrix inequality 
\[
(X-Y)^2\preceq2X^2+2Y^2 \qquad \text{for} \; X,Y\in \Sym_d(\R),
\]
we obtain,  
\begin{equation}
\nlsum_{i=1}^nM_{i,\mathrm o}^2
\preceq
2\nlsum_{i=1}^nM_i^2
+
2\nlsum_{i=1}^nM_{i,\mathrm d}^2
\preceq
4\eta I_d.
\label{eq:offdiag-variance}
\end{equation}

Both variance bounds in \eqref{eq:path-variance} and \eqref{eq:offdiag-variance}  hold uniformly in
$t\in[0,1]$, and therefore we obtain using Lemma \ref{lem:polynomial-two-slot} that

\begin{equation}
\label{eq:boundcurv}
   \mathcal Q^{(t)}_{\rho\rho}(Y)
\le
Cd\eta,
\end{equation}
and
\begin{equation}
  \label{eq:boundcurv2}
   \qquad \mathcal Q^{(t)}_{\rho\rho}(B_t^{q-1}Y)
\le
d(Cq^3\eta)^q. 
\end{equation}
Finally, substituting  \eqref{eq:boundcurv} and \eqref{eq:boundcurv2} into \eqref{eq:hessianlog} and \eqref{eq:hessiantail}, respectively, concludes the proof.
\end{proof}

\subsection{Proof of Theorem \ref{thm:mesoscopic-smallball}}

Let $c_*>0$ be some constant. Let  $1\le n\le d$ and $A_1,\dots,A_n$ symmetric $d \times d$ matrices  with $\opnorm{A_i}\le1$ and let $L=1+\log\frac{2d}{n},
\,
R=\kappa\sqrt n\,L^2$.
Rescale the matrices $A_i$ by setting $M_i=\frac{A_i}{R}$. Let $B(g)=\nlsum_{i=1}^n g_iM_i$.  Since $\norm{A_i}_{\text{op}}\le1$, we have
$\nlsum_{i=1}^nM_i^2
\preceq
\eta I_d$ and $\eta:=\frac1{\kappa^2L^4}.$ Moreover,   denote with $A_*,D_*$ and $c_{\mathrm{curv}}$ the universal constants appearing in Proposition \ref{prop:two-slot-curvature}, and let $c_P$ be the absolute constant as defined  in Proposition \ref{thm:defective-two-slot}.
Likewise, let $C_{\mathrm I}\ge2$ be the universal constant that appears in \eqref{eq:expected-interpolation-hessian} in
Lemma~\ref{lem:expected-interpolation-hessian}.

 Let $\delta=\frac14\min\{c_*,1\}.$
We choose the constants in the following order.  First fix a universal
constant $D\ge\max\{D_*,1\}.$
Next choose $A=A(\delta)$ sufficiently large so that
\begin{equation}
A\ge A_*,
\qquad
A\ge5,
\qquad
A^2\ge2D,
\qquad
\frac{C_{\mathrm I}D}{A}\le\delta.
\label{eq:A-choice}
\end{equation}
We then choose $c_q=c_q(\delta)\ge2$ sufficiently large that
\begin{equation}
2Ae^2\,4^{-c_q}\le\delta.
\label{eq:cq-choice}
\end{equation}
Let $r=\frac dn$ and $L=1+\log(2r)$, and  set
\begin{equation}
q=\lceil c_qL\rceil,
\qquad
\alpha=A\max\{1,r\eta\},
\qquad
\beta=\frac D\alpha.
\label{eq:final-parameter-choice}
\end{equation}

We now verify that our choice of parameters satisfies,
\begin{equation}
\alpha\ge\beta, \qquad
\alpha\ge A_*,
\qquad
\alpha\beta\ge D_*,
\qquad
q^2\eta\le c_{\mathrm{P}}.
\label{eq:curvature-parameter-conditions-final}
\end{equation}

By construction, $\alpha\ge A\ge A_*$ and $\alpha\beta=D\ge D_*.$
Moreover, since $\alpha\ge A$ and $A^2\ge2D$, we have $\alpha^2\ge2D=2\alpha\beta,$
so that $\alpha\ge2\beta.$

Since $c_qL\ge2$, we have $q=\lceil c_qL\rceil\le2c_qL.$
Consequently,
\begin{equation}
q^2\eta
\le
\frac{4c_q^2L^2}{\kappa^2L^4}
\le
\frac{4c_q^2}{\kappa^2}.
\label{eq:final-q2eta}
\end{equation}
Thus, after increasing $\kappa$ if necessary, we may  assume $q^2\eta\le c_{\mathrm{P}}.$

Hence our choice of  parameters satisfies \eqref{eq:curvature-parameter-conditions-final}.
We next would like to estimate $C_{\mathrm I}\beta d\eta
+
C_{\mathrm I}\alpha qd
\bigl(C_{\mathrm I}q^3\eta\bigr)^q$ in terms of the parameters we have specified.
To this end, using,  $d=rn$ we obtain,
\begin{equation}
    C_{\mathrm I}\beta d\eta = C_{\mathrm I}
\frac{D}{A\max\{1,r\eta\}}
rn\eta \le\frac{C_{\mathrm I}D}{A}\,n
\le
\delta n. \label{eq:final-floor-bound}
\end{equation}
Moreover,  after increasing $\kappa$ so that
$\eta\le1$, we obtain
\begin{equation}
\alpha r
=
Ar\max\{1,r\eta\}
\le
A\max\{r,r^2\eta\}
\le
Ar^2
\le
Ae^{2L}.
\label{eq:final-alpha-r}
\end{equation}
Now, set $\theta=\frac{8C_{\mathrm I}c_q^3}{\kappa^2}$. For $\kappa$ sufficiently large we have $\theta\le1$.  Since
 $C_{\mathrm I}q^3\eta
\le
\frac{8C_{\mathrm I}c_q^3}{\kappa^2L}$ and $q\ge c_qL$, using 
\eqref{eq:final-alpha-r} we obtain,
\begin{align}
\frac1n
C_{\mathrm I}\alpha qd
\bigl(C_{\mathrm I}q^3\eta\bigr)^q
&=
C_{\mathrm I}\alpha qr
\bigl(C_{\mathrm I}q^3\eta\bigr)^q
\nonumber\\
&\le
2C_{\mathrm I}Ac_q
L e^{2L}
\Bigl(\frac{\theta}{L}\Bigr)^q
\nonumber\\
&\le
2C_{\mathrm I}Ac_q
L e^{2L}\theta^{c_qL}.
\label{eq:final-tail-intermediate}
\end{align}
The right-hand side of \eqref{eq:final-tail-intermediate} tends to zero uniformly for $L\ge1$ as
$\kappa\to\infty$.  Indeed, if $x=e^2\theta^{c_q}$
is chosen so that $x\le e^{-1}$, then
$Lx^L\le x$ for every $L\ge1$.  We may therefore increase $\kappa$
until
\[
2C_{\mathrm I}Ac_q x\le\delta,
\]
which then finally gives
\begin{equation}
C_{\mathrm I}\alpha qd
\bigl(C_{\mathrm I}q^3\eta\bigr)^q
\le
\delta n.
\label{eq:final-tail-bound}
\end{equation}

Let $F(t)=\log Z_t$ be the logarithm of the partition function along
the interpolation as defined in  Section~\ref{sec:interpolation}. Now, by
Lemma~\ref{lem:expected-interpolation-hessian},
\eqref{eq:final-floor-bound}, and
\eqref{eq:final-tail-bound}, we obtain uniformly in $t\in[0,1]$
\begin{equation}
F''(t)\ge-2\delta n.
\label{eq:final-F-second-bound}
\end{equation}

In the remainder, our aim will be to lower-bound $F(0)$.  To this end, we write,
\[
B_0(g)
=
\diag(\zeta_1(g),\dots,\zeta_d(g)),
\]
where $\zeta=(\zeta_1,\dots,\zeta_d)\sim\mathcal N(0,\Sigma)$.
Using that $\nlsum_{i=1}^nM_i^2
\preceq
\eta I_d$, we obtain $\Var(\zeta_j)
=
\nlsum_{i=1}^n(M_i)_{jj}^2
\le
\bigl(\nlsum_{i=1}^nM_i^2\bigr)_{jj}
\le
\eta$, and consequently $\Tr\Sigma\le d\eta.$

For $|z|\le1/2$ we have $\ell(z)
=
-\log(1-z^2)
\le
2z^2,$
and $r_q(z)
=
\nlsum_{k=q}^\infty\frac{z^{2k}}k
\le
\frac{2}{q}4^{-q}.$
Since $\alpha\ge\beta$, it follows that
\begin{equation}
v_{q,\alpha,\beta}(z)
\le
2\beta z^2
+
\frac{2\alpha}{q}4^{-q},
\qquad |z|\le\frac12.
\label{eq:diagonal-potential-bound}
\end{equation}

Let $E=\Bigl\{
|\zeta_j|\le\frac12
\text{ for every }1\le j\le d
\Bigr\}.$
Using \eqref{eq:diagonal-potential-bound}, we obtain
\begin{align}
Z_0
&=
\E\exp\Bigl(
-\nlsum_{j=1}^d
v_{q,\alpha,\beta}(\zeta_j)
\Bigr)
\nonumber\\
&\ge
\exp\Bigl(
-\frac{2\alpha d}{q}4^{-q}
\Bigr)
\E\bigl[
e^{-2\beta\norm{\zeta}_2^2}\mathbf 1_E
\bigr].
\label{eq:diagonal-Z0-first}
\end{align}

Moreover, using the standard formula for the Laplace transform of a Gaussian quadratic form gives,
\begin{align}
\E e^{-2\beta\norm{\zeta}_2^2}
&=
\det(I_d+4\beta\Sigma)^{-1/2}
\nonumber\\
&\ge
\exp(-2\beta\Tr\Sigma)
\ge
\exp(-2\beta d\eta).
\label{eq:diagonal-quadratic-tilt}
\end{align}
Define the tilted probability measure $\widetilde{\mu}$ on $\R^d$ by 
\[
\frac{d\widetilde{\mu}}{d\mu_\zeta}(z)
=
\frac{e^{-2\beta\norm{z}_2^2}}
{\E e^{-2\beta\norm{\zeta}_2^2}},
\]
where $\mu_\zeta$ denotes the law of the centered Gaussian vector
$\zeta$ with covariance matrix $\Sigma$. Then $\widetilde{\mu}$ is again a centered Gaussian measure, with
covariance matrix $\widetilde{\Sigma}
=
\Sigma^{1/2}
(I_d+4\beta\Sigma)^{-1}
\Sigma^{1/2}.$
Since $(I_d+4\beta\Sigma)^{-1}\preceq I_d,$
we have $\widetilde{\Sigma}\preceq\Sigma.$
In particular,  under $\widetilde{\mu}$ each coordinate has variance 
at most $\eta$.  Applying the Gaussian correlation inequality \cite{Royen2014GaussianCorrelation,Sidak1967Rectangular} under the tilted measure
$\widetilde{\mu}$ to the symmetric strips
$\{|\zeta_j|\le1/2\}$ gives
\[
\widetilde{\mu}(E)
\ge
\nlprod_{j=1}^d
\widetilde{\mu}\{|\zeta_j|\le1/2\}
\ge
(1-2e^{-1/(8\eta)})^d.
\]
After increasing $\kappa$ once more, we may assume
$2e^{-1/(8\eta)}\le1/2$.  Since
$\log(1-x)\ge-2x$ for $0\le x\le1/2$, it follows that
\begin{equation}
\widetilde{\mu}(E)
\ge
\exp(
-4d e^{-1/(8\eta)}
).
\label{eq:diagonal-strip-exponential}
\end{equation}
Therefore, combining
\eqref{eq:diagonal-Z0-first},
\eqref{eq:diagonal-quadratic-tilt}, and
\eqref{eq:diagonal-strip-exponential}, we arrive at
\begin{equation}
F(0)
\ge
-2\beta d\eta
-
\frac{2\alpha d}{q}4^{-q}
-
4d e^{-1/(8\eta)}.
\label{eq:final-F0-three-errors}
\end{equation}

We now check that each of the three terms on the right-hand side of \eqref{eq:final-F0-three-errors} is at
most $\delta n$. To this end, note that $2\beta d\eta
\le
\frac{2D}{A}n
\le
\delta n,$
Moreover, using \eqref{eq:final-alpha-r},
$q\ge c_qL$, and $q\ge1$,
\begin{align}
\frac1n
\frac{2\alpha d}{q}4^{-q}
&=
\frac{2\alpha r}{q}4^{-q}
\nonumber\\
&\le
2Ae^{2L}4^{-c_qL}
\nonumber\\
&=
2A(e^2 4^{-c_q})^L
\le
2Ae^2 4^{-c_q}
\le
\delta
\label{eq:final-diagonal-tail}
\end{align}
by the choice of $c_q$ in \eqref{eq:cq-choice}.

Finally, for $\kappa\ge4$ and $L\ge1$, using $L-\frac{\kappa^2L^4}{8}
\le
-\frac{\kappa^2L^4}{16}
\le
-\frac{\kappa^2}{16}$, we obtain
\begin{equation}
\frac{4d}{n}e^{-1/(8\eta)}
=
4r
\exp\Bigl(
-\frac{\kappa^2L^4}{8}
\Bigr)
\le
4\exp\Bigl(
L-\frac{\kappa^2L^4}{8}
\Bigr)\le 4\exp\Bigl(-\frac{\kappa^2}{16}
\Bigr)
\label{eq:final-strip-pre}
\end{equation}

Thus, by increasing $\kappa$ such that $4e^{-\kappa^2/16}\le\delta,$
we obtain $4d e^{-1/(8\eta)}
\le
\delta n.$
Therefore, we have by 
\eqref{eq:final-F0-three-errors} that
\begin{equation}
F(0)\ge-3\delta n.
\label{eq:final-F0}
\end{equation}

Now, note that we have  $F'(0)=0.$
Indeed, $F'(t)
=
-\E_{\mu_t}
D\mathcal V(B_t)[Y].$
At $t=0$, the matrix $B_0$ is diagonal, and therefore
$v'_{q,\alpha,\beta}(B_0)$ is diagonal, whereas $Y=B_{\mathrm o}$
has zero diagonal.  Hence $D\mathcal V(B_0)[Y]
=
\Tr(
v'_{q,\alpha,\beta}(B_0)Y
)
=
0.$

Finally, using Taylor's formula together with
\eqref{eq:final-F-second-bound} and
\eqref{eq:final-F0}, now gives
\begin{align}
F(1)
&=
F(0)+F'(0)
+
\int_0^1(1-t)F''(t)\,dt
\nonumber\\
&\ge
-3\delta n
-
2\delta n\int_0^1(1-t)\,dt
\nonumber\\
&=
-4\delta n
\ge
-c_*n,
\label{eq:final-F1}
\end{align}
where the last inequality follows from our choice of $\delta=\frac14\min\{c_*,1\}$.
By definition of the potential, we have 
$e^{-\mathcal V(B)}=0$ whenever $\opnorm{B}\ge1$, while
$0\le e^{-\mathcal V(B)}\le1$ on $\{\opnorm{B}<1\}$.  Consequently,
\[
\Prob\{\opnorm{B}<1\}
\ge
\E e^{-\mathcal V(B)}
=
Z_1
=
e^{F(1)}
\ge
e^{-c_*n}.
\]
But, since $B=R^{-1}\nlsum_i g_iA_i$ and
$R=\kappa\sqrt n\,L^2$, we therefore conclude that
\[
\Prob\biggl\{
\opnorm[\Big]{\nlsum_{i=1}^ng_iA_i}
<
\kappa\sqrt n
\Bigl(1+\log\frac{2d}{n}\Bigr)^2
\biggr\}
\ge
e^{-c_*n},
\]
which proves
Theorem~\ref{thm:mesoscopic-smallball}.

Finally, note all lower bounds imposed on $\kappa$ depend only on
$\delta$, $A$, $c_q$, and other universal constants.  Since
$\delta$, $A$, and $c_q$ depend only on $c_*$, the resulting
$\kappa=\kappa(c_*)$ is independent of $n$, $d$, and the matrices
$A_1,\dots,A_n$.

We also record where the matrix dimension $d$ entered the argument: only through the trace of the matrix variance, $\Tr\nlsum_iM_i^2\le d\eta$, in \eqref{eq:final-floor-bound} and \eqref{eq:final-F0-three-errors}, and through the dimensional factor in the Gaussian trace-moment bound of Lemma~\ref{lem:gaussian-trace-moment}, in \eqref{eq:final-tail-bound}. Both were absorbed by the choice of parameters, at the price of precisely the polylogarithmic factor $L^2=(1+\log\frac{2d}{n})^2$ in the radius $R$.

\subsection{Proof of Matrix Spencer}

\label{sec:matrix-spencer-completion}

We now combine Theorem~\ref{thm:mesoscopic-smallball} with Gluskin's method for partial coloring. We use the shifted version due to Reis and Rothvo{\ss} \cite[Theorem~6]{ReisRothvoss2023}, in the form stated in \cite[Theorem~3.1]{dadush2022matrix}.
\begin{theorem}[Algorithmic Gaussian partial coloring]
\label{thm:algorithmic-partial-coloring}
For every fixed $\theta>0$ there exists a constant
$C_{\mathrm{pc}}=C_{\mathrm{pc}}(\theta)>0$ with the following
property. Let $K\subseteq\R^m$ be a symmetric convex body satisfying
\[
\gamma_m(K)\ge 2^{-\theta m}.
\]
Then, given a current fractional coloring
$y\in(-1,1)^m$, there is a randomized polynomial-time algorithm
which finds an increment $x\in C_{\mathrm{pc}}K$ such that
\[
y+x\in[-1,1]^m
\]
and
\[
\bigl|
\{
i\in[m]:
|y_i+x_i|=1
\}
\bigr|
\ge \frac m2.
\]
\end{theorem}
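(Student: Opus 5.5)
We do not reprove this statement from scratch; it is \cite[Theorem~6]{ReisRothvoss2023}, restated as in \cite[Theorem~3.1]{dadush2022matrix}. If one wants a self-contained argument, we would follow Rothvo{\ss}'s projection algorithm in the shifted form of Reis and Rothvo{\ss}. Set $P_y=\{x\in\R^m:\ y+x\in[-1,1]^m\}$; this is a box containing $0$ with every side of length $2$. For a constant $C=C_{\mathrm{pc}}(\theta)$ to be fixed, let $Q=CK\cap P_y$. The algorithm samples $g\sim\mathcal N(0,I_m)$ and returns the Euclidean projection $x=\arg\min_{z\in Q}\norm{z-g}_2$. This is a convex quadratic program over a body with a membership oracle, so $x$ is computable in polynomial time up to negligible error.

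\textbf{Step 1 (a measure lower bound for $Q$).} We want
\[
\gamma_m\bigl(CK\cap P_y\bigr)\ge e^{-\varepsilon m}
\]
with $\varepsilon$ as small as needed once $C$ is large. For the symmetric part, Gaussian scaling of a symmetric convex body gives $\gamma_m(CK)\ge 1-e^{-\Omega(m)}$ for large $C=C(\theta)$, starting from $\gamma_m(K)\ge2^{-\theta m}$. For the box, we would pass to a symmetric sub-box. Each coordinate interval $[-1-y_i,1-y_i]$ contains $[-\min(1\pm y_i),\min(1\pm y_i)]$. When $|y_i|$ is close to $1$ this is short, so, following Reis and Rothvo{\ss}, we shift the Gaussian center and pay a factor $e^{-O(\norm{\text{shift}}^2)}$. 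Alternatively we count only half the coordinates: coordinates with large $|y_i|$ need only move a distance $O(1)$ to become tight. In either version, the Gaussian correlation inequality \cite{Royen2014GaussianCorrelation} applied to $CK$ and the slabs gives a lower bound of the form $e^{-c m}$ with $c$ controllable.

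\textbf{Step 2 (few tight constraints is unlikely).} Let $S=\{i:\ |y_i+x_i|=1\}$ and suppose $|S|<m/2$. The optimality condition says that $g-x$ lies in the normal cone of $Q$ at $x$. Since fewer than $m/2$ box facets are active, this normal cone is generated by the normal cone of $CK$ together with fewer than $m/2$ coordinate directions. We then compare $\norm{g-x}$ with the distance from $g$ to the larger body $CK\cap\{z:\ |y_i+z_i|\le1,\ i\in T\}$, for suitable index sets $T$ of size $m/2$.

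The mechanism is Rothvo{\ss}'s. First, since $\gamma_m(Q)\ge e^{-\varepsilon m}$, with high probability $g$ lies within distance $O(\sqrt{\varepsilon}\,m^{1/2})$ of $Q$ (isoperimetry, or Gaussian concentration of the distance function). Second, a projection with fewer than $m/2$ active coordinate constraints can be that close only if $g$ is close to the thinner body $CK\cap\{\text{half of the slabs}\}$. A union bound over the at most $2^m$ choices of $T$ then shows this has probability at most $2^m e^{-\Omega(m)}$, which is small. Repeating $O(1)$ times gives $|S|\ge m/2$ with high probability. Membership $x\in CK$ and $y+x\in[-1,1]^m$ hold by construction.

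\textbf{Main obstacle.} The hard step is the quantitative geometric comparison in Step~2: showing that a point whose projection onto $Q$ activates few coordinate facets must be unusually far from $Q$. The Gaussian mass must beat the $2^m$ union bound, and this must be done uniformly in the shift $y$, since the shifted box is not symmetric. The first thing to try is Rothvo{\ss}'s argument as is: compare with the symmetric body $CK\cap\bigcap_{i\in T}\{|z_i|\le\delta\}$. By the correlation inequality its measure is at least $e^{-\varepsilon m}\cdot\delta^{O(m)}$, which is still $2^{-o(m)}$-competitive after choosing $C$ and $\delta$. For the shift, we would rely on Reis and Rothvo{\ss}'s observation that each shifted slab contains a symmetric slab of width $2$ after recentering.
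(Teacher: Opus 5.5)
Your primary argument, invoking \cite[Theorem~6]{ReisRothvoss2023} in the form of \cite[Theorem~3.1]{dadush2022matrix}, is exactly what the paper does. The paper gives no proof of this statement, so as a citation your proposal is fine. The optional self-contained sketch, however, is incorrect as written and should not be offered as a proof.

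\textbf{The algorithm cannot reach $m/2$.} Take $K=[-R,R]^m$ with $R\ge1$ large enough that $\gamma_m(K)\ge2^{-\theta m}$, take $y=0$, and take $C\ge1$. Then $Q=[-1,1]^m$ and the projection of $g$ is coordinatewise clipping. The tight set is $\{i:|g_i|\ge1\}$, which has size about $0.32m$ with overwhelming probability. Independent reruns therefore do not help. You would have to iterate on the still-active coordinates, using $\gamma_E(K\cap E)\ge\gamma_m(K)$ for coordinate sections $E$, or otherwise change the procedure.

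\textbf{Step~1's target is unattainable.} You aim for $\gamma_m(CK\cap P_y)\ge e^{-\varepsilon m}$ with $\varepsilon$ arbitrarily small, but this fails for every $K$ and $C$. Each side of $P_y$ is an interval of length $2$, so $\gamma_m(P_y)\le\gamma_1([-1,1])^m<e^{-0.38m}$. For the same reason $d(g,P_y)\ge c\sqrt m$ with high probability, so the claim in Step~2 that $g$ lies within $O(\sqrt{\varepsilon m})$ of $Q$ is false.

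In Rothvo{\ss}'s argument this lower bound on $d(g,P_y)$ is the lever. It is played against an upper bound on $d(g,CK\cap P_I)$ that holds simultaneously for all small index sets $I$, namely the candidate tight sets. That bound uses only the measure of $CK$ and of $|I|$ strips, and the union bound runs over $\binom{m}{\le\delta' m}$ sets rather than $2^m$. This yields only a small constant fraction of tight coordinates, which is why the first issue above arises.

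\textbf{The dilation claim is false.} It is not true that $\gamma_m(K)\ge2^{-\theta m}$ implies $\gamma_m(CK)\ge1-e^{-\Omega(m)}$, or even $\gamma_m(CK)\ge e^{-\varepsilon m}$, for a constant $C=C(\theta)$. For $K=\{x:|x_1|\le\epsilon,\ \norm{x}_\infty\le m\}$ with $\epsilon$ of order $2^{-\theta m}$, one has $\gamma_m(CK)\le C\epsilon$. So a general $\theta$ cannot be handled by scaling $K$ alone; an additional ingredient is needed, and that is part of what the cited theorem supplies.

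Finally, $e^{-\varepsilon m}\delta^{O(m)}$ is not $2^{-o(m)}$ when $\delta$ is a constant.
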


\begin{proof}[Proof of Theorem~\ref{thm:matrix-spencer}]
Let $A_1,\dots,A_n\in\Sym_n(\R),$ satisfying $\opnorm{A_i}\le1.$
Let $c_*=\log 2$ and let $\kappa_0:=\kappa(\log2).$
We construct the coloring iteratively.  Let $y^{(0)}=0\in[-1,1]^n.$
At the beginning of the $k$-th round, let $S_k
=
\{
i\in[n]:
|y_i^{(k)}|<1
\}$
be the set of active coordinates, and let $m_k=|S_k|.$
If $m_k=0$, then our iteration terminates and we have fully colored all coordinates. 
For an active set $S\subseteq[n]$ of cardinality $m$, define
\begin{equation}
K_S
=
\Bigl\{
x\in\R^S:
\opnorm[\Big]{\nlsum_{i\in S}x_iA_i}
\le
R(m)
\Bigr\},
\label{eq:active-discrepancy-body}
\end{equation}
where
\begin{equation}
R(m)
=
\kappa_0\sqrt m
\Bigl(
1+\log\frac{2n}{m}
\Bigr)^2.
\label{eq:active-radius}
\end{equation}
The set $K_S$ is a symmetric convex  subset of $\R^S$. Applying Theorem~\ref{thm:mesoscopic-smallball} to the family
$(A_i)_{i\in S}$, with matrix dimension $d=n$ and
number of coefficient matrices being $m=|S|$, gives $\gamma_m(K_S)
\ge
e^{-(\log2)m}
=
2^{-m}.$
Thus Theorem~\ref{thm:algorithmic-partial-coloring} applies with
$\theta=1$.  Let $C_{\mathrm{pc}}:=C_{\mathrm{pc}}(1)$.
Applying the partial-coloring algorithm to $K_{S_k}$ and the current
fractional coloring $y^{(k)}|_{S_k}\in(-1,1)^{S_k}$, produces an increment $x^{(k)}\in C_{\mathrm{pc}}K_{S_k}$
such that $y^{(k)}|_{S_k}+x^{(k)}
\in[-1,1]^{S_k}$
and at least half of the coordinates in $S_k$ become equal to
$\pm1$. For the next iteration, extend $x^{(k)}$ by zero outside $S_k$ and set $y^{(k+1)}=y^{(k)}+x^{(k)}$, and note that previously frozen coordinates are unchanged.  Moreover,
\begin{equation}
m_{k+1}\le\frac12m_k.
\label{eq:active-halving}
\end{equation}

Since $x^{(k)}\in C_{\mathrm{pc}}K_{S_k}$, the discrepancy increment
at the $k$-th round satisfies
\begin{equation*}
\opnorm[\Big]{
\nlsum_{i\in S_k}x_i^{(k)}A_i
}
\le
C_{\mathrm{pc}}\kappa_0
\sqrt{m_k}
\Bigl(
1+\log\frac{2n}{m_k}
\Bigr)^2.
\end{equation*}

After at most $\lceil\log_2n\rceil+1$ rounds there are no active
coordinates, so the final vector $\varepsilon:=y^{(T)}$
belongs to $\{\pm1\}^n$.  Since $y^{(0)}=0$, $\varepsilon
=
\nlsum_{k=0}^{T-1}x^{(k)},$
and hence
\begin{align}
\opnorm[\Big]{
\nlsum_{i=1}^n\varepsilon_iA_i
}
&\le
\nlsum_{k=0}^{T-1}
\opnorm[\Big]{
\nlsum_{i\in S_k}x_i^{(k)}A_i
}
\nonumber\\
&\le
C_{\mathrm{pc}}\kappa_0
\nlsum_{k=0}^{T-1}
\sqrt{m_k}
\Bigl(
1+\log\frac{2n}{m_k}
\Bigr)^2.
\label{eq:matrix-spencer-before-sum}
\end{align}

It remains to show that the radii in
\eqref{eq:matrix-spencer-before-sum} are summable. To this end, for each nonzero
$m_k$, define $j_k
=
\Bigl\lfloor
\log_2\frac{n}{m_k}
\Bigr\rfloor.$
By \eqref{eq:active-halving}, we have  $j_{k+1}\ge j_k+1$. Moreover, $2^{j_k}
\le
\frac{n}{m_k}
<
2^{j_k+1},$
so that $m_k\le 2^{-j_k}n$ and $1+\log\frac{2n}{m_k}< 1+(j_k+2)\log2\le 3(1+j_k).$
Therefore,
\begin{align*}
\sqrt{m_k}
\Bigl(
1+\log\frac{2n}{m_k}
\Bigr)^2
&\le
9\sqrt n\,
2^{-j_k/2}(1+j_k)^2.
\end{align*}
Since the integers $j_k$ are strictly increasing, we obtain
\begin{align}
\nlsum_{k=0}^{T-1}
\sqrt{m_k}
\Bigl(
1+\log\frac{2n}{m_k}
\Bigr)^2
&\le
9\sqrt n
\nlsum_{j=0}^{\infty}
2^{-j/2}(1+j)^2
\nonumber\\
&\le
C_{\mathrm{sum}}\sqrt n,
\label{eq:hereditary-radius-sum}
\end{align}
where $C_{\mathrm{sum}}
=
9\nlsum_{j=0}^{\infty}
2^{-j/2}(1+j)^2
<\infty$.
Finally, substituting \eqref{eq:hereditary-radius-sum} into
\eqref{eq:matrix-spencer-before-sum} gives
\[
\opnorm[\Big]{
\nlsum_{i=1}^n\varepsilon_iA_i
}
\le
C_{\mathrm{pc}}\kappa_0C_{\mathrm{sum}}\sqrt n.
\]
Thus, with $C
=
C_{\mathrm{pc}}\kappa_0C_{\mathrm{sum}},$
we obtain
\[
\opnorm[\Big]{
\nlsum_{i=1}^n\varepsilon_iA_i
}
\le
C\sqrt n,
\]
which  proves Theorem~\ref{thm:matrix-spencer}.
\end{proof}

\section*{Acknowledgments}
We would like to thank Afonso Bandeira for interesting discussions on Matrix Spencer and for highlighting matrix small-ball. SS also thanks a series of previous students and collaborators with whom he has previously discussed the Matrix Spencer conjecture.

\section*{Statement on AI Usage}
This work made extensive use of GPT-5.6 Sol (Pro) over a period of months for exploring approaches to the Matrix Spencer Conjecture. We trace back the starting point of the full resolution to a suggestion by the second named author (SS) to the AI assistant to follow the route of using log-barriers to soften the operator-norm based region in the Gluskin-style matrix small ball estimates (Gaussian measure of a suitable convex set), and the aim of using Schur complements to simplify the argumentation of Theorem \ref{thm:rsc}. Subsequently, GPT found a full proof after some failed attempts and a lot of back-n-forth with SS for the hereditary small-ball theorem---we attribute the heavy-lifting for that lemma and for almost all the calculations presented in this writeup to GPT. After the hereditary small-ball lemma we had all the pieces for the proof of the conjecture. At that point we took GPT's help for preparing the first rough draft of the work, which was then subsequently polished by EA and SS to bring it in the present form. We have also simplified the proofs, verified them, and cleaned up exposition. The responsibility for errors and omissions, of course, lies with the authors.

\bibliographystyle{plainnat}

\appendix
\section{Proofs of some technical statements from Section \ref{sec:matrixsmall}}
\subsection{Proof of Lemma \ref{lem:matrix-weighted-curvature}}
\label{app:curvature-criterion}

For an open set $\Omega'\subseteq\Omega$ we write
\[
\ip{F}{G}_{W,\Omega'}=\int_{\Omega'}F^\top WG\,dx,
\qquad
\mathcal E_{\Omega'}(F,G)=\int_{\Omega'}\nlsum_{i=1}^n(\partial_iF)^\top W(\partial_iG)\,dx,
\]
for the weighted inner product and the weighted Dirichlet form, we set
$\norm{F}_{W,\Omega'}^2=\ip{F}{F}_{W,\Omega'}$, and we let
\[
\mathcal L=-\nlsum_{i=1}^n\bigl(\partial_i^2+W^{-1}(\partial_iW)\partial_i\bigr),
\qquad\text{so that}\qquad
W\mathcal LG=-\nlsum_{i=1}^n\partial_i\bigl(W\partial_iG\bigr).
\]
The proof proceeds in three steps. We first establish a Bochner--Reilly
inequality on smooth bounded convex subdomains $\Omega'$ with
$\overline{\Omega'}\subset\Omega$ for fields satisfying a Neumann boundary
condition; we then deduce the Poincar\'e inequality on such subdomains;
finally we exhaust $\Omega$. Since $W$ is smooth and uniformly positive
definite on $\overline{\Omega'}$, no question concerning the degeneration
of $W$ near $\partial\Omega$ arises. In the scalar case this is the
classical route to Brascamp--Lieb-type inequalities on convex domains via
Reilly's formula \cite{Reilly1977}; see, e.g., \cite{KolesnikovMilman2017}.

\paragraph{Step 1: A Bochner--Reilly inequality.}
Let $\Omega'$ be a bounded convex domain with $C^\infty$ boundary such
that $\overline{\Omega'}\subset\Omega$, let $\nu$ denote its outward unit
normal, and let $G\in C^\infty(\overline{\Omega'};\R^r)$ satisfy the
Neumann condition $\partial_\nu G=0$ on $\partial\Omega'$. We claim that
\begin{align}
\norm{\mathcal LG}_{W,\Omega'}^2
={}&
\nlsum_{i,j=1}^n\norm{\partial_{ij}G}_{W,\Omega'}^2
+
\int_{\Omega'}\nlsum_{i,j=1}^n(\partial_iG)^\top\Theta_{ij}(W)(\partial_jG)\,dx
\nonumber\\
&+
\int_{\partial\Omega'}\nlsum_{a,b=1}^{n-1}S_{ab}\,(\partial_{\tau_a}G)^\top W(\partial_{\tau_b}G)\,dS,
\label{eq:matrix-bochner}
\end{align}
where, at each point of $\partial\Omega'$, $\tau_1,\dots,\tau_{n-1}$ is
an orthonormal basis of the tangent space and
$S_{ab}=\ip{D_{\tau_a}\nu}{\tau_b}$ is the second fundamental form of
$\partial\Omega'$. Since $\Omega'$ is convex, $(S_{ab})_{a,b}\succeq0$, and
since $W\succ0$, the matrix $M_{ab}=(\partial_{\tau_a}G)^\top W(\partial_{\tau_b}G)$
is a Gram matrix; hence the boundary integrand $\Tr(SM)$ is nonnegative.
Together with the curvature assumption \eqref{eq:abstract-curvature-gap},
\eqref{eq:matrix-bochner} therefore yields
\begin{equation}
\norm{\mathcal LG}_{W,\Omega'}^2
\ge
\lambda\,\mathcal E_{\Omega'}(G,G)
=
\lambda\ip{G}{\mathcal LG}_{W,\Omega'},
\label{eq:bochner-spectral-gap}
\end{equation}
where the last identity follows from \eqref{eq:neumann-ibp} below (with
$X=\partial_iG$, $Y=G$) and $\partial_\nu G=0$.

To prove \eqref{eq:matrix-bochner}, write $\partial_i^*=-\partial_i-W^{-1}(\partial_iW)$,
so that $\mathcal L=\nlsum_i\partial_i^*\partial_i$. Integration by parts
gives, for all $X,Y\in C^1(\overline{\Omega'};\R^r)$,
\begin{equation}
\ip{\partial_i^*X}{Y}_{W,\Omega'}
=
\ip{X}{\partial_iY}_{W,\Omega'}
-
\int_{\partial\Omega'}\nu_i\,X^\top WY\,dS.
\label{eq:neumann-ibp}
\end{equation}
Moreover, a direct computation shows that
\[
[\partial_j,\partial_i^*]Y
=
-\partial_j\bigl(W^{-1}\partial_iW\bigr)Y
=
W^{-1}\Theta_{ji}(W)Y .
\]
Applying \eqref{eq:neumann-ibp} with $X=\mathcal LG$ and $Y=\partial_jG$,
summing over $j$, and using $\partial_\nu G=0$, we obtain
\[
\norm{\mathcal LG}_{W,\Omega'}^2
=
\nlsum_{j=1}^n\ip{\mathcal LG}{\partial_j^*\partial_jG}_{W,\Omega'}
=
\nlsum_{j=1}^n\ip{\partial_j\mathcal LG}{\partial_jG}_{W,\Omega'}.
\]
Next, $\partial_j\mathcal LG=\nlsum_i\partial_i^*\partial_{ij}G+\nlsum_iW^{-1}\Theta_{ji}(W)\partial_iG$,
and applying \eqref{eq:neumann-ibp} once more, now with $X=\partial_{ij}G$
and $Y=\partial_jG$, together with $\Theta_{ji}(W)^\top=\Theta_{ij}(W)$,
yields
\begin{align*}
\norm{\mathcal LG}_{W,\Omega'}^2
={}&
\nlsum_{i,j=1}^n\norm{\partial_{ij}G}_{W,\Omega'}^2
+
\int_{\Omega'}\nlsum_{i,j=1}^n(\partial_iG)^\top\Theta_{ij}(W)(\partial_jG)\,dx\\
&-
\int_{\partial\Omega'}\nlsum_{j=1}^n(\partial_\nu\partial_jG)^\top W(\partial_jG)\,dS.
\end{align*}
It remains to identify the boundary term. Fix a point of $\partial\Omega'$
and the orthonormal basis $\nu,\tau_1,\dots,\tau_{n-1}$ of $\R^n$. The sum
$\nlsum_j(D^2G[\nu,e_j])^\top W(DG[e_j])$ is invariant under orthonormal
changes of basis, and $DG[\nu]=\partial_\nu G=0$; hence
\[
\nlsum_{j=1}^n(\partial_\nu\partial_jG)^\top W(\partial_jG)
=
\nlsum_{a=1}^{n-1}\bigl(D^2G[\nu,\tau_a]\bigr)^\top W\bigl(DG[\tau_a]\bigr).
\]
Extend $\nu$ to a smooth unit vector field on a neighbourhood of
$\partial\Omega'$. Since $DG[\nu]=0$ on $\partial\Omega'$, differentiating
in the tangential direction $\tau_a$ gives
$D^2G[\tau_a,\nu]+DG[D_{\tau_a}\nu]=0$ on $\partial\Omega'$, where
$D_{\tau_a}\nu=\nlsum_bS_{ab}\tau_b$ is tangential. Therefore
\[
\nlsum_{a=1}^{n-1}\bigl(D^2G[\nu,\tau_a]\bigr)^\top W\bigl(DG[\tau_a]\bigr)
=
-\nlsum_{a,b=1}^{n-1}S_{ab}\,(\partial_{\tau_b}G)^\top W(\partial_{\tau_a}G),
\]
which, by the symmetry of $S$ and $W$, proves \eqref{eq:matrix-bochner}.

\paragraph{Step 2: The Poincar\'e inequality on $\Omega'$.}
Let $\Omega'$ be as in Step 1. Since $\overline{\Omega'}\subset\Omega$ is
compact, $W$ is $C^\infty$ and uniformly positive definite on
$\overline{\Omega'}$, and $\mathcal E_{\Omega'}$ is the bilinear form of
the strongly elliptic system $G\mapsto-\nlsum_i\partial_i(W\partial_iG)$
with $C^\infty$ coefficients, whose conormal derivative is
$W\partial_\nu G$. Let $\Phi\in C^\infty(\overline{\Omega'};\R^r)$ satisfy
$\int_{\Omega'}W\Phi\,dx=0$. On the closed subspace
$\{G\in H^1(\Omega';\R^r):\int_{\Omega'}WG\,dx=0\}$ the form
$\mathcal E_{\Omega'}$ is coercive, by the Poincar\'e--Wirtinger
inequality on the bounded Lipschitz domain $\Omega'$. Hence, by the
Lax--Milgram theorem, there is $G\in H^1(\Omega';\R^r)$ with
$\mathcal E_{\Omega'}(G,\varphi)=\ip{\Phi}{\varphi}_{W,\Omega'}$ for all
$\varphi$ in this subspace, and then for all $\varphi\in H^1(\Omega';\R^r)$,
since both sides vanish for constant $\varphi$. That is, $G$ is a weak
solution of the Neumann problem
$-\nlsum_i\partial_i(W\partial_iG)=W\Phi$ in $\Omega'$, $W\partial_\nu G=0$
on $\partial\Omega'$. By elliptic regularity up to the boundary for
strongly elliptic systems with smooth coefficients on smooth domains
\cite[Chapter~4]{McLean2000} (see also \cite{AgmonDouglisNirenberg1964}),
$G\in C^\infty(\overline{\Omega'};\R^r)$, so that $\mathcal LG=\Phi$ in
$\Omega'$ and $\partial_\nu G=0$ on $\partial\Omega'$.

Now let $F\in H^1(\Omega';\R^r)$ with $\int_{\Omega'}WF\,dx=0$, and let
$\Phi$ and $G$ be as above. Integrating by parts (using
$\partial_\nu G=0$), and using the Cauchy--Schwarz inequality for the
nonnegative form $\mathcal E_{\Omega'}$ together with
\eqref{eq:bochner-spectral-gap}, we obtain
\begin{align*}
|\ip{F}{\Phi}_{W,\Omega'}|
=
|\ip{F}{\mathcal LG}_{W,\Omega'}|
=
|\mathcal E_{\Omega'}(F,G)|
&\le
\mathcal E_{\Omega'}(F,F)^{1/2}\,\mathcal E_{\Omega'}(G,G)^{1/2}\\
&\le
\lambda^{-1/2}\,\mathcal E_{\Omega'}(F,F)^{1/2}\,\norm{\Phi}_{W,\Omega'} .
\end{align*}
Smooth fields $\Phi$ with $\int_{\Omega'}W\Phi\,dx=0$ are dense in
$\{\Phi\in L^2(\Omega',W\,dx):\int_{\Omega'}W\Phi\,dx=0\}$, a space which
contains $F$; choosing $\Phi\to F$ in $L^2(\Omega',W\,dx)$ therefore gives
$\norm{F}_{W,\Omega'}^2\le\lambda^{-1}\mathcal E_{\Omega'}(F,F)$. Applying
this to $F-\bar F$, where
$\bar F=(\int_{\Omega'}W\,dx)^{-1}\int_{\Omega'}WF\,dx$, we conclude that
every $F\in H^1(\Omega';\R^r)$ satisfies
\begin{equation}
\int_{\Omega'}F^\top WF\,dx
-
\Bigl(\int_{\Omega'}WF\,dx\Bigr)^\top
\Bigl(\int_{\Omega'}W\,dx\Bigr)^{-1}
\Bigl(\int_{\Omega'}WF\,dx\Bigr)
\le
\lambda^{-1}\mathcal E_{\Omega'}(F,F).
\label{eq:poincare-subdomain}
\end{equation}

\paragraph{Step 3: Exhaustion.}
For $k\in\mathbb N$ let
$K_k=\{x\in\Omega:\norm{x}_2\le k,\ \operatorname{dist}(x,\R^n\setminus\Omega)\ge1/k\}$
(with the convention $\operatorname{dist}(x,\emptyset)=\infty$). These are
compact convex subsets of $\Omega$ with $\bigcup_kK_k=\Omega$, and they
have nonempty interior for all large $k$. Fix such a $k$ and choose
$\rho_k>0$ with $K_k+2\rho_kB_2^n\subset\Omega$. Since convex bodies with
$C^\infty$ boundary and positive Gauss curvature are dense in the space of
convex bodies with respect to the Hausdorff metric
\cite[Section~3.4]{Schneider2014}, there is such a body $C_k$ whose
Hausdorff distance to $K_k+\rho_kB_2^n$ is less than $\rho_k/2$. Comparing
support functions, $K_k+\frac{\rho_k}{2}B_2^n\subseteq C_k\subseteq K_k+2\rho_kB_2^n$.
Hence $\Omega_k:=\operatorname{int}C_k$ is a bounded convex domain with
$C^\infty$ boundary, $\overline{\Omega_k}\subset\Omega$ and
$K_k\subseteq\Omega_k$; in particular $\one_{\Omega_k}\to\one_\Omega$
pointwise as $k\to\infty$.

Let $F\in C^1(\Omega;\R^r)$ with $\int_\Omega F^\top WF\,dx<\infty$. Then
$F|_{\Omega_k}\in C^1(\overline{\Omega_k};\R^r)\subseteq H^1(\Omega_k;\R^r)$,
so \eqref{eq:poincare-subdomain} holds on each $\Omega_k$, and
$\mathcal E_{\Omega_k}(F,F)\le\mathcal E_\Omega(F,F)$. The functions
$F^\top WF$, $\norm{W}_{\mathrm{op}}$ and
$\norm{WF}_2\le\norm{W}_{\mathrm{op}}^{1/2}(F^\top WF)^{1/2}$ are
integrable over $\Omega$, by assumption and by the Cauchy--Schwarz
inequality. Dominated convergence therefore gives
$\int_{\Omega_k}F^\top WF\,dx\to\int_\Omega F^\top WF\,dx$,
$\int_{\Omega_k}WF\,dx\to\int_\Omega WF\,dx$, and
$\int_{\Omega_k}W\,dx\to\int_\Omega W\,dx\succ0$. Letting $k\to\infty$ in
\eqref{eq:poincare-subdomain} proves \eqref{eq:abstract-matrix-poincare}.

\subsection{Proof of Lemma \ref{lem:cofactor-poincare}}

On the interior of $\Omega$, write 
\begin{equation} K(x)=\det(H(x))^4H(x)^{-1}. \label{eq:cofactor-weight-K} \end{equation}
Since $d\gamma_n$ is a constant multiple of $e^{-\norm{x}^2/2}dx$,
inequality \eqref{eq:cofactor-poincare} is precisely
\eqref{eq:abstract-matrix-poincare} for the weight
$W(x)=e^{-\norm{x}^2/2}K(x)$ on the nonempty open convex set
$\Omega=\{x:H(x)\succ0\}$, with $\lambda=1$. We verify the hypotheses of
Lemma~\ref{lem:matrix-weighted-curvature}. Clearly, $W$ is $C^\infty$ and
positive definite on $\Omega$, and $\int_\Omega W\,dx<\infty$ because
$K=\det(H)^3\operatorname{adj}(H)$ is polynomially bounded. It remains to
verify the curvature condition \eqref{eq:abstract-curvature-gap}.
To this end, we introduce the following matrix-weighted analogue of the
Hessian curvature term appearing in the classical weighted Bochner identity. For a smooth positive-definite matrix weight $M$, we define its curvature
block matrix by
\begin{equation}
\Theta(M) = [\Theta_{ij}(M)]_{ij}^n, \qquad
\Theta_{ij}(M)
=
(\partial_iM)M^{-1}(\partial_jM)-\partial_{ij}M,
\qquad 1\le i,j\le n.
\label{eq:matrix-curvature-definition}
\end{equation}
The curvature transforms particularly simply under multiplication by a
positive scalar weight. Namely, if $\rho>0$ is scalar-valued, then a direct
calculation gives
\begin{equation}
\Theta_{ij}(\rho M)
=
\rho\,\Theta_{ij}(M)
-
\rho\,\partial_{ij}\log\rho\,M.
\label{eq:scalar-curvature-transformation}
\end{equation}
We first establish that the curvature block of $K$ is positive
semidefinite. To this end, fix $x_0\in\Omega$. Since the calculation is invariant under a fixed
congruence, after conjugating by $H(x_0)^{-1/2}$ we may assume that $H(x_0)=I_r$. For ease of notation, we continue to denote the congruence-normalized coefficient matrices by
$E_i$. Direct differentiation of \eqref{eq:cofactor-weight-K} at $x_0$
gives
\begin{align}
\partial_iK
={}&
4\Tr(E_i)I_r-E_i\nonumber\\
\partial_{ij}K
={}&
-4\Tr(E_iE_j)I_r
+16\Tr(E_i)\Tr(E_j)I_r
\nonumber\\
&\quad
-4\Tr(E_i)E_j
-4\Tr(E_j)E_i
+E_jE_i+E_iE_j.
\label{eq:cofactor-second-derivative}
\end{align}
Since $K(x_0)=I_r$ under the normalization $H(x_0)=I_r$, substituting
\eqref{eq:cofactor-second-derivative} into
\eqref{eq:matrix-curvature-definition} yields
\begin{equation}
\Theta_{ij}(K)
=
4\Tr(E_iE_j)I_r-E_jE_i.
\label{eq:cofactor-curvature}
\end{equation}

We now verify positivity of the full block matrix
$[\Theta_{ij}(K)]_{i,j=1}^n$. For arbitrary
$z_1,\dots,z_n\in\R^r$, define the tensor
\begin{equation*}
T_{abc}
=
\nlsum_{i=1}^n(E_i)_{ab}(z_i)_c.
\end{equation*}
By direct expansion, we get
\begin{equation}
\nlsum_{i,j=1}^n
\Tr(E_iE_j)\ip{z_i}{z_j}
=
\nlsum_{a,b,c}T_{abc}^2
=
\norm{T}_2^2,
\label{eq:tensor-first-term}
\end{equation}
while a rearrangement of indices gives
\begin{equation}
\nlsum_{i,j=1}^n
z_i^\top E_jE_i z_j
=
\nlsum_{a,b,c}T_{abc}T_{bca}.
\label{eq:tensor-cyclic-term}
\end{equation}
The cyclic permutation
\[
(T_{abc})\longmapsto(T_{bca})
\]
is orthogonal on the underlying Euclidean tensor space. Hence,
by Cauchy--Schwarz,
\begin{equation}
\Bigl|
\nlsum_{a,b,c}T_{abc}T_{bca}
\Bigr|
\le
\norm{T}_2^2.
\label{eq:tensor-cyclic-bound}
\end{equation}
Combining equations \eqref{eq:cofactor-curvature}--
\eqref{eq:tensor-cyclic-bound}, we obtain
\begin{align}
\nlsum_{i,j=1}^n
z_i^\top\Theta_{ij}(K)z_j
&=
4\norm{T}_2^2
-
\nlsum_{a,b,c}T_{abc}T_{bca}
\nonumber\\
&\ge
3\norm{T}_2^2
\ge0,
\label{eq:cofactor-curvature-positive}
\end{align}
and therefore, we have that
\begin{equation}
[\Theta_{ij}(K)]_{i,j=1}^n\succeq0
\qquad\text{on }\Omega.
\label{eq:cofactor-curvature-block-positive}
\end{equation}
Now, we consider the matrix weighted Gaussian measure   by $W(x) =e^{-\norm{x}^2/2}K(x)$. The curvature block matrix associated with $W$ is given by
\begin{equation}
\Theta_{ij}(W)
=
e^{-\norm{x}^2/2}\Theta_{ij}(K)
+
\delta_{ij}W,
\label{eq:gaussian-curvature-decomposition}
\end{equation}
where we used that $-\partial_{ij}\log\rho=\delta_{ij}.$ Using that $[\Theta_{ij}(K)]_{i,j=1}^n\succeq0$, we obtain
\begin{equation}
[\Theta_{ij}(W)]_{i,j=1}^n
\succeq
[\delta_{ij}W]_{i,j=1}^n.
\label{eq:gaussian-curvature-gap}
\end{equation}
Thus all hypotheses of Lemma~\ref{lem:matrix-weighted-curvature} are
satisfied by $W$ with $\lambda=1$, and \eqref{eq:abstract-matrix-poincare}
is precisely \eqref{eq:cofactor-poincare}. This proves the first
assertion of Lemma~\ref{lem:cofactor-poincare}.

Finally, suppose that the weight is multiplied by a scalar
log-concave factor $e^{-U}$ with $U\in C^\infty(\Omega)$ convex. By convexity of $U$ we have $[\partial_{ij}U]_{i,j=1}^n\succeq0.$ Applying \eqref{eq:scalar-curvature-transformation} once more shows
that this multiplication adds the positive block $[(\partial_{ij}U)W]_{i,j=1}^n$
to the curvature. Hence the lower bound
\eqref{eq:gaussian-curvature-gap} remains valid, and
Lemma~\ref{lem:matrix-weighted-curvature} applies exactly as before.

\subsection{Proof of Proposition \ref{prop:S-bound}}

To bring the resolvent $G=(I-C^2)^{-1}$ within the scope of
Lemma~\ref{lem:cofactor-poincare}, we introduce the following affine linearization of $I-C^2$ 
\begin{equation}
\label{eq:H-lift}
H(g)=
\begin{pmatrix}
I&C(g)\\
C(g)&I
\end{pmatrix}.
\end{equation}
Since $C(g)=\nlsum_i g_iD_i$, \eqref{eq:H-lift} is of the form
\[
H(g)=H_0+\nlsum_i g_iE_i,
\qquad
H_0=
\begin{pmatrix}
I&0\\
0&I
\end{pmatrix},
\qquad
E_i=
\begin{pmatrix}
0&D_i\\
D_i&0
\end{pmatrix}.
\]
Moreover,
\[
H\succ0\iff\opnorm C<1,
\qquad
\det H=\det(I-C^2),
\]
and
\[
H^{-1}
=
\begin{pmatrix}
G&-CG\\
-CG&G
\end{pmatrix}.
\]
Consequently, the matrix weight in
Lemma~\ref{lem:cofactor-poincare} is
\[
\mathcal K(g)
=
\det(I-C(g)^2)^4H(g)^{-1}
\one_{\{\opnorm{C(g)}<1\}},
\]
and hence, by the definition of $\nu$,
\begin{equation}
\frac{1}{Z_C}\mathcal K(g)\,d\gamma_n(g)
=
H(g)^{-1}\,d\nu(g).
\label{eq:H-weight-nu}
\end{equation}
Thus the matrix-weighted Poincar\'e inequality translates directly
into an inequality for expectations under $\nu$ involving the resolvent
$G$.
To extract an useful inequality for $S=\E_\nu G$, we fix
$e\in\R^{d-1}$ and consider the test function
\[
F_e(g)
=
\begin{pmatrix}
C(g)e\\
0
\end{pmatrix}.
\]
Since $F_e$ is linear in $g$ and $\mathcal K$ is polynomially bounded, we have $\int F_e^\top\mathcal KF_e\,d\gamma_n<\infty$, so that Lemma~\ref{lem:cofactor-poincare} applies to $F_e$; it takes the form
\begin{align}
&\E_\nu\![F_e^\top H^{-1}F_e]
-
(\E_\nu[H^{-1}F_e])^\top
(\E_\nu H^{-1})^{-1}
(\E_\nu[H^{-1}F_e])
\nonumber\\
&\hspace{25mm}
\le
\E_\nu\!\Bigl[
\nlsum_{i=1}^n
(\partial_iF_e)^\top
H^{-1}
(\partial_iF_e)
\Bigr].
\label{eq:Fe-poincare}
\end{align}
We compute the three terms in \eqref{eq:Fe-poincare} separately. We begin with the first quadratic term on the left-hand side. Using $C^2G=G-I,$
we obtain
\begin{equation}
F_e^\top H^{-1}F_e
=
e^\top C^2Ge
=
e^\top(G-I)e,
\label{eq:Fe-norm}
\end{equation}
so that by taking expectation with respect to $\nu$ we get that
\begin{equation}
\E_\nu\![F_e^\top H^{-1}F_e]
=
e^\top(S-I)e.
\label{eq:Fe-first-term}
\end{equation}
We next compute the mean-correction term. From the block representation
of $H^{-1}$ we have,
\[
H^{-1}F_e
=
\begin{pmatrix}
G&-CG\\
-CG&G
\end{pmatrix}
\begin{pmatrix}
Ce\\
0
\end{pmatrix}
=
\begin{pmatrix}
CGe\\
-C^2Ge
\end{pmatrix}
=
\begin{pmatrix}
CGe\\
-(G-I)e
\end{pmatrix}.
\]
Since $C$ depends linearly on $g$, we have $C(-g)=-C(g)$, whereas 
\[
G(-g)
=
\bigl(I-C(-g)^2\bigr)^{-1}
=
\bigl(I-C(g)^2\bigr)^{-1}
=
G(g).
\]
Thus $CG$ is odd, while $G-I$ is even. Moreover, the density defining
$\nu$ is invariant under $g\mapsto -g$, since it depends on $g$ only
through $C(g)^2$, and therefore, we get that $\E_\nu[CG]=0$.
In particular, this means that, 
\begin{equation}
\E_\nu[H^{-1}F_e]
=
\begin{pmatrix}
0\\
-(S-I)e
\end{pmatrix}.
\label{eq:Fe-mean}
\end{equation}

The same parity argument, applied directly to $H^{-1}$, gives us
\begin{equation}
\E_\nu H^{-1}
=
\E_\nu
\begin{pmatrix}
G&-CG\\
-CG&G
\end{pmatrix}
=
\begin{pmatrix}
S&0\\
0&S
\end{pmatrix}.
\label{eq:H-inverse-mean}
\end{equation}
Therefore, by combining \eqref{eq:Fe-mean} and \eqref{eq:H-inverse-mean} we have,
\begin{align}
&(\E_\nu[H^{-1}F_e])^\top
(\E_\nu H^{-1})^{-1}
(\E_\nu[H^{-1}F_e])
\nonumber\\
&\hspace{20mm}
=
e^\top(S-I)S^{-1}(S-I)e.
\label{eq:Fe-mean-correction}
\end{align}
Finally, using \eqref{eq:Fe-first-term} and
\eqref{eq:Fe-mean-correction}, the left-hand side of
\eqref{eq:Fe-poincare} simplifies to
\begin{align}
&e^\top(S-I)e
-
e^\top(S-I)S^{-1}(S-I)e
\nonumber\\
&\hspace{20mm}
=
e^\top(I-S^{-1})e.
\label{eq:Fe-left-side}
\end{align}

It remains to compute the Dirichlet term on the right-hand side of
\eqref{eq:Fe-poincare}. To this end, we use
$\partial_iF_e
=
\begin{pmatrix}
D_ie\\
0
\end{pmatrix}$ to obtain
\begin{align}
(\partial_iF_e)^\top H^{-1}(\partial_iF_e)
&=
\begin{pmatrix}
e^\top D_i&0
\end{pmatrix}
\begin{pmatrix}
G&-CG\\
-CG&G
\end{pmatrix}
\begin{pmatrix}
D_ie\\
0
\end{pmatrix}
\nonumber\\
&=
e^\top D_iGD_i e.
\label{eq:Fe-dirichlet-pointwise}
\end{align}
Taking expectation and summing over $i$ therefore gives,
\begin{equation}
\E_\nu\!\Bigl[
\nlsum_{i=1}^n
(\partial_iF_e)^\top H^{-1}(\partial_iF_e)
\Bigr]
=
e^\top
\Bigl(\nlsum_{i=1}^n D_iSD_i\Bigr)e.
\label{eq:Fe-dirichlet}
\end{equation}

Substituting \eqref{eq:Fe-left-side} and
\eqref{eq:Fe-dirichlet} into \eqref{eq:Fe-poincare}, we obtain
\[
e^\top(I-S^{-1})e
\le
e^\top
\Bigl(\nlsum_{i=1}^nD_iSD_i\Bigr)e.
\]
Since $e\in\R^{d-1}$ was arbitrary, it follows that
\begin{equation}
I-S^{-1}
\preceq
\nlsum_{i=1}^nD_iSD_i.
\label{eq:S-master}
\end{equation}

Now, let $L=\opnorm{S}$. Using \eqref{eq:scaled-top-variance} we have $\nlsum_iD_i^2\preceq\frac1{16}I$, and hence also $$\nlsum_iD_iSD_i
\preceq
L\nlsum_iD_i^2
\preceq
\frac{L}{16}I.$$
Therefore, evaluating \eqref{eq:S-master} on the  top unit eigenvector of $S$ gives us $1-\frac1L\le\frac{L}{16}$, or equivalently 
\begin{equation}\label{eq:L-quadratic}
L^2-16L+16\ge0.
\end{equation}
Now, since the quadratic inequality \eqref{eq:L-quadratic} factors as
\[
\bigl(L-(8-4\sqrt3)\bigr)
\bigl(L-(8+4\sqrt3)\bigr)\ge0,
\]
we have that
\[
L\le 8-4\sqrt3
\qquad\text{or}\qquad
L\ge 8+4\sqrt3.
\]
Thus \eqref{eq:L-quadratic}  leaves us with two possible regimes for $L$.
To obtain the desired upper bound, it remains to exclude the second
possibility. We do so by a continuity argument, deforming $C$ to $tC$
for $0\le t\le1$. More precisely, for $t\in[0,1]$, we replace $C$ by $tC$ and let
\[
G_t=(I-t^2C^2)^{-1},
\qquad
S_t=\E_{\nu_t}G_t,
\qquad
L_t=\norm{S_t}_{\text{op}},
\]
where $\nu_t$ denotes the corresponding tilted Gaussian measure. The
preceding argument applies verbatim with $C$ replaced by $tC$, and hence we obtain that
$1-\frac{1}{L_t}
\le
\frac{t^2L_t}{16},$ or equivalently, $t^2L_t^2-16L_t+16\ge0.$

At $t=0$, we have $G_0=I$ and therefore $S_0=I$, so that $L_0=1$.
We claim that $t\mapsto S_t$, and hence $t\mapsto L_t$, is continuous.
Indeed, recall that $S_t$ has the following form
\[
S_t
=
\frac{
\E\!\bigl[
\det(I-t^2C^2)^4
(I-t^2C^2)^{-1}
\one_{\{\norm{tC}_{\text{op}}<1\}}
\bigr]
}{
\E\!\bigl[
\det(I-t^2C^2)^4
\one_{\{\norm{tC}_{\text{op}}<1\}}
\bigr]
}.
\]
After extending both integrands by zero outside
$\{\norm{tC}_{\text{op}}<1\}$, we have the point-wise bounds
\[
0\le
\det(I-t^2C^2)^4
\one_{\{\norm{tC}_{\text{op}}<1\}}
\le1
\]
and
\[
0\preceq
\det(I-t^2C^2)^4
(I-t^2C^2)^{-1}
\one_{\{\norm{tC}_{\text{op}}<1\}}
\preceq I.
\]
Thus, dominated convergence shows that both the numerator and denominator
depend continuously on $t$, and therefore so do $S_t$ and $L_t$.

Now, note that for each $t$, the quadratic inequality $t^2L_t^2-16L_t+16\ge0$ forces $L_t$ to lie on one
of two disjoint branches 
\[
L_t\le \lambda_-(t)
\qquad\text{or}\qquad
L_t\ge \lambda_+(t),
\]
where $\lambda_\pm(t)
=
\frac{8\pm4\sqrt{4-t^2}}{t^2}.$ 
Moreover, $\lambda_-(t)\longrightarrow 1,
\lambda_+(t)\longrightarrow\infty$ as 
$t\downarrow0$.

Since $L_0=1$,  the path $L_t$
therefore starts in the lower admissible region
$L_t\le\lambda_-(t)$. Since $t\mapsto L_t$ is continuous, it cannot enter the upper admissible region
$L_t\ge\lambda_+(t)$ without first passing through the excluded interval
$\bigl(\lambda_-(t),\lambda_+(t)\bigr)$. Hence, $L_t\le\lambda_-(t)$
for all $t\in[0,1].$  In particular, at $t=1$ we get $L=L_1
\le
\lambda_-(1)
=
8-4\sqrt3$, which concludes our proof.

\subsection{Proof of Proposition \ref{prop:q-bound}}

The proof is based again on the matrix-weighted Poincar\'e inequality from
Lemma~\ref{lem:cofactor-poincare}. We apply it to a suitable test function
to control the dependence between the off-diagonal block $b$ and the
resolvent $G=(I-C^2)^{-1}$. To this end, we use the same block matrix $H$ introduced in
\eqref{eq:H-lift} in the proof of Proposition~\ref{prop:S-bound}:
\[
H(g)
=
\begin{pmatrix}
I&C(g)\\
C(g)&I
\end{pmatrix}.
\]
We will consider the test function
\[
F_b(g)
=
\begin{pmatrix}
b(g)\\
0
\end{pmatrix}.
\]
This choice is tailored so that the leading quadratic term in
matrix-weighted Poincar\'e inequality is precisely the quantity
$Q=\E_\nu q$ that we wish to control, while the Dirichlet term involves
only the coefficient vectors $u_i$. Indeed, using
\[
H^{-1}
=
\begin{pmatrix}
G&-CG\\
-CG&G
\end{pmatrix},
\]
we  obtain that $F_b^\top H^{-1}F_b
=
b^\top G b,$
and hence also $\E_\nu[F_b^\top H^{-1}F_b]
=
Q.$

We next compute the weighted mean appearing in the Poincar\'e
inequality. Since
\[
H^{-1}F_b
=
\begin{pmatrix}
Gb\\
-CGb
\end{pmatrix},
\]
and both $b$ and $C$ are odd under $g\mapsto-g$, while $G$ and the
measure $\nu$ are even, the  $Gb$ is odd. Therefore, we get that $\E_\nu[H^{-1}F_b]
=
\begin{pmatrix}
0\\
-m
\end{pmatrix}$ and $\E_\nu H^{-1}
=
\begin{pmatrix}
S&0\\
0&S
\end{pmatrix},$
where $m
=
\E_\nu[CGb].$
Hence,   the mean-correction term is given by 
\[
(\E_\nu[H^{-1}F_b])^\top
(\E_\nu H^{-1})^{-1}
(\E_\nu[H^{-1}F_b])
=
m^\top S^{-1}m.
\]

Finally, using $\partial_iF_b
=
\begin{pmatrix}
u_i\\
0
\end{pmatrix},$ for the Dirichlet term we obtain that
\begin{align*}
\E_\nu\nlsum_{i=1}^n
(\partial_iF_b)^\top H^{-1}(\partial_iF_b)
&=
\nlsum_{i=1}^n \E_\nu[u_i^\top G u_i] \\
&=
\nlsum_{i=1}^n u_i^\top S u_i \\
&=
\Tr(US).
\end{align*}
Since $F_b$ is linear in $g$, Lemma~\ref{lem:cofactor-poincare} applies (cf.\ the proof of Proposition~\ref{prop:S-bound}) and yields
\begin{equation}
Q-m^\top S^{-1}m
\le
\Tr(US).
\label{eq:Q-mean-correction}
\end{equation}

Now, to obtain a bound on $Q$, we must control the mean-correction term in
\eqref{eq:Q-mean-correction}. For any $z\in\R^{d-1}$,
Cauchy--Schwarz gives
\begin{align}
(z^\top m)^2
&=
\bigl(
\E_\nu
\langle
G^{1/2}Cz,\,
G^{1/2}b
\rangle
\bigr)^2
\nonumber\\
&\le
(
\E_\nu z^\top C^2Gz
)
(
\E_\nu b^\top Gb
)
\nonumber\\
&=
Q\,z^\top(S-I)z,
\label{eq:m-cauchy-schwarz}
\end{align}
where in the last equality we used $C^2G=G-I$. Since
\eqref{eq:m-cauchy-schwarz} holds for every $z$, it is equivalent to $mm^\top
\preceq
Q(S-I).$
Conjugating by $S^{-1/2}$ gives $S^{-1/2}mm^\top S^{-1/2}
\preceq
Q(I-S^{-1}).$
The matrix on the left has rank one, with its only nonzero eigenvalue
equal to $m^\top S^{-1}m$. Since Proposition~\ref{prop:S-bound} gives $I\preceq S\preceq\Lambda I,$
we have
\[
I-S^{-1}
\preceq
\Bigl(1-\frac1\Lambda\Bigr)I,
\]
and therefore
\begin{equation}
m^\top S^{-1}m
\le
Q\Bigl(1-\frac1\Lambda\Bigr).
\label{eq:mean-correction-bound}
\end{equation}

Combining \eqref{eq:Q-mean-correction} and
\eqref{eq:mean-correction-bound}, we obtain $\frac{Q}{\Lambda}
\le
Q-m^\top S^{-1}m
\le
\Tr(US).$
Using once more $S\preceq\Lambda I$ and
$\Tr U=\sigma$, we conclude that
\[
\frac{Q}{\Lambda}
\le
\Tr(US)
\le
\Lambda\Tr U
=
\Lambda\sigma,
\]
and thus $Q\le\Lambda^2\sigma$, which is the desired estimate.

\section{Some technical statements for the proof of Theorem \ref{thm:mesoscopic-smallball}}

\subsection{Proof of Lemma \ref{lem:tail-divided-difference}}

We will first prove the lower bound on $r_q'^{[1]}$ in \eqref{eq:tail-divided-difference-lower}.
To this end, we record the first-and second-order derivatives of $r_q$, 
\[
r_q'(t)
=
\frac{2t^{2q-1}}{1-t^2},
\qquad
r_q''(t)
=
2t^{2q-2}
\frac{(2q-1)-(2q-3)t^2}{(1-t^2)^2}.
\]
 We claim that, with
$a_0=1/64$,
\begin{equation}
r_q'^{[1]}(x,y)
\ge
\frac{1}{2(1-x)(1-y)} 
\label{eq:tail-one-sided-lower}
\end{equation}
whenever $\min\{1-x,1-y\}\le \frac{a_0}{q}.$
By symmetry we may assume that $x\ge y$. Let $\delta=1-x$ and $\varepsilon=1-y$,
so that $0<\delta\le\frac{1}{64q}$ for $\varepsilon\ge\delta$. 
We distinguish two cases. Suppose first that $\varepsilon\le\frac{1}{4q}$. Then, for every $t\in[y,x]$, we  have $t\ge1-\frac{1}{4q}.$
But since $(2q-1)-(2q-3)t^2
=
2+(2q-3)(1-t^2)
\ge2,$
and
\[
t^{2q-2}
\ge
\Bigl(1-\frac{1}{4q}\Bigr)^{2q-2}
\ge
1-\frac{2q-2}{4q}
\ge\frac12,
\]
it follows that $r_q''(t)
\ge
\frac{1}{2(1-t)^2}.$
Hence, if $x\neq y$,
\begin{align*}
r_q'^{[1]}(x,y)
&=
\frac{1}{x-y}\int_y^x r_q''(t)\,dt\\
&\ge
\frac{1}{2(x-y)}
\int_y^x\frac{dt}{(1-t)^2}\\
&=
\frac{1}{2(x-y)}
\Bigl(
\frac{1}{1-x}-\frac{1}{1-y}
\Bigr)\\
&=
\frac{1}{2(1-x)(1-y)}.
\end{align*}
Moreover, also for $x=y$ the same lower bound holds, since  $r_q'^{[1]}(x,x)
=
r_q''(x)
\ge
\frac{1}{2(1-x)^2}$.

Suppose now that $\varepsilon>\frac{1}{4q}.$
Since $\delta\le1/(64q)$, using the inequality $(1+x)^n\ge1+nx$ gives $x^{2q-1}
=
(1-\delta)^{2q-1}
\ge
1-(2q-1)\delta
\ge
\frac{31}{32}.$
Consequently, we have that
\[
r_q'(x)
=
\frac{2x^{2q-1}}{\delta(1+x)}
\ge
\frac{31}{32\delta}.
\]

On the other hand, since $\varepsilon=1-y$, we have $r_q'(y)
=
\frac{2y^{2q-1}}{\varepsilon(1+y)}.$
Moreover, if $y\ge0$, then $y^{2q-1}\le1$ and $1+y\ge1$, and hence
$r_q'(y)\le\frac{2}{\varepsilon}$.
If $y<0$, then $r_q'(y)<0$, so the same upper bound holds trivially in this case. 
 Since $\frac{\delta}{\varepsilon}
<
\frac{1/(64q)}{1/(4q)}
=
\frac1{16},$
we obtain $r_q'(x)-r_q'(y)
\ge
\frac{31}{32\delta}-\frac{2}{\varepsilon}
\ge
\frac{27}{32\delta}.$
Moreover, $x-y=\varepsilon-\delta\le\varepsilon$, and therefore
\[
r_q'^{[1]}(x,y)
=
\frac{r_q'(x)-r_q'(y)}{x-y}
\ge
\frac{27}{32\delta\varepsilon}
\ge
\frac{1}{2\delta\varepsilon},
\]
which proves \eqref{eq:tail-one-sided-lower}. But, since  $r_q'$ is odd, its divided difference satisfies $r_q'^{[1]}(-x,-y)=r_q'^{[1]}(x,y).$
Applying \eqref{eq:tail-one-sided-lower} to $(-x,-y)$ therefore gives
\begin{equation}\label{eq:rqsecond}
    r_q'^{[1]}(x,y)
\ge
\frac{1}{2(1+x)(1+y)}
\end{equation}
whenever $\min\{1+x,1+y\}\le\frac{a_0}{q}.$

Finally, combining the estimates \eqref{eq:tail-one-sided-lower} and \eqref{eq:rqsecond} yields
\[
r_q'^{[1]}(x,y)
\ge
\frac{1}{4}\Bigl(
\frac{
\one_{\{\min(1-x,\,1-y)\le a_0/q\}}
}{
(1-x)(1-y)
}
+
\frac{
\one_{\{\min(1+x,\,1+y)\le a_0/q\}}
}{
(1+x)(1+y)
}
\Bigr)
\]
which proves the first assertion \eqref{eq:tail-divided-difference-lower} with $c_0=1/4$.

We now turn to proving the upper bound \eqref{eq:tail-divided-difference-upper}. Using $\ell''(t)
=
\frac{2(1+t^2)}{(1-t^2)^2},$ we get  for every $t\in(-1,1)$,
\begin{align*}
\frac{r_q''(t)}{\ell''(t)}
&=
|t|^{2q-2}
\frac{(2q-1)-(2q-3)t^2}{1+t^2}\\
&\le
2q\,|t|^{2q-2}.
\end{align*}
Thus, $r_q''(t)
\le
2q\,|t|^{2q-2}\ell''(t).$
Finally, let $r=\max\{|x|,|y|\}$, then we have
\begin{align*}
r_q'^{[1]}(x,y)
&=
\int_0^1
r_q''\bigl((1-s)y+sx\bigr)\,ds\\
&\le
2q\,r^{2q-2}
\int_0^1
\ell''\bigl((1-s)y+sx\bigr)\,ds\\
&=
2q\,r^{2q-2}\ell'^{[1]}(x,y),
\end{align*}
which proves \eqref{eq:tail-divided-difference-upper} with $C_0=2$.

\subsection{Proof of Lemma \ref{lem:potential-hessian-lower}}

Using that $v_{q,\alpha,\beta}
=
\beta\ell+(\alpha-\beta)r_q,$ we obtain
\begin{equation}
D^2\mathcal V(B)[Y,Y]
=
\beta D^2\Tr\ell(B)[Y,Y]
+
(\alpha-\beta)D^2\Tr r_q(B)[Y,Y].
\label{eq:potential-hessian-decomposition}
\end{equation}
Moreover, using  equation \eqref{eq:log-barrier-hessian} and 
$\norm{E^\rho(Y)}_F^2
=
\norm{E_{\mathrm{bdry}}^\rho(Y)}_F^2
+
\norm{E_{\mathrm{bulk}}^\rho(Y)}_F^2$, we have that
\begin{equation}
    D^2\Tr\ell(B)[Y,Y] = \nlsum_{\rho\in\{\pm1\}}
\bigl(
\norm{E_{\mathrm{bdry}}^\rho(Y)}_F^2
+
\norm{E_{\mathrm{bulk}}^\rho(Y)}_F^2
\bigr).
\label{eq:log-hessian-boundary-bulk}
\end{equation}

Substituting \eqref{eq:tail-hessian-lower} and \eqref{eq:log-hessian-boundary-bulk} into
\eqref{eq:potential-hessian-decomposition}, and using
$\alpha\ge\beta$, yields
\begin{align*}
D^2\mathcal V(B)[Y,Y]
&\ge
\nlsum_{\rho\in\{\pm1\}}
\Bigl[
\bigl(\beta+c_0(\alpha-\beta)\bigr)
\norm{E_{\mathrm{bdry}}^\rho(Y)}_F^2
+
\beta
\norm{E_{\mathrm{bulk}}^\rho(Y)}_F^2
\Bigr].
\end{align*}
By Lemma~\ref{lem:tail-divided-difference}, we may take
$c_0=1/4$.  Hence, $\beta+c_0(\alpha-\beta)
=
\frac14\alpha+\frac34\beta
\ge
\frac14\alpha.$
Therefore, we conclude that
\[
D^2\mathcal V(B)[Y,Y]
\ge
\nlsum_{\rho\in\{\pm1\}}
\bigl(
c_0\alpha
\norm{E_{\mathrm{bdry}}^\rho(Y)}_F^2
+
\beta
\norm{E_{\mathrm{bulk}}^\rho(Y)}_F^2
\bigr),
\]
as claimed.

\subsection{Proof of Lemma \ref{lem:barrier-hessian-coercivity}}

Choose an orthonormal basis $\{e_k\}_k$ of $\mathcal H$ and write $z_i=\nlsum_k z_{i,k}e_k.$
For each $k$, we let $Y_k=\nlsum_{i=1}^n z_{i,k}M_i$
By bilinearity of the Hessian, $\nlsum_{i,j=1}^n
D^2\mathcal V(B)[M_i,M_j]\,
\langle z_i,z_j\rangle
=
\nlsum_k D^2\mathcal V(B)[Y_k,Y_k].$
Moreover, we have that $E_{\mathrm{bdry}}^\rho(Y_k)
=
\nlsum_i z_{i,k}E_{i,\mathrm{bdry}}^\rho$ and  $E_{\mathrm{bulk}}^\rho(Y_k)
=
\nlsum_i z_{i,k}E_{i,\mathrm{bulk}}^\rho,$
and therefore, $\nlsum_k
\norm{E_{\mathrm{bdry}}^\rho(Y_k)}_F^2=
\mathfrak h_\rho(z)$ and $\nlsum_k
\norm{E_{\mathrm{bulk}}^\rho(Y_k)}_F^2
=
\mathfrak u_\rho(z).$
Applying now Lemma~\ref{lem:potential-hessian-lower} to each $Y_k$ and
summing over $k$ proves the claim.

\subsection{Proof of Lemma \ref{lem:cyclic-tensor-bounds}}

Define the three-index tensor $T^C_{abc}
=
\nlsum_{i=1}^n
(C_i)_{ab}(z_i)_c,$ so that $\nlsum_{i,j=1}^n
\Tr(C_iC_j)\langle z_i,z_j\rangle
=
\nlsum_{a,b,c}(T^C_{abc})^2
=
\norm{T^C}_2^2.$
On the other hand, a direct rearrangement of indices gives
$\nlsum_{i,j=1}^n
\langle z_i,C_jC_i z_j\rangle
=
\nlsum_{a,b,c}
T^C_{abc}T^C_{bca}.$ Now, note that the map $(T_{abc})\longmapsto(T_{bca})$
is an orthogonal permutation of the Euclidean tensor coordinates, and 
therefore, by Cauchy--Schwarz, $\bigl|
\nlsum_{a,b,c}
T^C_{abc}T^C_{bca}
\bigr|
\le
\norm{T^C}_2^2,$
which proves \eqref{eq:cyclic-tensor-pure}.
The proof of \eqref{eq:cyclic-tensor-mixed} follows by a similar argument, which we omit. 

\subsection{Proof of Lemma \ref{lem:pure-bulk-variance}}

On $\operatorname{ran}(Q_\rho)$ we have $H_\rho\succeq \frac{a_0}{q}I,$
and therefore $Q_\rho H_\rho^{-1}Q_\rho
\preceq
\frac{q}{a_0}\,Q_\rho.$ Using
\[
E_{i,\mathrm{bulk}}^\rho
=
Q_\rho H_\rho^{-1/2}(\rho M_i)H_\rho^{-1/2}Q_\rho,
\]
we obtain
\begin{align}
\nlsum_{i=1}^n
(E_{i,\mathrm{bulk}}^\rho)^2
&\preceq
\frac{q}{a_0}\,
Q_\rho H_\rho^{-1/2}
\Bigl(
\nlsum_{i=1}^n M_iQ_\rho M_i
\Bigr)
H_\rho^{-1/2}Q_\rho
\nonumber\\
&\preceq
\frac{q}{a_0}\,
Q_\rho H_\rho^{-1/2}
\Bigl(
\nlsum_{i=1}^nM_i^2
\Bigr)
H_\rho^{-1/2}Q_\rho
\nonumber\\
&\preceq
\frac{q}{a_0}\eta\,
Q_\rho H_\rho^{-1}Q_\rho
\nonumber\\
&\preceq
\frac{q^2}{a_0^2}\eta I_d,
\end{align}
where we used 
$M_iQ_\rho M_i\preceq M_i^2$ since $0\preceq Q_\rho\preceq I$. This proves \eqref{eq:pure-bulk-variance} with $C=\frac{1}{a_0^2}$. 
Moreover, using the elementary inequality $|\langle u,v\rangle|
\le
\frac12\bigl(\norm{u}_2^2+\norm{v}_2^2\bigr)$
and summing over $i,j$  yields
\begin{align*}
\Bigl|
\nlsum_{i,j}
\langle
z_i,
(E_{j,\mathrm{bulk}}^\rho
E_{i,\mathrm{bulk}}^\rho\otimes I_{\mathcal{H}}) z_j
\rangle
\Bigr|
&\le
\nlsum_i
\biggl\langle
z_i,
\Bigl(
\nlsum_j
(E_{j,\mathrm{bulk}}^\rho)^2\otimes I_{\mathcal{H}}
\Bigr)z_i
\biggr\rangle\\
&\le
\frac{q^2}{a_0^2}\eta\nlsum_i\norm{z_i}_2^2,
\end{align*}
which completes the proof.

\subsection{Proof of Proposition \ref{thm:defective-two-slot}}

To prove \eqref{eq:defective-two-slot} we apply the matrix-weighted
Poincar\'e inequality of Lemma~\ref{lem:matrix-weighted-curvature} to the
weights $W_{\sigma\tau}$ and $W_\rho$ on the nonempty open convex set
$\Omega=\{g\in\R^n:\opnorm{B(g)}<1\}$. We first verify its hypotheses.
Both weights are positive definite on $\Omega$, and they are $C^\infty$
there: indeed,
\[
\mathcal V(B)
=
-\alpha\log\det(I_d-B^2)
-(\alpha-\beta)\nlsum_{k=1}^{q-1}\frac{\Tr(B^{2k})}{k},
\qquad\opnorm B<1,
\]
is real-analytic on $\{\opnorm B<1\}$, and $H_\rho^{-1}$ is rational in
$g$. To see that both weights are integrable over $\Omega$, and that the
Poincar\'e inequality applies to polynomial test fields, we record the
behaviour of the weights near $\partial\Omega$. Define
\begin{equation}
\delta(g)
=
\min_{\rho\in\{\pm1\}}
\lambda_{\min}(H_\rho(g))
=
1-\opnorm{B(g)}.
\label{eq:spectral-boundary-distance}
\end{equation}
Thus $\delta(g)>0$ on $\Omega$, while
$\delta(g)\downarrow0$ as $g$ approaches $\partial\Omega$.

We first record how the weight behaves as $\delta(g)\downarrow0$.
Recall that
\[
v_{q,\alpha,\beta}(t)
=
\beta\ell(t)+(\alpha-\beta)r_q(t),
\qquad
\ell(t)=-\log(1-t^2),
\]
and $r_q(t)
=
\ell(t)-\nlsum_{k=1}^{q-1}\frac{t^{2k}}{k}.$
Hence
\begin{equation}
v_{q,\alpha,\beta}(t)
=
-\alpha\log(1-t^2)
-
(\alpha-\beta)
\nlsum_{k=1}^{q-1}\frac{t^{2k}}{k}.
\label{eq:potential-boundary-expansion}
\end{equation}
The finite sum in \eqref{eq:potential-boundary-expansion} remains
bounded for $|t|<1$, and consequently, there is a constant
$C_{\alpha,\beta,q}<\infty$ such that
\begin{equation}
e^{-v_{q,\alpha,\beta}(t)}
\le
C_{\alpha,\beta,q}(1-t^2)^\alpha,
\qquad |t|<1.
\label{eq:scalar-potential-boundary-decay}
\end{equation}

Let $\lambda$ be an eigenvalue of $B(g)$ for which the minimum in
\eqref{eq:spectral-boundary-distance} is attained,  so that, $1+\rho\lambda=\delta(g)$ for some
$\rho\in\{\pm1\}$. 
Since $1-\lambda^2
=
(1+\rho\lambda)(1-\rho\lambda)
\le
2\delta(g),$
\eqref{eq:scalar-potential-boundary-decay} gives $e^{-v_{q,\alpha,\beta}(\lambda)}
\le
C_{\alpha,\beta,q}\delta(g)^\alpha.$
Moreover, as $v_{q,\alpha,\beta}\ge0$, we see that $e^{-v_{q,\alpha,\beta}(\lambda_j)}$ is at most one for all the remaining eigenvalues $\lambda_j$, and therefore we obtain,
\begin{equation}
e^{-\mathcal V(B(g))}
\le
C_{\alpha,\beta,q}\delta(g)^\alpha.
\label{eq:matrix-potential-boundary-decay}
\end{equation}

On the other hand, by the definition of $\delta(g)$ we have $H_\rho(g)\succeq\delta(g)I_d$.

It follows that $\norm{H_\rho^{-1}}_{\text{op}}
\le
\delta(g)^{-1},$
and hence
\begin{equation}
\norm{\mathcal K_{\sigma\tau}(g)}_{\text{op}}
=
\norm{H_\tau^{-1}}_{\text{op}}
\norm{H_\sigma^{-1}}_{\text{op}}
\le
\delta(g)^{-2}.
\label{eq:two-slot-boundary-growth}
\end{equation}
Combining \eqref{eq:matrix-potential-boundary-decay} and
\eqref{eq:two-slot-boundary-growth}, we obtain
\begin{equation}
\norm{W_{\sigma\tau}(g)}
\le
C_{\alpha,\beta,q}
e^{-\norm{g}^2/2}
\delta(g)^{\alpha-2}.
\label{eq:two-slot-weight-boundary-decay}
\end{equation}

In particular, since $\alpha\ge A_*=32>4$, we have
\begin{equation}
\norm{W_{\sigma\tau}(g)}_{\text{op}}
\le
C_{\alpha,\beta,q}e^{-\norm{g}^2/2}
\qquad\text{on }\Omega,
\label{eq:two-slot-weight-gaussian-bound}
\end{equation}
so that $\int_\Omega W_{\sigma\tau}\,dg<\infty$; the same bound holds for
$W_\rho$, since $\norm{H_\rho^{-1}}_{\text{op}}\le\delta(g)^{-1}$.
Moreover, for every polynomial field $F$,
\eqref{eq:two-slot-weight-gaussian-bound} gives
$\int_\Omega F^\top W_{\sigma\tau}F\,dg\le C_{\alpha,\beta,q}\int e^{-\norm{g}^2/2}\norm{F}_2^2\,dg<\infty$,
and likewise for $W_\rho$; in particular, $\mathcal Q_{\sigma\tau}(F)$
and $\mathcal Q_{\sigma\tau}(\partial_iF)$ are finite for every polynomial
field $F$. Finally, by Proposition~\ref{prop:two-slot-curvature}, the
curvature condition \eqref{eq:abstract-curvature-gap} holds for
$W_{\sigma\tau}$ and for $W_\rho$ with $\lambda=\frac12$. Consequently,
Lemma~\ref{lem:matrix-weighted-curvature} applies to both weights and to
every polynomial test field; since $d\gamma_n$ is a constant multiple of
$e^{-\norm{g}^2/2}dg$, all Poincar\'e inequalities used below are
instances of \eqref{eq:abstract-matrix-poincare}.

Applying Lemma~\ref{lem:matrix-weighted-curvature} to the weight $W_{\sigma\tau}$ (with $\lambda=\frac12$) we obtain 

\begin{equation}
\mathcal Q_{\sigma\tau}(F)
-
m^\top S^{-1}m
\le
C\nlsum_{i=1}^n
\mathcal Q_{\sigma\tau}(\partial_iF),
\label{eq:two-slot-gap-with-projection}
\end{equation}
where
\begin{equation}
S=\E_\mu\mathcal K_{\sigma\tau},
\qquad
m=\E_\mu[\mathcal K_{\sigma\tau} f].
\label{eq:S-m-def}
\end{equation}

In the following it will be convenient to use the abbreviations 

\[
\mathcal A=\mathcal A_{\sigma\tau},
\qquad
\mathcal K=\mathcal K_{\sigma\tau}.
\]

In the remainder, our aim will be now to estimate the weighted quadratic form
$m^\top S^{-1}m$ appearing in \eqref{eq:two-slot-gap-with-projection}. 

\medskip
\noindent

Let $S_\rho
=
\E_\mu H_\rho^{-1}.$ Since $\mathcal V$ is even, also $\mu$ is even, and using that $B(-g)=-B(g)$, we get that $\E_\mu B=0$ and hence $\E_\mu H_\rho=I_d.$
The map $H\mapsto H^{-1}$ is operator convex on the positive-definite
cone, and therefore Jensen's inequality gives
\begin{equation}
S_\rho
=
\E_\mu H_\rho^{-1}
\succeq
(\E_\mu H_\rho)^{-1}
=
I_d.
\label{eq:S-rho-lower}
\end{equation}
 
Fix $z\in\R^d$ and consider
the vector field
\[
\Phi_{\rho,z}(g)
=
(H_\rho(g)-I_d)z.
\]
By Proposition~\ref{prop:two-slot-curvature} the matrix weight $W_\rho$ has normalized curvature
bounded by $\frac{1}{2}I$, and therefore applying Lemma~\ref{lem:matrix-weighted-curvature} to the weight $W_\rho$ gives,
\begin{align}
&\E_\mu
[
\Phi_{\rho,z}^\top
H_\rho^{-1}
\Phi_{\rho,z}
]
-
(
\E_\mu[H_\rho^{-1}\Phi_{\rho,z}]
)^\top
S_\rho^{-1}
(
\E_\mu[H_\rho^{-1}\Phi_{\rho,z}]
)
\nonumber\\
&\hspace{35mm}
\le
C\nlsum_{i=1}^n
\E_\mu
[
(\partial_i\Phi_{\rho,z})^\top
H_\rho^{-1}
(\partial_i\Phi_{\rho,z})
].
\label{eq:one-slot-Poincare-test}
\end{align}
Now, since $H_\rho^{-1}(H_\rho-I_d)
=
I_d-H_\rho^{-1}, $ we get that $\E_\mu[H_\rho^{-1}\Phi_{\rho,z}]
=
(I_d-S_\rho)z.$
Moreover, using $\E_\mu H_\rho=I_d$ we obtain,
\begin{align*}
\E_\mu
[
\Phi_{\rho,z}^\top
H_\rho^{-1}
\Phi_{\rho,z}
]
&=
z^\top
\E_\mu
[
(H_\rho-I_d)H_\rho^{-1}(H_\rho-I_d)
]
z\\
&=
z^\top(S_\rho-I_d)z.
\end{align*}
  Consequently, the left-hand side
of \eqref{eq:one-slot-Poincare-test} is
\[
z^\top
[
S_\rho-I_d
-
(S_\rho-I_d)S_\rho^{-1}(S_\rho-I_d)
]z
=
z^\top(I_d-S_\rho^{-1})z.
\]
Finally, using $\partial_i\Phi_{\rho,z}
=
\rho M_i z,$
 the right-hand side of \eqref{eq:one-slot-Poincare-test} is given by $Cz^\top
\bigl(
\nlsum_{i=1}^nM_iS_\rho M_i
\bigr)z.$
Since $z$ was arbitrary, we therefore obtain
\begin{equation}
I_d-S_\rho^{-1}
\preceq
C\nlsum_{i=1}^nM_iS_\rho M_i.
\label{eq:one-slot-S-master}
\end{equation}

Let $L_\rho=\opnorm{S_\rho}$. Now, using $\nlsum_iM_i^2\preceq\eta I_d$ and \eqref{eq:one-slot-S-master}, we obtain,
\begin{equation*}
1-\frac1{L_\rho}
\le
C\eta L_\rho.
\end{equation*}

Equivalently, we have that $C\eta L_\rho^2-L_\rho+1\ge0.$
Since $q\ge2$ and $q^2\eta\le c_{\mathrm{P}}$, we have $\eta\le\frac{c_{\mathrm{P}}}{4}.$
By fixing the universal threshold $c_{\mathrm{P}}$ sufficiently
small, we may therefore assume that $4C\eta<1$, so that the quadratic
has two distinct positive roots.  We rule
out the large branch by a continuity argument.

For $s\in[0,1]$, set
\[
B_s=sB,
\qquad
H_{\rho,s}=I_d+\rho B_s,
\]
let $\mu_s$ denote the corresponding tilted measure, and define
\[
S_{\rho,s}
=
\E_{\mu_s}H_{\rho,s}^{-1},
\qquad
L_\rho(s)=\opnorm{S_{\rho,s}}.
\]
Since the coefficient matrices of $B_s$ are $sM_i$, we have that $\nlsum_i(sM_i)^2\preceq s^2\eta I_d,$
and the preceding argument gives $1-\frac1{L_\rho(s)}
\le
Cs^2\eta L_\rho(s).$
Thus, for $s>0$,
\[
L_\rho(s)\le\lambda_-(s)
\qquad\text{or}\qquad
L_\rho(s)\ge\lambda_+(s),
\]
where $\lambda_\pm(s)
=
\frac{1\pm\sqrt{1-4Cs^2\eta}}
{2Cs^2\eta}.$
Moreover,
\[
L_\rho(0)=1,
\qquad
\lambda_-(s)\to1,
\qquad
\lambda_+(s)\to\infty
\quad\text{as }s\downarrow0.
\]
The map $s\mapsto L_\rho(s)$ is continuous. Indeed, $S_{\rho,s}$ is the
ratio of $\E_{\gamma_n}[e^{-\mathcal V(sB)}H_{\rho,s}^{-1}]$ and
$\E_{\gamma_n}[e^{-\mathcal V(sB)}]$, with both integrands extended by
zero outside $\{\opnorm{sB}<1\}$. By \eqref{eq:matrix-potential-boundary-decay}
(applied to $sB$) and the bound $\norm{H_{\rho,s}^{-1}}_{\text{op}}\le(1-\opnorm{sB})^{-1}$,
the integrand of the numerator is bounded by
\[
C_{\alpha,\beta,q}\bigl(1-\opnorm{sB(g)}\bigr)^{\alpha-1}\le C_{\alpha,\beta,q},
\]
and, for fixed $g$, it is continuous in $s$ (it tends to zero as
$\opnorm{sB(g)}\uparrow1$, since $\alpha>1$); the denominator is likewise
bounded, continuous in $s$, and positive. Hence dominated convergence
yields the continuity of $s\mapsto S_{\rho,s}$, and thus of $L_\rho(s)$.
Therefore a path starting on the lower branch cannot reach the upper branch without
passing through the forbidden interval
$(\lambda_-(s),\lambda_+(s))$.  Hence $L_\rho(s)\le\lambda_-(s)$ for all $s\in[0,1]$.
At $s=1$ this gives $L_\rho
\le
\frac{2}{1+\sqrt{1-4C\eta}}
\le
1+C'\eta$, and therefore,
\[
I_d\preceq S_\rho\preceq(1+C'\eta)I_d.
\]
Now, let $T
:=
\E_\mu\mathcal A
=
\E_\mu[H_\tau\otimes H_\sigma]$. Using $\E_\mu B=0$ gives
\begin{equation}
T
=
I_{d^2}
+
\sigma\tau\,\E_\mu[B\otimes B].
\label{eq:T-expansion}
\end{equation}
We claim that the second term  in \eqref{eq:T-expansion} has operator norm at most $\eta$.
To this end, factor $\Cov_\mu(g)=UU^\top$
and define $N_k :=\nlsum_{i=1}^nU_{ik}M_i.$ Using that $\mu$ is $1$-strongly log-concave (since $\mathcal{V}$ is convex)  we have by Brascamp--Lieb $\Cov_\mu(g)\preceq I_n$, and therefore we obtain for every $x\in\R^d$, $x^\top
\bigl(\nlsum_kN_k^2\bigr)x = \nlsum_k\norm{N_kx}^2 \le
\nlsum_i\norm{M_ix}^2$, or equivalently $\nlsum_kN_k^2
\preceq
\nlsum_iM_i^2$. Moreover, using $\E_\mu[B\otimes B]
= \nlsum_kN_k\otimes N_k$, the operator Cauchy--Schwarz inequality  yields $\norm[\big]{
\nlsum_kN_k\otimes N_k
}
\le
\norm[\big]{\nlsum_kN_k^2}
\le \norm[\big]{\nlsum_iM_i^2}\le
\eta.$
Hence, we obtain, 
\begin{equation}
\norm{T-I_{d^2}}
\le
\eta.
\label{eq:T-close}
\end{equation}

We next show that $S=\E_\mu\mathcal K$ is also close to the identity.
To this end, fix $z\in\R^{d^2}$ and apply once more the matrix weighted Poincar\'e inequality 
of Lemma \ref{lem:matrix-weighted-curvature} with respect to the weight $W_{\sigma\tau}$ to the vector field $\Psi_z(g)
=
(\mathcal A(g)-I_{d^2})z$,
\begin{equation}
\mathcal Q_{\sigma\tau}(\Psi_z)
-
\E_\mu[\mathcal K \Psi_z]^\top S^{-1}\E_\mu[\mathcal K \Psi_z]
\le
C\nlsum_{i=1}^n
\mathcal Q_{\sigma\tau}(\partial_i\Psi_z),
\label{eq:psi-poincare}
\end{equation}
Since $\mathcal K=\mathcal A^{-1}$, we get $\mathcal K(\mathcal A-I_{d^2})
=
I_{d^2}-\mathcal K,$ and 
therefore $\E_\mu[\mathcal K\Psi_z]
=
(I_{d^2}-S)z.$
Furthermore,
\begin{align*}
\E_\mu
[
\Psi_z^\top\mathcal K\Psi_z
]
&=
z^\top
\E_\mu
[
(\mathcal A-I)
\mathcal A^{-1}
(\mathcal A-I)
]z\\
&=
z^\top(T-2I_{d^2}+S)z.
\end{align*}
Since, $\E_\mu[\mathcal K \Psi_z]^\top S^{-1}\E_\mu[\mathcal K \Psi_z] = z^\top
(S-I_{d^2})S^{-1}(S-I_{d^2})z,$ the left-hand side of \eqref{eq:psi-poincare} 
simplifies to $z^\top(T-S^{-1})z.$ 
It remains to estimate the corresponding Dirichlet term in \eqref{eq:psi-poincare}.  Using, $\partial_i\mathcal A
=
\tau M_i\otimes H_\sigma
+
H_\tau\otimes\sigma M_i$ and  $(U+V)\mathcal K(U+V)
\preceq
2U\mathcal KU+2V\mathcal KV$,
we obtain
\[
(\partial_i\mathcal A)
\mathcal K
(\partial_i\mathcal A)
\preceq{} 2(
M_iH_\tau^{-1}M_i
)\otimes H_\sigma+
2H_\tau\otimes
(
M_iH_\sigma^{-1}M_i
).
\]
Therefore, since  $H_\rho\preceq2I_d$ on $\Omega$ we have,
\begin{align}
\nlsum_i
\E_\mu
[
(\partial_i\mathcal A)
\mathcal K
(\partial_i\mathcal A)
]
&\preceq
C
\Bigl(
\nlsum_iM_iS_\tau M_i
\Bigr)\otimes I_d +
CI_d\otimes
\Bigl(
\nlsum_iM_iS_\sigma M_i
\Bigr)
\nonumber\\
&\preceq
C\eta I_{d^2}.
\label{eq:A-derivative-bound}
\end{align}
Finally, we conclude from  \eqref{eq:psi-poincare} and \eqref{eq:A-derivative-bound} that
\begin{equation}
0
\preceq
T-S^{-1}
\preceq
C\eta I_{d^2}.
\label{eq:T-Sinverse}
\end{equation}
Now, combining \eqref{eq:T-close} and \eqref{eq:T-Sinverse}, we  obtain
\begin{equation*}
\norm{S^{-1}-I_{d^2}}_{\text{op}}
\le
\norm{S^{-1}-T}_{\text{op}}
+
\norm{T-I_{d^2}}_{\text{op}}
\le
(C+1)\eta.
\end{equation*}
Set $\varepsilon=(C+1)\eta$.  Since $q\ge2$ and
$q^2\eta\le c_{\mathrm{P}}$, by choosing the universal threshold
$c_{\mathrm{P}}$ sufficiently small we may assume that
$\varepsilon\le1/2$.  Hence $(1-\varepsilon)I_{d^2}
\preceq
S^{-1}
\preceq
(1+\varepsilon)I_{d^2},$
and therefore
\[
\frac{1}{1+\varepsilon}I_{d^2}
\preceq
S
\preceq
\frac{1}{1-\varepsilon}I_{d^2},
\]
from which we obtain,
\begin{equation}
\norm{S-I_{d^2}}_{\text{op}}
\le
\frac{\varepsilon}{1-\varepsilon}
\le
C'\eta
\label{eq:S-two-close}
\end{equation}
for a universal constant $C'>0$.

We will now use \eqref{eq:S-two-close} to control 
$m^\top S^{-1}m$.  First, note that
\[
(\mathcal K-I_{d^2})
\mathcal K^{-1}
(\mathcal K-I_{d^2}) = \mathcal K-2I_{d^2}+\mathcal K^{-1}= \mathcal K-2I_{d^2}+\mathcal A.
\]
Then, using \eqref{eq:T-close} and
\eqref{eq:S-two-close} we obtain, 
\begin{equation}
\E_\mu
[
(\mathcal K-I_{d^2})
\mathcal K^{-1}
(\mathcal K-I_{d^2})
]
=
S-2I_{d^2}+T
\preceq
C\eta I_{d^2},
\label{eq:D-projection}
\end{equation}
Write
\[
m
=
\E_\mu f
+
r,
\qquad
r
=
\E_\mu[(\mathcal K-I_{d^2})f].
\]
For any $z\in\R^{d^2}$,  Cauchy--Schwarz then gives
\begin{align}
|z^\top r|^2
&=
\bigl|
\E_\mu
[
z^\top(\mathcal K-I_{d^2})f
]
\bigr|^2
\nonumber\\
&\le
\E_\mu[
z^\top (\mathcal K-I_{d^2})
\mathcal K^{-1}
(\mathcal K-I_{d^2})z
]
\E_\mu[f^\top\mathcal K f]
\nonumber\\
&\le
C\eta\norm{z}^2
\mathcal Q_{\sigma\tau}(F),
\label{eq:weighted-mean-CS}
\end{align}
where we used \eqref{eq:D-projection} in  the last step.
In particular, since  $z$ was arbitrary this shows $\norm{r}^2
\le
C\eta\mathcal Q_{\sigma\tau}(F).$ Using again Cauchy-Schwarz we moreover have, $\norm{\E_\mu f}^2
\le
\E_\mu\norm{f}^2
=
\E_\mu\hsnorm{F}^2$.
Now, since  \eqref{eq:S-two-close} implies $S^{-1}\preceq CI_{d^2}$, we conclude that

\begin{align}
m^\top S^{-1}m
&\le
C\norm{m}^2
\nonumber\\
&\le
C\norm{\E_\mu f}^2
+
C\norm r^2
\nonumber\\
&\le
C\E_\mu\hsnorm{F}^2
+
C\eta\mathcal Q_{\sigma\tau}(F).
\label{eq:projection-final}
\end{align}

Finally, inserting \eqref{eq:projection-final} into
\eqref{eq:two-slot-gap-with-projection} yields
\[
\mathcal Q_{\sigma\tau}(F)
\le
C\E_\mu\hsnorm{F}^2
+
C\nlsum_i\mathcal Q_{\sigma\tau}(\partial_iF)
+
C\eta\mathcal Q_{\sigma\tau}(F).
\]
Finally, since $q\ge2$ and $q^2\eta\le c_{\mathrm{P}}$, we have $C\eta
\le
\frac{C c_{\mathrm{P}}}{4}.$
We choose the universal threshold $c_{\mathrm{P}}$ sufficiently
small so that $\frac{C c_{\mathrm{P}}}{4}\le\frac12$, which completes the proof.

\subsection{Proof of Lemma \ref{lem:polynomial-derivatives}}

Let $h_1,\dots,h_j$ be independent standard Gaussian vectors in
$\R^n$, which are also independent  of $g$.  Then, we have 
\[
\norm{\nabla^jP(g)}_{\mathrm{HS}}^2
=
\E_h
\hsnorm{
D^jP(g)[h_1,\dots,h_j].
}^2.
\]

Since $P=B^{q-1}Y$ is a product of $q$ linear matrix series, repeated
application of the product rule shows that $D^jP(g)[h_1,\dots,h_j]$
is a sum of at most $q^j$ nonzero terms in which each term is a product $Z_1\cdots Z_q$
of exactly $q$ linear matrix series. Each of the matrix series appearing in these products is either a series evaluated at $g$, such as $B(g)$ or $Y(g)$, or a matrix series evaluated at
one of the independent Gaussian directions, such as $B(h_r)$ or
$Y(h_r)$.

Consider any such a product,  applying H\"older inequality we obtain, 
\begin{equation}
\E_{\mu,h}\hsnorm{Z_1\cdots Z_q}^2
\le
\nlprod_{\ell=1}^q
(
\E_{\mu,h}\Tr|Z_\ell|^{2q}
)^{1/q}.
\label{eq:probability-holder}
\end{equation}

If $Z_\ell$ is  a Gaussian series depending on $h_r$, then by
Lemma~\ref{lem:gaussian-trace-moment} we get that 
\begin{equation}
\E_h\Tr|Z_\ell|^{2q}
\le
d(Cq\eta)^q.
\label{eq:direction-factor-moment}
\end{equation}

It remains to treat those $Z_l$ depending on $g$, whose law is the tilted
measure $\mu$ rather than the original Gaussian measure.  Recall that
$\mu$ is an even convex tilt of $\gamma_n$.  Moreover, for either of
the linear matrix series $Z=B$ or $Z=Y$, the function
\[
g\longmapsto\Tr|Z(g)|^{2q}
\]
is even and convex.  Harg\'e's convex domination inequality \cite{Harge2004} therefore
gives
\[
\E_\mu\Tr|Z(g)|^{2q}
\le
\E_{\gamma_n}\Tr|Z(g)|^{2q}.
\]
Applying now Lemma~\ref{lem:gaussian-trace-moment} then  yields
\begin{equation}
\E_\mu\Tr|Z(g)|^{2q}
\le
d(Cq\eta)^q.
\label{eq:tilted-factor-moment}
\end{equation}

Combining \eqref{eq:probability-holder},
\eqref{eq:direction-factor-moment}, and
\eqref{eq:tilted-factor-moment}, we obtain for every product arising
in $D^jP$,
\begin{equation}
\E_{\mu,h}\hsnorm{Z_1\cdots Z_q}^2
\le
d(Cq\eta)^q.
\label{eq:single-product-moment}
\end{equation}

Finally, since $D^jP$ contains at most $q^j$ such products,  using \eqref{eq:single-product-moment} and the elementary matrix inequality
\[
\hsnorm[\Big]{\nlsum_{r=1}^m X_r}^2
\le
m^2\max_{1\le r\le m}\hsnorm{X_r}^2
\]
with $m\le q^j$, we  then obtain

\[
\E_\mu\norm{\nabla^jP}_{\mathrm{HS}}^2
\le
q^{2j}d(Cq\eta)^q, 
\]
which proves \eqref{eq:polynomial-derivative-bound}.

\subsection{Proof of Lemma \ref{lem:regularity}}


For $0\le t\le1$, write $w_t(g)
=
e^{-\mathcal V(B_t(g))},$
with the convention $e^{-\infty}=0$. Thus $Z_t=\E_{\gamma_n}w_t.$
We first establish pointwise bounds on $w_t$ and its first two
derivatives which are uniform up to the spectral boundary.

Recall that, for $|s|<1$,
\[
v_{q,\alpha,\beta}(s)
=
\alpha\ell(s)
-
(\alpha-\beta)
\nlsum_{k=1}^{q-1}\frac{s^{2k}}{k},
\qquad
\ell(s)=-\log(1-s^2).
\]
Hence
\begin{equation}
e^{-v_{q,\alpha,\beta}(s)}
=
(1-s^2)^\alpha
\exp\biggl(
(\alpha-\beta)
\nlsum_{k=1}^{q-1}\frac{s^{2k}}{k}
\biggr).
\label{eq:scalar-boundary-weight}
\end{equation}
Since the finite sum in \eqref{eq:scalar-boundary-weight} is bounded
on $(-1,1)$, there exists a finite constant
$C_{\alpha,\beta,q}>0$ such that
\begin{equation}
e^{-v_{q,\alpha,\beta}(s)}
\le
C_{\alpha,\beta,q}(1-s^2)^\alpha,
\qquad |s|<1.
\label{eq:scalar-boundary-weight-upper}
\end{equation}

Fix $g\in\R^n$ and $t$ such that $\opnorm{B_t(g)}<1$, and set
\[
\delta_t(g)
=
1-\opnorm{B_t(g)}.
\]
Choose an eigenvalue $\lambda$ of $B_t(g)$ satisfying
$|\lambda|=\opnorm{B_t(g)}$. Since
\[
1-\lambda^2
=
(1-|\lambda|)(1+|\lambda|)
\le
2\delta_t(g),
\]
while $v_{q,\alpha,\beta}\ge0$, we obtain from
\eqref{eq:scalar-boundary-weight-upper} that 
\begin{equation}
w_t(g)
\le
C_{\alpha,\beta,q,d}\,
\delta_t(g)^\alpha.
\label{eq:matrix-boundary-weight-upper}
\end{equation}

We next bound derivatives of the potential. For $|s|<1$,
\[
\ell'(s)=\frac{2s}{1-s^2},
\qquad
r_q'(s)=\frac{2s^{2q-1}}{1-s^2}.
\]
If $s$ belongs to the spectrum of $B_t(g)$, then
$1-|s|\ge\delta_t(g)$, and hence $|\ell'(s)|+|r_q'(s)|
\le
\frac{4}{\delta_t(g)}.$
Consequently,
\begin{equation}
|
D\mathcal V(B_t)[Y]
|
=
\bigl|
\Tr\!(
v_{q,\alpha,\beta}'(B_t)Y
)
\bigr|
\le
C_{\alpha,d}\,
\delta_t(g)^{-1}
\norm{Y(g)}_F.
\label{eq:first-potential-derivative-bound}
\end{equation}

Similarly,
\[
\ell''(s)
=
\frac{2(1+s^2)}{(1-s^2)^2},
\]
and, using the explicit expression for $r_q''$,
\[
|r_q''(s)|
\le
\frac{Cq}{(1-|s|)^2}.
\]
Thus $|v_{q,\alpha,\beta}''(s)|
\le
C_{\alpha,q}\delta_t(g)^{-2}$
for every $s$ in the convex hull of the spectrum of $B_t(g)$.
Therefore, we obtain using 
Lemma~\ref{lem:spectral-hessian} that
\begin{equation}
|
D^2\mathcal V(B_t)[Y,Y]
|
\le
C_{\alpha,q}\,
\delta_t(g)^{-2}
\norm{Y(g)}_F^2.
\label{eq:second-potential-derivative-bound}
\end{equation}

On the open set where $\opnorm{B_t(g)}<1$, we obtain from  chain rule,
\begin{equation}
\partial_t w_t(g)
=
-
D\mathcal V(B_t)[Y]\,
w_t(g)
\label{eq:pointwise-weight-first}
\end{equation}
and
\begin{equation}
\partial_t^2 w_t(g)
=
(
D\mathcal V(B_t)[Y]^2
-
D^2\mathcal V(B_t)[Y,Y]
)
w_t(g).
\label{eq:pointwise-weight-second}
\end{equation}
Combining
\eqref{eq:matrix-boundary-weight-upper},
\eqref{eq:first-potential-derivative-bound}, and
\eqref{eq:second-potential-derivative-bound}, we obtain
\begin{align}
|\partial_t w_t(g)|
&\le
C
\norm{Y(g)}_F
\delta_t(g)^{\alpha-1},
\label{eq:weight-first-bound}\\
|\partial_t^2 w_t(g)|
&\le
C
\norm{Y(g)}_F^2
\delta_t(g)^{\alpha-2}.
\label{eq:weight-second-bound}
\end{align}

For each fixed $g$, the set
\[
I_g
=
\bigl\{
t\in[0,1]:
\opnorm{B_t(g)}<1
\bigr\}
\]
is an interval, because the operator-norm unit ball is convex and
$t\mapsto B_t(g)$ is affine. At every boundary point of $I_g$ we have
$\delta_t(g)\downarrow0$. Since $\alpha>2$,
\eqref{eq:matrix-boundary-weight-upper},
\eqref{eq:weight-first-bound}, and
\eqref{eq:weight-second-bound} show that $w_t(g)$,  $\partial_tw_t(g),$ and $\partial_t^2w_t(g)$
all tend to zero as $t$ approaches such a boundary point from within
$I_g$. Hence the extension $w_t(g)=0$ outside $I_g$ is a $C^2$
function of $t$ for every fixed $g$.

It remains to justify differentiation under the Gaussian integral.
Since $Y(g)$ is linear in $g$, there is a finite constant
$C_M>0$, depending only on the coefficient matrices, such that $\norm{Y(g)}_F
\le
C_M\norm{g}_2.$
Since $0<\delta_t(g)\le1$ whenever $\opnorm{B_t(g)}<1$,
\eqref{eq:weight-first-bound} and
\eqref{eq:weight-second-bound} imply, uniformly in $t\in[0,1]$,
\[
|\partial_tw_t(g)|
\le
C\norm{g}_2,
\qquad
|\partial_t^2w_t(g)|
\le
C\norm{g}_2^2.
\]
Both dominating functions are integrable with respect to
$\gamma_n$. Dominated convergence therefore allows us to
differentiate under the expectation twice and yields

\[
Z_t'
=
-
\E_{\gamma_n}
[
D\mathcal V(B_t)[Y]
e^{-\mathcal V(B_t)}
],
\]
and
\[
Z_t''
=
\E_{\gamma_n}
\bigl[
(
D\mathcal V(B_t)[Y]^2
-
D^2\mathcal V(B_t)[Y,Y]
)
e^{-\mathcal V(B_t)}
\bigr].
\]
The same argument, together
with the pointwise continuity in $t$, shows that $Z'$ and $Z''$ are
continuous. Thus $Z\in C^2([0,1])$.

Finally, $Z_t>0$ for every $t$, since $B_t(0)=0$ and hence
$w_t$ is strictly positive on a neighborhood of $g=0$. Therefore
$F=\log Z$ also belongs to $C^2([0,1])$. Moreover, we have that
\[
F'=\frac{Z'}{Z},
\qquad
F''=\frac{Z''}{Z}-\Bigl(\frac{Z'}{Z}\Bigr)^2,
\]
which concludes the proof.

\end{document}